\newcommand{\oN}{{\mathbb N}}
\newcommand{\oR}{{\mathbb R}}
\newcommand{\oZ}{{\mathbb Z}}
\newcommand{\ignore}[1]{}
\newtheorem{theorem}{Theorem}[section]
\newtheorem{corollary}[theorem]{Corollary}
\newtheorem{lemma}[theorem]{Lemma}
\newtheorem{conjecture}{Conjecture}
\newtheorem{question}[theorem]{Question}
\newtheorem{proposition}[theorem]{Proposition}
\newtheorem{remark}[theorem]{Remark}
\newtheorem{example}[theorem]{Example}
\newcommand{\kr}{\mathcal{K}^{(r)}}
\newcommand{\Cr}{\mathcal{C}^{(r)}}
\newcommand{\C}{\mathcal{C}}
\newcommand{\quadr}{\mathcal{M}}
\newcommand{\fr}{f^{(r)}}
\newcommand{\lsim}{f}
\newcommand{\lsph}{F}
        \def\tagform@#1{\maketag@@@{$($#1$)$\@@italiccorr}}
    \numberwithin{equation}{section}
\renewcommand*\env@matrix[1][*\c@MaxMatrixCols c]{%
  \hskip -\arraycolsep
  \let\@ifnextchar\new@ifnextchar
  \array{#1}}
\newcommand{\supp}{\text{\rm Supp}}
\newcommand{\COP}{{\text{\rm COP}}}
\newcommand{\MK}{{\mathcal K}}
\newcommand{\MS}{{\mathcal S}}
\newcommand{\MC}{{\mathcal C}}
\newcommand{\MM}{{\mathcal M}}
\newcommand{\MT}{{\mathcal T}}
\newcommand{\rrank}{\vartheta\text{\rm -rank}}
\newcommand{\las}{\text{\rm las}}
\newcommand{\MoL}{\textcolor{black}}
\title{Finite convergence of sum-of-squares hierarchies for the stability number of a graph}
\author {Monique Laurent
\thanks{Centrum Wiskunde \& Informatica (CWI), Amsterdam, and Tilburg University. \url{monique.laurent@cwi.nl} }
\And 
Luis Felipe Vargas
\thanks{Centrum Wiskunde \& Informatica (CWI), Amsterdam. \url{luis.vargas@cwi.nl}
\newline
This work is supported by the European Union's Framework Programme for Research and Innovation Horizon
2020 under the Marie Skłodowska-Curie Actions Grant Agreement No. 813211  (POEMA).
} 
}
\date{Published electronically April 25, 2022}
\begin{document}

\maketitle

\begin{abstract}
We investigate a hierarchy of semidefinite bounds $\vartheta^{(r)}(G)$ for the stability number $\alpha(G)$ of a graph $G$, based on its copositive programming formulation and  introduced by 
 de Klerk and Pasechnik [{\em SIAM J. Optim.} 12 (2002), pp.875--892], who conjectured  convergence to $\alpha(G)$ in $r=\alpha(G)-1$ steps. Even the weaker conjecture claiming finite convergence is still open.  We establish links between this hierarchy and sum-of-squares hierarchies based on the Motzkin-Straus formulation of $\alpha(G)$, which we use to show finite convergence when $G$ is acritical, i.e., when $\alpha(G\setminus e)=\alpha(G)$ for all edges $e$ of $G$. 
This relies, in particular, on  understanding the structure of the  minimizers of the Motzkin-Straus formulation and showing that their number is finite precisely when $G$ is acritical. 
\MoL{Moreover we show that these results hold in the general setting of the weighted stable set problem for graphs equipped with positive node weights.} In addition, as a byproduct we show that deciding whether a standard quadratic program has finitely many minimizers does not admit a polynomial-time algorithm unless P=NP.
\end{abstract}

\begin{keywords}
{} Stable set problem, $\alpha$-critical graph, polynomial optimization, Lasserre hierarchy, sum-of-squares polynomial, finite convergence, copositive programming, standard quadratic programming,  semidefinite programming, Motzkin-Straus formulation. 
\end{keywords}


\section{Introduction}\label{sec-intro}

Given a graph $G=(V,E)$, its  {\em stability number} $\alpha(G)$   is defined as the largest cardinality of a stable set in $G$. Computing the stability number of a graph is a central problem in combinatorial optimization, well-known to be  NP-hard \cite{Karp}. \MoL{This problem (and the related one of finding a maximum cardinality clique in $G$, i.e., a maximum cardinality stable set in the complementary graph $\overline G$) has many applications in various areas, such as scheduling, social networks analysis, bioinformatics (see, e.g., \cite{Bomze-survey}, \cite{WuHao} and further references therein).}
Many approaches based, in particular,  on  semidefinite programming have been developed for constructing good relaxations. 
A starting point to define hierarchies of approximations for the stability number is the following formulation by Motzkin and Straus \cite{motzkin}, which expresses $\alpha(G)$ via quadratic optimization over the standard simplex $\Delta_n$:
\begin{align}\label{motzkin-form}\tag{M-S}
    \frac{1}{\alpha(G)}=\min \{x^T(A_G+I)x: x\in\Delta_n\}.
\end{align}
Throughout,  $\Delta_n=\{x\in \mathbb{R}^n \text{ : } x\geq0, \sum_{i=1}^{n}x_i=1\}$,  $A_G$ is the adjacency matrix of $G$, \MoL{$I$ is the identity matrix and $J$ is the all-ones matrix.}
Based on (\ref{motzkin-form}),  de Klerk and Pasechnik \cite{dKP2002} proposed the following reformulation:
\begin{align}
    \alpha(G)=  \min  \{t: x^T(t(I+A_G)-J)x \geq 0 \hspace{0.2cm} \text{ for all } x\in \mathbb{R}^n_+\},
\end{align}
which boils down to linear optimization over the copositive cone
$$
\COP_n:=\{M\in \mathcal S^n: x^TMx\ge 0 \ \forall x\in \oR^n_+\}.$$
Indeed, $\alpha(G)$  equals the smallest scalar $t$ for which  the matrix $M_{G,t}:= t(I+A_G)-J$ is copositive, i.e., belongs to $\COP_n$.
For $x\in \oR^n$ set $x^{\circ 2}:=(x_1^2,\ldots,x_n^2)$ and for a matrix $M\in\MS^n$ define the polynomials 
\begin{equation}\label{eqpPM}
p_M(x)=x^TMx \quad \text{ and } \quad P_M(x)=p_M(x^{\circ 2})=(x^{\circ2})^TMx^{\circ2}.
\end{equation}
Then   $M$  is copositive precisely when the polynomial $p_M$ is nonnegative over $\oR^n_+$ or, equivalently, when $P_M$ is nonnegative over $\oR^n$.  Based on this observation, 
 Parrilo \cite{Parrilo-thesis-2000} introduced the  following two subcones  of $\mathcal S^n$:
 \begin{align} \label{eqCKnr}
 \MC^{(r)}_n=\Big\{M: \Big(\sum_{i=1}^nx_i\Big)^r p_M(x)\in \mathbb{R}_+[x]\Big\}, \quad
 \MK^{(r)}_n=\Big\{M: \Big(\sum_{i=1}^nx_i^2\Big)^r P_M(x) \in\Sigma\Big\},
 \end{align}
which provide sufficient conditions for matrix copositivity:     $\C^{(r)}_n\subseteq \MK^{(r)}_n\subseteq \COP_n$  for any $r\ge 0$. 
Here $\oR_+[x]$ is the set of polynomials with nonnegative coefficients and $\Sigma$ denotes the set of sum-of-squares polynomials.
 De Klerk and Pasechnik \cite{dKP2002} used these two cones to define the following parameters: 
\begin{align} 
\zeta^{(r)}(G)= \min \{ t:  t(I+A_G)-J\in \MC^{(r)}_n\}, \quad\label{eqzetar}\\
\vartheta^{(r)}(G)= \min \{ t:  t(I+A_G)-J\in \MK^{(r)}_n\},\label{eqthetar}
\end{align}
which provide upper bounds on the stability number:   $\alpha(G)\le \vartheta^{(r)}(G) \le \zeta ^{(r)}(G)$. 
It is known that the program (\ref{eqzetar}) 
is feasible, i.e., $\zeta^{(r)}(G)<\infty$, if and only if $r\ge \alpha(G)-1$ and also that $\zeta^{(r)}(G)<\alpha(G)+1$, i.e., $\lfloor \zeta^{(r)}(G)\rfloor = \alpha(G)$, if and only if $r\ge \alpha(G)^2-1$ \cite{dKP2002,PVZ2007}.
On the other hand, the parameter $\vartheta^{(r)}(G)$ provides a nontrivial bound already at order $r=0$. Indeed, as shown in \cite{dKP2002},  the parameter  $\vartheta^{(0)}(G)$ coincides with $\vartheta'(G)$, the strengthening of the theta number $\vartheta(G)$ by Lov\'asz \cite{Lo79}, proposed in \cite{Sch79}. Recall that 
$$\vartheta(G)=\max\{\langle J,X\rangle: \text{Tr}(X)=1, X_{ij}=0 \ (\{i,j\}\in E), X\succeq 0\},$$
and $\vartheta'(G)$ is obtained by adding the nonnegativity constraint $X\ge 0$  to the above program. As is well-known we have 
\begin{equation}\label{eq-sandwich}
\alpha(G)\le \vartheta'(G)\le \vartheta(G)\le \chi(\overline G),
\end{equation}
 where $\chi(\overline G)$ denotes the coloring number of $\overline G$ (the complementary graph of $G$), i.e., the smallest number of cliques of $G$ needed to cover $V$.

Hence one can find $\alpha(G)$, {\em after rounding}, in $(\alpha(G))^2$ steps of the hierarchy $\zeta^{(r)}(G)$ or $\vartheta^{(r)}(G)$.
It is known that the linear bound $\zeta^{(r)}(G)$ is {\em never exact}: if $G$ is not the complete graph then $\zeta^{(r)}(G)>\alpha(G)$ for all $r$ \cite{PVZ2007}. On the other hand, de Klerk and Pasechnik \cite{dKP2002} conjecture that rounding is not necessary for the semidefinite parameter $\vartheta^{(r)}(G)$  and moreover that $\alpha(G)$ steps suffice to reach convergence.

\begin{conjecture}[De Klerk and Pasechnik \cite{dKP2002}]\label{conj1}
For any graph $G$ we have: $\vartheta^{(\alpha(G)-1)}(G)=\alpha(G)$.
\end{conjecture}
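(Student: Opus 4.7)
The goal is to prove the upper bound $\vartheta^{(\alpha-1)}(G) \le \alpha$ (with $\alpha := \alpha(G)$), since the matching lower bound $\vartheta^{(r)}(G) \ge \alpha(G)$ follows from the containment $\MK^{(r)}_n \subseteq \COP_n$. Equivalently, I must exhibit a sum-of-squares decomposition
$$\Big(\sum_{i=1}^n x_i^2\Big)^{\alpha-1}\, P_M(x) \in \Sigma, \qquad P_M(x) = \alpha \sum_i x_i^4 + 2\alpha \!\!\sum_{\{i,j\}\in E}\!\! x_i^2 x_j^2 - \Big(\sum_i x_i^2\Big)^2,$$
where $M = \alpha(I+A_G) - J$. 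The plan is to route this through the Motzkin--Straus/Lasserre connection announced in the abstract, i.e., to realize the desired SOS identity as finite convergence, at precisely order $\alpha-1$, of a sum-of-squares hierarchy for the polynomial program (\ref{motzkin-form}) on $\Delta_n$.

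The first step would be a structural analysis of the real zero set $Z \subset S^{n-1}$ of $P_M$: by Motzkin--Straus, $x \in Z$ iff $x^{\circ 2}$ is a minimizer of (\ref{motzkin-form}), and the minimizer set in $\Delta_n$ is precisely the convex hull of the normalized indicator vectors $\mathbf{1}_S/\alpha$ taken over the maximum stable sets $S$ of $G$. A pleasant base certificate is the restriction identity $P_M(x)\big|_{x_i=0\,(i\notin S)} = \sum_{i<j,\, i,j\in S}(x_i^2-x_j^2)^2$, which shows $P_M$ is tautologically SOS on each ``single-stable-set'' coordinate subspace; the whole difficulty is to patch these local certificates together using the multiplier $(\sum_i x_i^2)^{\alpha-1}$ and control the cross terms.

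The second step is to apply a Scheiderer-style finite convergence theorem for SOS optimization under second-order sufficiency: such a result produces a certificate at an order governed by the codimension of the zero variety, provided the boundary Hessian and strict complementarity conditions hold at each minimizer. For acritical $G$ the minimizer set of (\ref{motzkin-form}) is finite (zero-dimensional), all second-order conditions can be verified directly, and the sharp exponent $\alpha-1$ matches the dimension of the simplex face carrying a maximum-stable-set minimizer. This is essentially the route by which the acritical case announced in the abstract is handled.

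The principal obstacle, and the reason the full conjecture has resisted proof, is that a critical edge $e=\{i,j\}$ generates a positive-dimensional family of Motzkin--Straus minimizers interpolating between two swap-adjacent maximum stable sets $S_1,S_2$ with $S_1\triangle S_2=\{i,j\}$; boundary Hessian conditions fail along these segments, and generic SOS convergence theorems deliver an order $r(G)$ that can exceed $\alpha-1$. My attack would be by induction on the number of critical edges, exploiting that $x_ix_j$ vanishes on every minimizer whenever $\{i,j\}$ is critical, so that $x_i^2x_j^2$ is available as a Putinar-style multiplier in any refined SOS decomposition. The crux would be to show that each such multiplier exactly absorbs one unit of multiplier power from $(\sum_kx_k^2)^{\alpha-1}$, so that each critical edge contributes precisely one dimension of zero-set degeneracy and one SOS compensator. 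Making this ``one critical edge $=$ one saved power'' tradeoff tight is the key difficulty; it likely requires a new structural SOS identity specific to the geometry of Motzkin--Straus minimizer polytopes, rather than any generic polynomial-optimization convergence theorem, which is why I expect this last step to be, by a wide margin, the main obstacle.
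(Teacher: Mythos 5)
You are attempting to prove Conjecture \ref{conj1}, which this paper does not prove and which remains open: the paper establishes only the much weaker Conjecture \ref{conj2} (finite convergence at \emph{some} unspecified order), and only for acritical graphs (Theorem \ref{theo-finite-las-acritical}, via Nie's Theorem \ref{theo-Nie}). Your proposal, to its credit, locates the difficulty correctly, but it does not overcome it: the entire content of the conjecture beyond known partial results is packed into your final ``one critical edge $=$ one saved power'' absorption step, for which you offer no argument and which you yourself flag as the main obstacle. A plan whose crux is left open is not a proof; as written, nothing is established that goes beyond the literature the paper already cites (e.g., the cases $\alpha(G)\le 8$ in \cite{GL2007}).

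There are also concrete errors in the parts you treat as settled. First, the minimizer set of \eqref{motzkin-form} is \emph{not} the convex hull of the vectors $\chi^S/\alpha(G)$ over maximum stable sets $S$. By Proposition \ref{global_min}, a global minimizer must have support whose connected components are cliques $C_1,\ldots,C_k$ with $k=\alpha(G)$ and weight exactly $1/k$ on each clique; only convex combinations along critical-edge swaps survive. For $G=C_4$ (with $\alpha=2$ and maximum stable sets $\{1,3\}$, $\{2,4\}$), the midpoint $x=(1/4,1/4,1/4,1/4)$ of the two minimizers gives $f_G(x)=3/4>1/2$, so your claimed convex-hull description fails already here. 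Second, your assertion that in the acritical case ``the sharp exponent $\alpha-1$ matches the dimension of the simplex face'' is unsupported: Nie's theorem and the Boundary Hessian Condition machinery of Marshall yield no degree bound whatsoever, and the degree-bound results of Kriel--Schweighofer \cite{KS1} do not apply since the minimizers of \eqref{motzkin-form} lie on the boundary of $\Delta_n$. Thus even for acritical graphs, the order-$(\alpha(G)-1)$ statement of Conjecture \ref{conj1} is not obtained by this route; one gets only $\vartheta^{(r)}(G)=\alpha(G)$ for some finite $r$, after translating through Corollary \ref{cor-link-alpha} (note also that this translation roughly doubles orders, $\vartheta^{(2r)}(G)$ corresponding to $f^{(r+1)}_{G,po}$, an accounting your exponent claim ignores). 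Finally, your suggestion that $x_i^2x_j^2$ for critical edges $\{i,j\}$ can serve as a Putinar-style multiplier faces the structural problem the paper emphasizes: positive-dimensional minimizer segments make the sufficient optimality conditions (and BHC) fail identically along them, and no generic convergence theorem currently converts vanishing products into saved multiplier powers.
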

In fact, it is not even known whether finite convergence holds at some step, so also the following weaker conjecture is still open in general.
\begin{conjecture}\label{conj2}
For any graph $G$ we have:  $\vartheta^{(r)}(G)=\alpha(G)$ for some $r\in \mathbb{N}$.
\end{conjecture}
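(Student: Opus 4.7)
The plan is to recast the parameter $\vartheta^{(r)}(G)$ as a Lasserre-type sum-of-squares hierarchy for an equivalent polynomial optimization problem over the unit sphere, and then invoke known finite-convergence theorems. By the definitions \eqref{eqCKnr} and \eqref{eqthetar}, together with $\alpha(G)\le \vartheta^{(r)}(G)$, the equality $\vartheta^{(r)}(G)=\alpha(G)$ is equivalent to
\[
\Big(\sum_{i=1}^n x_i^2\Big)^r P_{M_{G,\alpha(G)}}(x) \in \Sigma.
\]
Since $P_M(x)=p_M(x^{\circ 2})$, the substitution $y_i=x_i^2$ identifies this with an SOS certificate for the Motzkin--Straus polynomial on the unit sphere (the preimage of $\Delta_n$), so finite convergence of $\vartheta^{(r)}$ reduces to the existence of such a certificate at some finite degree.

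Next, I would invoke a finite-convergence theorem for Lasserre-type hierarchies on compact smooth sets. Results of this flavour (in the spirit of Nie, or via the boundary Hessian conditions of Marshall and Scheiderer) yield finite-degree SOS certificates whenever the polynomial optimization problem satisfies the usual regularity conditions at each global minimizer: constraint qualification, strict complementarity, and second-order sufficiency. Because the sphere is smooth and compact, the task reduces to classifying the global minimizers of the quartic $P_{M_{G,\alpha(G)}}(x)$ on $\{x:\sum_i x_i^2=1\}$ and verifying regularity at each of them.

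Third, I would analyse the minimizer set. For every maximum stable set $S$ of $G$, the barycenter $\mathbf{1}_S/|S|$ is a Motzkin--Straus minimizer; additional minimizers can arise on faces of $\Delta_n$ whose support is a union of overlapping maximum stable sets. A careful case analysis should show that such additional minimizers exist precisely when $G$ has some critical edge $e$ (with $\alpha(G\setminus e)>\alpha(G)$), and conversely that if $G$ is acritical then the minimizer set is finite, consisting exactly of the barycenters of maximum stable sets. For acritical $G$ one can then verify the regularity conditions pointwise and conclude finite convergence.

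The main obstacle is extending the argument to graphs with critical edges. In that setting Motzkin--Straus admits a positive-dimensional semi-algebraic variety of minimizers, and none of the off-the-shelf finite-convergence theorems for Lasserre hierarchies applies directly. Overcoming this would require either a Positivstellensatz tailored to the structure of $P_{M_{G,\alpha(G)}}$ along its minimizer variety, or a combinatorial reduction from arbitrary graphs to acritical ones that preserves the hierarchy $\vartheta^{(r)}$. This is precisely why Conjecture~\ref{conj2} remains open in full generality.
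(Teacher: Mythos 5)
You have not proven the statement, but that is the right outcome: Conjecture~\ref{conj2} is an \emph{open conjecture} which the paper itself does not resolve. What the paper actually establishes is the special case of acritical graphs (Theorem~\ref{theo-finite-las-acritical} and Corollary~\ref{finite-acritical}), and your sketch reproduces exactly that partial result and its proof strategy: translate $\vartheta^{(r)}(G)=\alpha(G)$ into a uniform-denominator SOS certificate for $P_{M_{G,\alpha(G)}}$, relate it via Theorem~\ref{theo1} and Theorem~\ref{theo-squares-vera} to Lasserre-type hierarchies for the Motzkin--Straus problem, classify the global minimizers (Proposition~\ref{global_min}: infinitely many precisely when $G$ has a critical edge, Corollary~\ref{cor-global}), verify \eqref{eqCQC}, \eqref{eqSCC}, \eqref{eqSOSC} at the barycenters of maximum stable sets (Proposition~\ref{prop-strict_local_ch}), and invoke Nie's theorem. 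Your identification of the obstruction in the general case --- a positive-dimensional minimizer set when critical edges are present, to which no off-the-shelf finite-convergence theorem applies --- is exactly the paper's diagnosis, and your two suggested escape routes (a tailored Positivstellensatz, or a reduction preserving $\vartheta^{(r)}$) correspond to the paper's remarks on exploiting the preordering structure and on the $\epsilon$-perturbed hierarchies of Section~\ref{sec-perturbed}, for which finite convergence holds but without $\epsilon$-independent degree bounds.

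One minor difference in routing: you propose to apply the regularity analysis directly to the sphere formulation \eqref{motzkin-form-sq}, whereas the paper applies Nie's theorem to the simplex formulation \eqref{motzkin-form} (where strict complementarity of the constraints $x_i\ge 0$ is the substantive condition, equivalent to $\deg_S(j)\ge 2$ for $j\notin S$, i.e.\ acriticality) and then transfers the conclusion to $\vartheta^{(r)}$ through the chain $1/\vartheta^{(2r)}(G)=F^{(2r+2)}_G=f^{(r+1)}_{G,po}\ge f^{(r+1)}_G$ of Corollary~\ref{cor-link-alpha}. The sphere route can be made to work (each simplex minimizer lifts to finitely many sign patterns, and the Hessian of the Lagrangian is positive definite on the tangent space when $G$ is acritical), but you would need to check this regularity explicitly rather than assert it; the simplex route makes the role of critical edges more transparent. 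In either case, nothing in your proposal closes the conjecture for graphs with critical edges, and the paper confirms that this remains open.
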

Let us call the smallest integer $r$ for which $\vartheta^{(r)}(G)=\alpha(G)$ the {\em $\vartheta$-rank} (or, simply, the {\em rank}) of $G$, denoted as $\rrank(G)$. \MoL{In other words, $\rrank(G)$ is the smallest integer $r$ for which the matrix $M_G:=\alpha(G)(I+A_G)-J$ belongs to the cone $\mathcal K^{(r)}_n$.} 
Then Conjecture \ref{conj2} asks whether the rank is finite for all graphs, while  Conjecture \ref{conj1} asks whether $\rrank(G)\le \alpha(G)-1$.

We recap some of the known results on these conjectures.
In view of (\ref{eq-sandwich}), if $\alpha(G)=\chi(\overline G)$ then $\vartheta^{(0)}(G)=\alpha(G)$ and thus $G$ has $\rrank$ 0; this holds, e.g., for perfect graphs \cite{Lo79}. Every graph satisfying $\vartheta(G)=\alpha(G)$ also has $\rrank$ 0; this is the case,  e.g, for the Petersen graph and, more generally, for Kneser graphs \cite{LovszKnesersCC}.
It is known that odd cycles, their complements  and odd wheels have $\vartheta$-rank 1 and thus satisfy Conjecture~\ref{conj1} \cite{dKP2002,PVZ2007}.
Conjecture~\ref{conj1} has been shown to hold for all graphs with $\alpha(G)\le 8$ in \cite{GL2007} (see also  \cite{PVZ2007} for the case $\alpha(G)\le 6$), but the general case is still wide open. 
Note that the conjectured bound $\alpha(G)-1$ on  $\rrank(G)$ is tight. As a first example,  the cycle $C_5$ has $\alpha(C_5)=2$ and $\rrank(C_5)=1$. As a second example, the complement of the icosahedron has $\alpha(G)=3$ and $\rrank(G)=2$; indeed,  $\rrank(G)\ge 2$ as $\vartheta^{(1)}(G)= 1+\sqrt 5>3$ \cite{dKP2002},  and $\rrank(G)\le 2$ as Conjecture~\ref{conj1} holds when $\alpha(G)=3$.

In this paper we want to further investigate the above conjectures.

\subsubsection*{Links to other hierarchies of Lasserre type}

Our approach is to relate the bounds $\vartheta^{(r)}(G)$ to other bounds that can be obtained by applying the  Lasserre hierarchy to the polynomial optimization problem (\ref{motzkin-form}). 
For this consider the polynomials $$f_G(x)=x^T(I+A_G)x \quad \text{ and } \quad F_G(x) =f_G(x^{\circ 2}) = (x^{\circ 2})^T (I+A_G)x^{\circ 2}.$$ That is, $f_G=p_{M}$ and $F_G=P_{M}$ for the matrix $M=I+A_G$ (recall (\ref{eqpPM})). 
Yet another reformulation of  (\ref{motzkin-form}) is that  $\alpha(G)$ can also be obtained via polynomial optimization over  the unit sphere: 
\begin{align}\label{motzkin-form-sq}\tag{M-S-Sphere}
{1\over \alpha(G)} =\min\Big\{F_G(x): x\in \oR^n, \sum_{i=1}^nx_i^2=1\Big\}.
\end{align} 
Now one can obtain bounds on $\alpha(G)$ by  applying the sum-of-squares approach of Lasserre \cite{Las2001}  to any of  the two formulations (\ref{motzkin-form}) and (\ref{motzkin-form-sq}). First we recall some notation. Given polynomials $g_0=1,g_1,\ldots,g_m\in \oR[x]$ and  $r\in\oN $ define the sets
\begin{align}
\MM(g_1,\ldots,g_m)_{r}=\Big\{\sum_{j=0}^m\sigma_j g_j: \sigma_j\in \Sigma, \deg(\sigma_jg_j)\le 2r\Big\},\label{eqQM}\\
\MT(g_1,\ldots,g_m)_r=\MM\Big(\prod_{j\in J} g_j: J\subseteq [m]\Big)_r,\label{eqPO}
\end{align}
known, respectively, as the quadratic module and the preordering generated by the $g_j$'s,   truncated at degree $2r$.
In addition, given polynomials $h_1,\ldots,h_k\in\oR[x]$, the set
\begin{equation}\label{eqideal}
\langle h_1,\ldots,h_k\rangle_r=\Big\{\sum_{i=1}^k u_i h_i: u_i\in\oR[x], \deg(u_ih_i)\le r\Big\}
\end{equation}
is the ideal generated by the $h_i$'s, truncated at degree $r$. 
Throughout, $\oR[x]_r$ denotes the set of polynomials with degree at most $r$ and  we set $\Sigma_r=\Sigma\cap\oR[x]_{2r}$, which consists of all \MoL{polynomials} of the form $\sum_i p_i^2$ for some $p_i\in \oR[x]_r$.
Corresponding to problems \eqref{motzkin-form} and \eqref{motzkin-form-sq} we now define  the parameters
\begin{align}
f^{(r)}_G= \sup\Big\{\lambda: f_G-\lambda \in \MM(x_1,\ldots,x_n)_{r} +
\Big\langle 1-\sum_{i=1}^n x_i\Big\rangle_{2r}\Big\}, \label{eqlassimplex}\\
f^{(r)}_{G,po}= \sup\Big\{\lambda : f_G-\lambda \in \MT(x_1,\ldots,x_n)_r +\Big\langle 1-\sum_{i=1}^nx_i\Big\rangle_{2r}\Big\}, \label{eqlaspo}
 \\
F^{(r)}_G=\sup\Big\{\lambda: F_G-\lambda \in \Sigma_{r}+\Big\langle 1-\sum_{i=1}^nx_i^2\Big\rangle_{2r} \Big\}, \label{eqlassphere}
\end{align}
which clearly satisfy $1/\alpha(G)\ge f^{(r)}_{G,po}\ge f^{(r)}_G$, 
$1/\alpha(G)\ge F^{(r)}_G$ and $F^{(2r)}_G\ge f^{(r)}_G$ for any $r\in\oN$.
We will  establish  further links, also  to  the parameters $\vartheta^{(r)}(G)$. 
In particular, we show that the approach based on  approximating the copositive cone by the cones $\mathcal K^{(2r)}_n$  (as in (\ref{eqthetar})) and the approach based on using the preordering truncated at degree $r+1$ (as in (\ref{eqlaspo})) are equivalent: for any $r\ge 0$ we have
\begin{align}\label{eqlink1}
   \frac{1}{\alpha(G)}\geq \dfrac{1}{\vartheta^{(2r)}(G)}=\lsph_G^{(2r+2)}= f^{(r+1)}_{G,po} \geq \lsim_G^{(r+1)}.
\end{align}
We say that {\em finite convergence} holds for the parameter $f^{(r)}_G$ if $f^{(r)}_G=1/\alpha(G)$ for some $r\in \oN$, and analogously for the other parameters. Based on the  inequalities  (\ref{eqlink1}) we see that finite convergence for the parameters $f^{(r)}_G$ implies finite convergence for the other parameters, and thus  in particular for $\vartheta^{(r)}(G)$, which would settle Conjecture~\ref{conj2}.

\subsubsection*{Role of critical edges}
Our first main result is  showing finite convergence of the bounds $f^{(r)}_G$  for the class of {\em acritical graphs.}
Recall that an edge $e$ of $G$ is said to be  {\em critical} if $\alpha(G\backslash e)=\alpha(G)+1$. The graph $G$ is called {\em $\alpha$-critical}  (or, simply, {\em critical}) when all its edges are critical, and  {\em acritical} when $G$  does not have any critical edge. For example, odd cycles are $\alpha$-critical  while even cycles are acritical. Critical edges and critical graphs have been studied in the literature; see, e.g.  \cite{LovPlumm}. It turns out that the notion of critical edges plays a central role in the study of the finite convergence of the above hierarchies of bounds.

On the one hand, it can be easily observed that deleting noncritical edges can only increase the $\rrank$. Indeed, if $\alpha(G\setminus{e})=\alpha(G)$ then $M_G-M_{G\setminus{e}}=\alpha(G)(A_G-A_{G\backslash e})$ is entry-wise nonnegative  
and thus belongs to $\MK^{(0)} \subseteq \MK^{(r)}$. Hence,  $M_{G\setminus{e}}\in \kr_n$ implies $M_G\in \kr_n$, which shows $\rrank(G)\le \rrank(G\setminus e)$.
Hence, after iteratively deleting noncritical edges, we obtain a subgraph $H$ of $G$ which is critical with $\alpha(H)=\alpha(G)$ and satisfies:
$\rrank(G)\le \rrank(H)$. As shown in Example \ref{excritical} below this inequality can be strict.
Therefore, finite convergence of the 
 parameters $\vartheta^{(r)}(G)$ (or $f^{(r)}_{G,po}, F^{(r)}_G)$ for the class of {\em critical} graphs implies the same property for general graphs.
Summarizing, it would suffice to show Conjectures  \ref{conj1} and  \ref{conj2} for  the class of {\em critical} graphs.

On the other hand, we can show finite convergence of the parameters $f^{(r)}_G$ for the class of {\em acritical} graphs 
and thus Conjecture \ref{conj2} holds  for acritical graphs (see Corollary \ref{finite-acritical-w}).

It turns out that critical edges  also play a crucial role in the analysis of the graphs with $\rrank$  0. In  the follow-up work \cite{LV2021} we  can indeed 
characterize the {\em critical} graphs with $\rrank$ 0, namely,   as those that \MoL{can} be covered by $\alpha(G)$ cliques, i.e., such that $\alpha(G)=\chi(\overline G)$. In addition,  in \cite{LV2021} we show that the problem of deciding whether a graph has $\rrank$ 0 can be algorithmically reduced to the same question restricted to the class of acritical graphs.

\begin{example}\label{excritical}
Consider the graph $G$ in Figure \ref{fig-GH1H2}, obtained by adding one \MoL{pendant} node to the cycle $C_5$.
Then,  $\alpha(G)=3=\overline\chi( G)$ and thus  $\rrank(G)=0$. 
Note that $G$ has two critical subgraphs $H_1$ and $H_2$ with $\alpha(H_1)=\alpha(H_2)=3$,  shown in Figure \ref{fig-GH1H2}: $H_1$ is $C_5$ with an isolated node, which has   $\rrank(H_1)=1$  (see, e.g., \cite{dKP2002}), while $H_2$ consists of three independent edges with  $\rrank(H_2)=0$ (since $\alpha(H_2)=\overline \chi(H_2)=3$). 
\end{example}

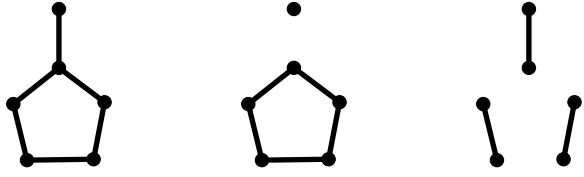
\begin{figure}\centering
	\definecolor{ududff}{rgb}{0.30196078431372547,0.30196078431372547,1.}
	\definecolor{f}{rgb}{0., 0., 0.}
	\begin{tikzpicture}[line cap=round,line join=round,>=triangle 45,x=.6cm,y=.6cm]
	\clip(-1.44,1.34) rectangle (3.42,6.28);
	\draw [line width=2.pt] (-0.02,3.18)-- (0.98,3.98);
	\draw [line width=2.pt] (0.98,3.98)-- (1.98,3.22);
	\draw [line width=2.pt] (1.74,1.96)-- (1.98,3.22);
	\draw [line width=2.pt] (0.28,1.94)-- (-0.02,3.18);
	\draw [line width=2.pt] (0.28,1.94)-- (1.74,1.96);
	\draw [line width=2.pt] (0.98,3.98)-- (0.98,5.28);
	\begin{scriptsize}
	\draw [fill=f] (0.98,3.98) circle (2.5pt);
	\draw [fill=f] (-0.02,3.18) circle (2.5pt);
	\draw [fill=f] (0.28,1.94) circle (2.5pt);
	\draw [fill=f] (1.74,1.96) circle (2.5pt);
	\draw [fill=f] (1.98,3.22) circle (2.5pt);
	\draw [fill=f] (0.98,5.28) circle (2.5pt);
	\end{scriptsize}
	\end{tikzpicture}
	\definecolor{ududff}{rgb}{0.30196078431372547,0.30196078431372547,1.}
	\begin{tikzpicture}[line cap=round,line join=round,>=triangle 45,x=.6cm,y=.6cm]
	\clip(-1.44,1.34) rectangle (3.42,6.28);
	\draw [line width=2.pt] (1.74,1.96)-- (1.98,3.22);
	\draw [line width=2.pt] (0.28,1.94)-- (-0.02,3.18);
	\draw [line width=2.pt] (-0.02,3.18)-- (0.98,3.98);
	\draw [line width=2.pt] (0.98,3.98)-- (1.98,3.22);
	\draw [line width=2.pt] (1.74,1.96)-- (0.28,1.94);
	\begin{scriptsize}
	\draw [fill=f] (0.98,3.98) circle (2.5pt);
	\draw [fill=f] (-0.02,3.18) circle (2.5pt);
	\draw [fill=f] (0.28,1.94) circle (2.5pt);
	\draw [fill=f] (1.74,1.96) circle (2.5pt);
	\draw [fill=f] (1.98,3.22) circle (2.5pt);
	\draw [fill=f] (0.98,5.28) circle (2.5pt);
	\end{scriptsize}
	\end{tikzpicture}
	\definecolor{ududff}{rgb}{0.30196078431372547,0.30196078431372547,1.}
	\begin{tikzpicture}[line cap=round,line join=round,>=triangle 45,x=.6cm,y=.6cm]
	\clip(-1.44,1.34) rectangle (3.42,6.28);
	\draw [line width=2.pt] (1.74,1.96)-- (1.98,3.22);
	\draw [line width=2.pt] (0.28,1.94)-- (-0.02,3.18);
	\draw [line width=2.pt] (0.98,3.98)-- (0.98,5.28);
	\begin{scriptsize}
	\draw [fill=f] (0.98,3.98) circle (2.5pt);
	\draw [fill=f] (-0.02,3.18) circle (2.5pt);
	\draw [fill=f] (0.28,1.94) circle (2.5pt);
	\draw [fill=f] (1.74,1.96) circle (2.5pt);
	\draw [fill=f] (1.98,3.22) circle (2.5pt);
	\draw [fill=f] (0.98,5.28) circle (2.5pt);
	\end{scriptsize}
	\end{tikzpicture}
	\caption{Graph $G$ (left), graph $H_1$ (middle), graph $H_2$ (right)}\label{fig-GH1H2}
\end{figure}

\subsubsection*{Number of global minimizers and finite convergence}
A main  reason why critical edges play a role in the study of finite convergence comes from the fact that problem (\ref{motzkin-form}) has infinitely many global minimizers when $G$ has critical edges. Indeed, next to the global minimizers arising from the maximum stable sets (of the form  $\chi^S/\alpha(G)$ with $S$ stable of size $\alpha(G)$), also some special convex combinations of them are global minimizers  when $G$ has  critical edges (see Corollary \ref{cor_global_min}). \MoL{Note that the existence of spurious mininizers (i.e., not directly arising from maximum stable sets) is well-known, see, e.g., \cite{Bomze1997,PJ1996}.}
Our approach to prove finite convergence of the bounds $f^{(r)}_G$ is to apply a result by Nie \cite{Nie} (itself based on the so-called Boundary Hessian Condition of Marshall~\cite{Marshall2006}), which requires to check whether the classical sufficient optimality conditions hold at all global minimizers of \eqref{motzkin-form}. These conditions imply in particular that the problem must have finitely many minimizers, which explains why we can only apply it to acritical graphs.


There is a \MoL{well-known} easy remedy to force having finitely many minimizers, simply by perturbing the Motzkin-Straus formulation \eqref{motzkin-form}. Indeed, if we replace  the adjacency matrix $A_G$ by $(1+\epsilon)A_G$ for any $\epsilon>0$, then the corresponding quadratic program still has optimal value $1/\alpha(G)$, but now the only global minimizers are those arising from the maximum stable sets. To get this property it would in fact suffice to perturb the adjacency matrix at the positions corresponding to the critical edges of $G$.
For the hierarchies of parameters obtained via this perturbed formulation we can show the finite convergence property, \MoL{see Theorem \ref{theo-finite-w} (which applies to the general setting of weighted graphs as discussed below).}
 However, since we do not know a  bound on the order of  convergence, which {\em does not depend on $\epsilon$},  it remains unclear how this can be used to derive the finite convergence of the original (unperturbed) parameters. 

Nevertheless, as a byproduct of our analysis of the minimizers of the (perturbed) Motzkin-Straus formulation, we can show NP-hardness of the problem of deciding whether a standard quadratic optimization problem has finitely many global minimizers. The key idea is to reduce it to the problem of testing critical edges which is itself NP-hard, see Section \ref{sec-complexity}.

\MoL{\subsubsection*{Extension to the weighted stable set problem}
Our results extend to the general setting of weighted graphs $(G,w)$, where $w\in \oR^V$ is a positive node weight vector, i.e., with $w_i>0$ for all $i\in V$.
Then $\alpha(G,w)$ denotes the maximum weight $w(S)=\sum_{i\in S}w_i$ of a stable set $S$ in $G$, with $\alpha(G,e)=\alpha(G)$ for the all-ones weight vector $w=e=(1,\ldots,1)$.
The following analogue of Motzkin-Straus formulation has been shown in \cite{GHPR}:
\begin{equation}\label{motzkin-form-w}\tag{M-S-weighted}
{1\over \alpha(G,w)}=\min \{p_B(x)=x^TBx: x\in \Delta_n\},
\end{equation}
where the matrix $B$ is of the form $B=B_w+A$, with $(B_w)_{ii}=1/w_i$, $A_{ii}=0$ ($i\in V$), 
$(B_w)_{ij}=(1/w_i+1/w_j)/2$, $A_{ij}\ge 0$ ($\{i,j\}\in E$), and $(B_w)_{ij}=A_{ij}=0$ ($\{i,j\}\not\in E$).
In  the case $w=e$ we have $B_e=I+A_G$; hence, if we select $A=0$ then we find the original Motzkin-Straus program (\ref{motzkin-form}) and if we select $A=\epsilon A_G$ then we find the perturbed Motzkin-Straus formlation mentioned in the previous paragraph.
There is a natural weighted analogue of critical edges: call an edge $\{i,j\}$  {\em $w$-critical} in $G$ if there exists $R\subseteq V$ such that both $R\cup\{i\}$ and $R\cup \{j\}$ are stable sets with $\alpha(G,w)=w(R\cup\{i\})=w(R\cup \{j\})$. 
Then, program (\ref{motzkin-form-w}) has finitely many minimizers if and only if $A_{ij}>0$ for all edges $\{i,j\}\in E$ that are $w$-critical and, in that case, the sufficient optimality conditions hold at all minimizers (see Proposition \ref{prop-finiteMSw}).
In addition, in that case, we can show the finite convergence of the semidefinite 
bounds $\vartheta^{(r)}(G,w)$ (the weighted analogues of $\vartheta^{(r)}(G)$) to $\alpha(G,w)$ when $G$ has no $w$-critical edge (see Section~\ref{sec-convergence-w}). 
 }

\subsubsection*{Links to related literature}
Given a graph $G$ define  the poynomial
$Q_G(x) =(x^{\circ 2})^T(\alpha(G)(I+A_G)-J)x^{\circ 2}$, which  is an even form (i.e., a homogeneous polynomial, with all variables appearing with an even degree) with degree 4. As $Q_G$  is nonnegative on $\oR^n$,  by Artin's theorem, it can be written as a sum of squares of rational functions: $Q_G= \sum_{i=1}^m p_i^2/h^2$ for some $p_i,h\in\oR[x]$. Then, what Conjecture \ref{conj2} claims is that the denominator $h^2$ can be chosen to be of the form $(\sum_i x_i^2)^r$ for some $r\in \oN$. Note that if $Q_G$ would be strictly positive (i.e., vanish only at the origin) then this claim would follow from a result of P\'olya \cite{Polya} (see also Reznick \cite{Reznick1995}). However, the polynomial $Q_G$ is not strictly positive, since any global minimizer of problem (\ref{motzkin-form}) provides  a nonzero root of $Q_G$ lying in $\Delta_n$. \MoL{On the positive side, in \cite{CPR} it is shown that   P\'olya's result still holds for some nonnegative even forms $Q$ with zeros (assuming, among others, that they are located at the corners of the simplex).}
In addition, Scheiderer \cite{Scheiderer2006} shows that if $Q$ is an arbitrary  form in three variables that is nonnegative on $\oR^3$ then it is indeed true that $(\sum_{i=1}^3x_i^2)^r Q\in \Sigma $  for some $r\in \oN$. 
On the negative side, for any $n\ge 4$, there are examples  of $n$-variate nonnegative polynomials $Q$ for which  $(\sum_i x_i^2)^rQ\not\in\Sigma$ for all $r\in \oN$;  such $Q$ can be chosen to be an even form of degree 4 for \MoL{$n\ge 6$ (see \cite{LV2021})}.
So Conjecture~\ref{conj2} claims a rather remarkable property for the class of forms $Q_G$ (and Conjecture \ref{conj1} claims an even stronger property).
 In this paper we will show that Conjecture \ref{conj2} holds when the graph $G$ is acritical, which corresponds to the case when  the polynomial $Q_G$ has  finitely many zeros in the simplex $\Delta_n$. We will in fact show this property for a larger class of degree 4 even forms (\MoL{see Section \ref{sec-convergence-w}}).
 
Our approach  relies on considering the Lasserre  hierarchy (\ref{eqlassimplex}) for problem (\ref{motzkin-form}) and  using the fact that its finite convergence implies finite convergence of the hierarchy $\vartheta^{(r)}(G)$  (in view of (\ref{eqlink1})). 
The goal is thus to  show  finite convergence of Lasserre hierarchy (\ref{eqlassimplex}) or, equivalently, that 
the polynomial $f_G-1/\alpha(G) =x^T(I+A_G)x-1/\alpha(G)$ belongs to the quadratic module $\mathcal M(x_1,\ldots,x_n, \pm (1-\sum_i x_i)).$
The question of identifying sufficient conditions for finite convergence of Lasserre hierarchy applied to a  polynomial optimization problem has been much studied in the literature; see,
 in particular, the works by Scheiderer \cite{Scheiderer2005,Scheiderer2006}, Marshall \cite{Marshall2006,Marshall2008,Marshall2009}, Kriel and Schweighofer \cite{KS1,KS2}, Nie \cite{Nie}, and references therein.
  Assume $f$ is a polynomial nonnegative on a basic closed semialgebraic set $K$ defined by polynomial (in)equalities, whose associated quadratic module $\mathcal M$ is Archimedean. Marshall \cite[Theorem 1.3]{Marshall2009} gives a set of algebraic conditions on the zeros of the polynomial $f$ in the  set $K$, known as the {\em Boundary Hessian Condition} (BHC), that ensures that $f$ belongs to the quadratic module $\mathcal M$.  Nie \cite{Nie} shows that (BHC) holds if the natural sufficient optimality conditions hold at all the global minimizers of $f$ over $K$ and thus Lasserre hierarchy has  finite convergence  (see  Theorem \ref{theo-Nie} below). 
 \MoL{ This result is also used in the recent work \cite{BM} to investigate  finite convergence (and exactness) of the dual moment hierarchy. }
  Note that a restriction to the application of these results is that  these optimality conditions (and  (BHC)) can  hold only when the number of global minimizers is finite. Since these conditions depend on the optimization problem, one faces the same issues also when using the (richer) preordering instead of the quadratic module. We make this remark in view of the equivalent reformulation of the parameters $\vartheta^{(r)}(G)$ in terms of the preordering-based  hierarchy $f^{(r)}_{G,po}$ mentioned in (\ref{eqlink1}).
Let us also mention that  while the result in \cite{Marshall2009} does not come with a degree bound for the order of the relaxation where finite convergence takes place, such a degree bound is given in  \cite{KS1}. However the results in \cite{KS1} require (among others)  the \MoL{additional} restriction  that the finitely many global minimizers should all lie in the interior of the set $K$, which is not the case for problem (\ref{motzkin-form}), neither  for  its perturbations  introduced in the paper. Finally, there are other results that show finite convergence of the Lasserre hierarchy, for instance, under some convexity assumptions (see \cite{dKL2011,Lasserre2009}), or when  the semi-algebraic set $K$ is finite (see Nie \cite{Nie2013}), or when the description of the set $K$ is enriched with various additional polynomial constraints (e.g., arising from  KKT conditions) (see, e.g.,  \cite{HNW,Nie2019} and further references therein).

 There is also interest in the literature in understanding when the first level of Lasserre hierarchy (also known as the Shor relaxation or the basic semidefinite relaxation) is exact when applied to quadratic optimization problems (see, e.g., the recent papers  \cite{BY2020,WK2021} and further 
 references therein). For standard quadratic programs, where one wants to minimize a quadratic form $p_M(x)=x^TMx$ over $\Delta_n$, we characterize the set of matrices $M$ for which the first level relaxation is exact. Moreover, we show that this holds precisely when the first level relaxation\ is feasible (see Lemma~\ref{ch-qp-level-1}). In the special  case of problem (\ref{motzkin-form}), when $M=I+A_G$, the first level relaxation gives the parameter $f^{(1)}_G$, which will be shown to  be exact (i.e., equal to $1/\alpha(G)$) precisely when the graph $G$ is a disjoint union of cliques (see Lemma \ref{lemexactfG1}).
 One can also use the preordering instead of the quadratic module and  ask when the corresponding first level relaxation is exact. For problem (\ref{motzkin-form}) this amounts to asking when $f^{(1)}_{G,po}=1/\alpha(G)$ or, equivalently (in view of (\ref{eqlink1})), when $\vartheta^{(0)}(G)=\alpha(G)$. Characterizing these graphs seems difficult in general, but, when restricting to critical graphs,  $\vartheta^{(0)}(G)=\alpha(G)$ if and only if $G$ can be covered by $\alpha(G)$ cliques  (see \cite{LV2021}). \MoL{This question is considered in   \cite{GY2021}  for a general standard quadratic program, asking to find the minimum value $p_{\min}$ of  $p_M(x)=x^TMx$ over  $\Delta_n$.  
An algebraic characterization is given there of the matrices $M$ for which  equality $p_{\min}=\Theta^{(0)}_M$ holds, where $\Theta^{(0)}_M$ is the analogue of $\vartheta^{(0)}(\cdot)$ as defined  in (\ref{relax-copositive}), together with some concrete classes of matrices achieving this equality.}
 
Finally, let us point out that the hierarchies considered in this paper are all based on continuous formulations of the stability number.  Alternatively, one can formulate $\alpha(G)$ as the maximum value of $\sum_{i\in V}x_i$ taken over all  $x$ in the discrete  cube $\{0,1\}^n$ that satisfy the edge constraints $x_i+x_j\le 1$ for all $\{i,j\}\in E$. One can model  the binary variables by the quadratic constraints $x_i^2=x_i$ ($i\in [n]$) and apply 
 the Lasserre/Parrilo approach, which provides a hierarchy of bounds, known to converge to $\alpha(G)$ in finitely many steps, in fact in $\alpha(G)$ steps  \cite{Lasserre2001b,Laurent2003}. When adding suitable nonnegativity conditions one gets the parameters $\las_r(G)$ that satisfy $\alpha(G)\le \las_r(G)\le \vartheta^{(r-1)}(G)$ for any $r\ge 1$ and $\las_1(G)=\vartheta^{(0)}(G)$   \cite{GL2007}.  Hence, what  Conjecture \ref{conj1} claims is that the continuous copositive-based hierarchy $\vartheta^{(r)}(G)$ has the same finite convergence behaviour as the discrete formulation-based Lasserre hierarchy. As observed above this question is also relevant to several other interesting aspects of real algebraic geometry.
 
\subsubsection*{Organization of the paper}
The paper is organized as follows. In Section \ref{sec-Preliminaries}, we recall the classical  optimality conditions in nonlinear programming and their use to show finite convergence  of the Lasserre hierarchy for polynomial optimization. In Section \ref{section-link}  we link several sum-of-squares approximation hierarchies for standard quadratic programs and we discuss some questions about the feasibily and exactness of these relaxations and their application to the Motzkin-Straus formulation (\ref{motzkin-form}).
Section \ref{minimizers-w} is focused on the study of the minimizers of  problem (\ref{motzkin-form}), where, in particular, we prove that (\ref{motzkin-form}) has finitely many minimizers precisely when the graph is acritical. \MoL{We  in fact prove these results in the general setting of weighted graphs $(G,w)$ and show that the number of global minimizers of the program (\ref{motzkin-form-w}) is finite precisely when there are no $w$-critical edges in $G$.}  In Section \ref{sec-convergence-w} we apply  the previous results  to show finite convergence of the semidefinite hierarchy $\vartheta^{(r)}(G)$ when $G$ is  acritical \MoL{or, more generally, of the hierarchy $\vartheta^{(r)}(G,w)$ to $\alpha(G,w)$ when $G$ has no $w$-critical edge.} In addition, we consider perturbed hierarchies for the (weighted) stability number and we give some facts and open questions about them. In Section \ref{sec-complexity}, we investigate the complexity of the problem of deciding whether a standard quadratic program has finitely many minimizers. 

\subsubsection*{Notation}

Notation about polynomials will be given in Section \ref{sec-Preliminaries}, but here 
we group  some notation about graphs and matrices  that is used throughout the paper. 
Given a graph $G=(V=[n],E)$, a set $S\subseteq V$ is stable if it does not contain an edge, and $\alpha(G)$ is the maximum cardinality of a stable set. A set $C\subseteq V$ is a clique if any two distinct vertices in $C$ are adjacent, and  $\chi(\overline G)$ denotes the minimum number of cliques whose union is $V$.  
For a set $S\subseteq V$ and a vertex $j\in V\setminus S$, we 
let $N_S(j)=\{i\in S: \{i,j\}\in E\}$ denote the set of vertices  $i\in S$ that are adjacent to $j$. 
An edge $e\in E$ is critical if $\alpha(G\setminus e)=\alpha(G)+1$, $G$ is called critical if all its edges are critical and $G$ is called acritical if none of its edges is critical.
Observe that $G$ is acritical precisely when 
$|N_S(j)|\ge 2$ for every stable set $S$ with $|S|=\alpha(G)$ and every $j\in V\setminus S$. For a subset $U\subseteq V$, $G[U]$ denotes the induced subgraph, with vertex set $U$ and edges the pairs $\{i,j\}\in E$ that are contained in $U$. 
For a vector $x\in \oR^n$  we let $\supp(x)=\{i\in [n]: x_i \ne 0\}$ denote the support of $x$. In addition, $e=(1,\ldots,1)^T$ denotes the all-ones vector, $\{e_1,\ldots,e_n\}$ denotes the standard unit basis of $\oR^n$, $I\in \mathcal S^n$ denotes the identity matrix  and  $J=ee^T\in \mathcal S^n$  the all-ones matrix. We also use the symbols $J_n$ and $J_{n,m}$ to denote the all-ones matrix of size $n\times n$ and $n\times m$, respectively.

\ignore{
\subsection{Something we dont want to forget}
\begin{theorem}\cite{scheiderer}
Let $p$ be a positive semideifnite form in 3 variables, then there exists $N=N(p)$ so that $(x^2+y^2+z^2)^Np(x,y,z)$ is a sum of squares. Indeed, $x^2+y^2+z^2$ can be replaced by any positive definite form.
\end{theorem}
Denote by $P_{n,m}$ the psd forms of degree $n$ in $m$ variables.
Reznick proved in \cite{Reznick_absence} that there is no single form $h$ such that $ph$ is SOS if $p\in P_{n,m}$ is positive semidefinite. Moreover, he show that there is not even a finite set of forms $H$ such that if $p\in P_{n,m}$ then $ph$ is SOS for some $h\in H$. In particular, the exponent in the previous theorem is unbounded.

\medskip
{\bf Another thing not to forget: } There are papers studying conditions ensuring that  the first level of Lasserre (aka the Shor relaxation) of quadratic programs is exact. Look at them and see what this means for our problem; this should be exactness of the bound $f^{(0)}_G$. See in particulier these papers:\\
- Burer, Ye: \verb1http://www.optimization-online.org/DB_HTML/2018/02/6461.html
\\
- Wang and Kilinc-Karzan: \verb1https://arxiv.org/abs/1911.09195
\\
- Locatelli: \verb1http://www.optimization-online.org/DB_HTML/2018/02/6461.html
\\
and references there.
}

\section{Preliminaries on polynomial optimization}\label{sec-Preliminaries}

Given polynomials $f$, $g_j$ for $j\in [m]$, and $h_i$  for $i \in [k]$,
consider the polynomial optimization problem:
\begin{align}\label{poly-opt}\tag{P}
f_{\min}=\inf\{f(x): \MoL{g_j(x)}\ge 0\ (j\in [m]), h_i(x)=0 \ (i \in [k])\}=\inf\{f(x): x\in K\},
\end{align}
setting $K=\{x\in\oR^n: g_j(x)\ge 0\ (j\in [m]), h_i(x)=0\ (i\in [k])\}$.
A well-known approach  for solving problem \eqref{poly-opt} is the Lasserre-Parrilo  approach,  which is based on using positivity certificates arising from suitable sums of squares representations for polynomials that are nonnegative over the feasible set $K$.  Such positivity certificates arise by considering the (truncated) quadratic module,  preordering and  ideal introduced in relations \eqref{eqQM}, \eqref{eqPO} and \eqref{eqideal}.
Set $g=(g_1,\ldots,g_m)$ and $h=(h_1,\ldots,h_k)$ for a short-hand, and    $\quadr(g)=\bigcup_{r\ge 0}\quadr(g)_r$,
$\langle h\rangle =\bigcup_{r\ge 0}\langle h\rangle _r$. Then $\MM(g)+\langle h\rangle $ is said to be {\em Archimedean} if the polynomial $R^2-\sum_{i=1}^{n}x_i^2$ belongs to $\mathcal{M}(g)+\langle h\rangle$ for some $R\in \mathbb{R}$. Note this implies that $K$ is compact. The following results by  Schm\"udgen \cite{Schmudgen} and Putinar \cite{Putinar} play a central role in polynomial optimization.

\begin{theorem} \label{theoPuSch}
Assume the feasible region $K$ of \eqref{poly-opt} is compact. Then any polynomial that is strictly positive on $K$ belongs to $\MT(g)+\langle h\rangle$ (Schm\"udgen \cite{Schmudgen}). If in addition $\MM(g)+\langle h\rangle$ is Archimedean, then any polynomial that is strictly positive on $K$ belongs to $\MM(g)+\langle h\rangle$ (Putinar \cite{Putinar}).
\end{theorem}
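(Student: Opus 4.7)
The plan is to prove Putinar's statement first by a Hahn--Banach separation combined with a representing-measure argument, and then deduce Schm\"udgen's statement from it by showing that the preordering $\MT(g)+\langle h\rangle$ is itself Archimedean whenever $K$ is compact.

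Suppose, toward a contradiction, that $f>0$ on $K$ but $f\notin \MM(g)+\langle h\rangle$. Since $\MM(g)+\langle h\rangle$ is a convex cone in $\oR[x]$ containing $1$, a Hahn--Banach separation argument yields a linear functional $L:\oR[x]\to\oR$ with $L\geq 0$ on $\MM(g)+\langle h\rangle$ and $L(f)\leq 0$; after rescaling one may take $L(1)=1$. The Archimedean assumption gives $R^{2}-\sum_{i}x_{i}^{2}\in\MM(g)+\langle h\rangle$ for some $R>0$, so $L\bigl(\sum_{i}x_{i}^{2}\bigr)\leq R^{2}$. Since $(p,q)\mapsto L(pq)$ is positive semidefinite on $\oR[x]$ (as squares lie in $\MM(g)$), a Cauchy--Schwarz induction on degree, combined with the Archimedean hypothesis applied to higher even powers such as $R^{2k}-\bigl(\sum_{i}x_{i}^{2}\bigr)^{k}$, yields a uniform bound $|L(x^{\alpha})|\leq R^{|\alpha|}$ for every monomial. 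Consequently $L$ is bounded in the sup-norm on the ball $B_{R}=\{x:\|x\|\leq R\}$; by Stone--Weierstrass it extends uniquely to a continuous functional on $C(B_{R})$, and the Riesz representation theorem provides a Borel probability measure $\mu$ on $B_{R}$ with $L(p)=\int p\,d\mu$ for every polynomial~$p$.

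Next one verifies that $\supp(\mu)\subseteq K$: from $L(p^{2}g_{j})\geq 0$ for every $p$ one deduces $g_{j}\geq 0$ $\mu$-a.e., and from $L(h_{i}^{2})=0$ (obtained by $L((h_i)\cdot h_i) = 0$, since $\langle h\rangle$ is a two-sided ideal inside $\MM(g)+\langle h\rangle$) one gets $h_{i}=0$ $\mu$-a.e. Hence $\mu$ is supported on $K$, which forces $L(f)=\int_{K}f\,d\mu>0$ because $f>0$ on $K$ and $\mu(K)=1$, contradicting $L(f)\leq 0$. This establishes Putinar's statement. For Schm\"udgen's statement, with only compactness of $K$ in hand, one proves (following W\"ormann) that $\MT(g)+\langle h\rangle$ is already Archimedean: fix $N$ with $N-\|x\|^{2}>0$ on $K$, and produce explicit preordering representations of each $N-x_{i}^{2}$ by partitioning $\oR^{n}$ according to sign patterns of the $g_{j}$'s and combining localized certificates, exploiting the products $\prod_{j\in J}g_{j}$ available in $\MT$ but not in $\MM$. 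Once Archimedeanness of the preordering is established, Schm\"udgen's conclusion reduces directly to Putinar's, applied to the preordering viewed as the quadratic module generated by all products $\prod_{j\in J}g_{j}$.

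The main obstacle I anticipate is the bootstrapping step that upgrades the single bound $L\bigl(\sum_{i}x_{i}^{2}\bigr)\leq R^{2}$ to the uniform estimate $|L(x^{\alpha})|\leq R^{|\alpha|}$: this requires iterated applications of Cauchy--Schwarz together with Archimedean certificates at every even degree, and is the essential input that allows one to pass from the algebraic setting of $\oR[x]$ to the analytic setting of $C(B_{R})$ and Riesz representation. The W\"ormann-style reduction of Schm\"udgen to Putinar is comparably delicate, since extracting Archimedean certificates for the coordinate functions inside the preordering demands a nontrivial combinatorial argument over sign regions of the constraints.
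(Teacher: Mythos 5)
The paper does not prove this statement; it is quoted as a pair of classical results with explicit references to Schm\"udgen and Putinar, so there is no in-paper argument to compare against. Your outline follows the standard modern route — Eidelheit/Hahn--Banach separation plus a moment-problem argument for Putinar, and W\"ormann's theorem (the preordering of a compact $K$ is Archimedean) to reduce Schm\"udgen to Putinar — and at the level of strategy it is sound. The separation step, the iterated Cauchy--Schwarz moment bounds (which do work, because a quadratic module is closed under multiplication by sums of squares, so $L\bigl(q^2(R^2-\sum_i x_i^2)\bigr)\ge 0$ for all $q$), the localization $\supp(\mu)\subseteq K$, and the final contradiction $L(f)=\int_K f\,d\mu>0$ are all correct.

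There is, however, one step whose justification fails as written: the passage from $|L(x^{\alpha})|\le R^{|\alpha|}$ to ``$L$ is bounded in the sup-norm on $B_R$, hence extends to $C(B_R)$ by Stone--Weierstrass and Riesz.'' The moment bounds control $L$ only with respect to a weighted $\ell_1$-norm on coefficients; a polynomial can have arbitrarily small sup-norm on $B_R$ while having enormous coefficients, so sup-norm continuity of $L$ on $\oR[x]$ does not follow, and the density argument cannot be invoked. (Repairing it by proving $L(p)\ge 0$ for every $p\ge 0$ on a ball would require a Positivstellensatz for the ball, i.e.\ the theorem being proved.) The standard non-circular fixes are: (a) the GNS construction — the inequality $L\bigl(q^2(R^2-\sum_i x_i^2)\bigr)\ge 0$ shows the multiplication operators $X_i$ on the GNS Hilbert space are commuting bounded self-adjoint operators with $\|X_i\|\le R$, and their joint spectral measure yields the representing measure on $[-R,R]^n$ — or (b) the Gelfand-theory argument on the $\ell_1$-type completion of $\oR[x]$. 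Two smaller points: the Hahn--Banach step should be justified via the fact that Archimedeanness makes $1$ an order unit, hence an algebraic interior point of $\MM(g)+\langle h\rangle$, so Eidelheit's theorem applies (and $L(1)>0$ must be checked via Cauchy--Schwarz before normalizing); and W\"ormann's theorem is not obtainable from the sign-pattern sketch alone — it rests on the Krivine--Stengle Positivstellensatz, which should be acknowledged as an external input rather than presented as a short combinatorial argument.
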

\noindent
Using  the truncated quadratic module and preordering leads to the  parameters:
\begin{align}
   \fr:= \sup\{ \lambda:  f-\lambda \in \quadr(g)_r+\langle h \rangle_{2r}\},\label{eqfr}\\
   \fr_{po}:= \sup\{ \lambda :  f-\lambda \in \mathcal{T}(g)_r+\langle h \rangle_{2r}\},\label{eqfrpo}
\end{align}
to which we will  refer as the {\em Lasserre hierarchy} (or the {\em sum-of-squares hierarchy}), sometimes adding the adjective `preordering-based' when referring to $\fr_{po}$. Clearly we have  $\fr \le \fr_{po}\leq f_{\min}$, $\fr\le f^{(r+1)}$  and $\fr_{po} \leq f^{(r+1)}_{po}$ for all $r$. 
As a direct application of Theorem \ref{theoPuSch}, the parameters $\fr_{po}$ converge asymptotically to $f_{\min}$ when $K$ is compact, while the (possibly weaker) parameters $\fr$ also converge asymptotically to $f_{\min}$ under the Archimedean condition.
 We are interested in problems for which the Lasserre hierarchy has {\em finite} convergence. We say the  parameters $\fr$  have {\em finite convergence} if $\fr=f_{\min}$ for some $r\in \mathbb{N}$;  analogously for the parameters $\fr_{po}$. 
 
\medskip In order to prove finite convergence of the Lasserre hierarchy for some special classes of polynomial optimization problems, we will use a result of Nie \cite{Nie}, which relies on the optimality conditions for nonlinear optimization. So we start with a quick recap on these optimality conditions, which we state here for problem \eqref{poly-opt} though they hold in a more general setting (see, e.g.,  \cite{Bertsekas/99}).

Let $u$ be a local minimizer of problem (\ref{poly-opt}) and let $J(u)=\{j\in[m]\text{ : } g_j(u)=0\}$ be the index set of the active inequality constraints at $u$.
We say that the {\textit{constraint qualification condition (CQC)}} holds at $u$ if the gradients of the active constraints at $u$ are linearly independent:
\begin{equation}\label{eqCQC}\tag{CQC}
\text{The vectors in }\{\nabla g_j(u): j\in J(u)\}\cup \{ \nabla h_i(u): i\in [k]\} \text{ are linearly independent.}
\end{equation}
If \eqref{eqCQC} holds at $u$ then there exist Lagrange multipliers $\lambda_1, \dots, \lambda_k, \mu_1, \dots, \mu_m\in\oR$  satisfying
\begin{align}\label{eqFOOC} 
  \nabla f(u)=\sum_{i=1}^{k}\lambda_i\nabla h_i(u)+\sum_{j=1}^{m}\mu_j\nabla g_j(u),\tag{FOOC}\\
 \label{eqCC}   \mu_1g_1(u)=0,\dots, \mu_m g_m(u)=0, \mu_1\geq 0, \dots \mu_m\geq 0.\tag{CC}
\end{align}
The condition (FOOC) is known as  the {\em first order optimality condition} and (CC) as the {\em complementarity condition}. If  it holds that 
\begin{align}\label{eqSCC}\tag{SCC}
    \mu_j> 0 \text{ for every } j\in J(u), \quad \mu_j=0 \text{ for } j\in [m]\setminus J(u),
\end{align}
then we say that the {\textit{strict complementarity condition }}(SCC) holds at $u$.
Define the Lagrangian function 
$$L(x)= f(x)-\sum_{i=1}^{k}\lambda_ih_i(x)-\sum_{j\in J(u)}\mu_jg_j(x).$$
Another necessary condition for $u$ to be a local minimizer is the {\textit{second order necessity condition}} (SONC):  
\begin{align}\label{eqSONC}\tag{SONC}
    v^T\nabla^2L(u)v\geq 0 \text{ for all } v\in G(u)^{\perp},
\end{align}
where $G(u)$ is the matrix with rows the gradients of the active constraints at $u$ and $G(u)^\perp$ is its kernel:
$$G(u)^{\perp}=\{x\in \oR^n: x^T\nabla g_j(u)=0 \text{ for all  } j\in J(u) \text{ and }  x^T\nabla h_i(u)=0 \text{ for all } i\in [k]\}.$$
 If  it holds that 
\begin{align}\label{eqSOSC}\tag{SOSC}
    v^T\nabla^2L(u)v> 0 \text{ for all } 0\neq v\in G(u)^{\perp},
\end{align}
then we say that the \textit{second order sufficiency condition}  (SOSC) holds at $u$. The relations between these optimality conditions and the local minimizers are summarized in the following classical result.

\begin{theorem}[see, e.g.,  \cite{Bertsekas/99}]\label{theo-local}
    Let $u$ be a feasible solution of problem \eqref{poly-opt}.
\begin{description} 
    \item[(i)] Assume $u$ is a local minimizer of \eqref{poly-opt} and (\ref{eqCQC}) holds at $u$. Then the conditions \eqref{eqFOOC}, \eqref{eqCC} and \eqref{eqSONC} hold at $u$.
    \item[(ii)]  Assume that   (\ref{eqFOOC}), (\ref{eqSCC}) and (\ref{eqSOSC}) hold at $u$. Then $u$ is a strict local minimizer of \eqref{poly-opt}.
\end{description} 
\end{theorem}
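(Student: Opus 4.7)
\textbf{Proof plan for Theorem \ref{theo-local}.}

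This is a classical KKT-type statement, so my plan is to follow the standard route via the implicit function theorem and Farkas' lemma (for (i)) and a contradiction argument based on second-order Taylor expansion (for (ii)).

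For part (i), the plan is to first characterize the linearized cone of feasible directions at $u$, namely
\[
D(u) = \{d\in \oR^n : \nabla g_j(u)^T d \geq 0 \text{ for } j\in J(u),\ \nabla h_i(u)^T d = 0 \text{ for } i\in [k]\}.
\]
Under \eqref{eqCQC}, the Lyusternik/implicit function theorem applies: for each $d\in D(u)$ there exists a smooth feasible arc $\gamma:[0,\varepsilon)\to K$ with $\gamma(0)=u$ and $\gamma'(0)=d$. Since $u$ is a local minimizer, $\nabla f(u)^T d \geq 0$ for all $d\in D(u)$. Farkas' lemma (applied to the cone description of $D(u)$) then yields nonnegative multipliers $\mu_j$ for $j\in J(u)$ and real $\lambda_i$ such that \eqref{eqFOOC} holds; setting $\mu_j=0$ for $j\notin J(u)$ gives \eqref{eqCC}. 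For \eqref{eqSONC}, I would take $v\in G(u)^\perp$, build a feasible arc $\gamma$ with $\gamma'(0)=v$ (again by implicit function theorem under CQC, pushing to second order by absorbing the curvature into an auxiliary direction), and expand $f(\gamma(t))$ up to order $t^2$. Using \eqref{eqFOOC}, \eqref{eqCC} and the fact that $h_i(\gamma(t))=0$ and $g_j(\gamma(t))\geq 0$ for active $j$, the first-order contribution vanishes and the sign of the $t^2$ coefficient is exactly $v^T \nabla^2 L(u) v$; local minimality forces this to be nonnegative.

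For part (ii), I would argue by contradiction. Suppose $u$ is not a strict local minimizer. Then there exists a sequence of feasible points $u_\ell \to u$ with $u_\ell \ne u$ and $f(u_\ell) \leq f(u)$. Let $d_\ell = (u_\ell-u)/\|u_\ell-u\|$ and, passing to a subsequence, assume $d_\ell \to d$ with $\|d\|=1$. The first step is to show $d \in G(u)^\perp$. For equality constraints, $h_i(u_\ell)=h_i(u)=0$ gives $\nabla h_i(u)^T d = 0$ by a first-order expansion. For each active $j\in J(u)$, $g_j(u_\ell)\geq 0 = g_j(u)$ yields $\nabla g_j(u)^T d \geq 0$. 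Now use \eqref{eqFOOC} together with $f(u_\ell)\leq f(u)$ to write
\[
0 \geq f(u_\ell)-f(u) = \nabla L(u)^T(u_\ell - u) + \sum_{i}\lambda_i\bigl(h_i(u_\ell)-h_i(u)\bigr) + \sum_{j\in J(u)}\mu_j\bigl(g_j(u_\ell)-g_j(u)\bigr) + o(\|u_\ell-u\|),
\]
and since $\nabla L(u)=0$ and the $h_i$ terms vanish, divide by $\|u_\ell-u\|$ and take the limit to get $\sum_{j\in J(u)}\mu_j \nabla g_j(u)^T d \leq 0$. Combined with $\mu_j>0$ from \eqref{eqSCC} and $\nabla g_j(u)^T d\geq 0$, this forces $\nabla g_j(u)^T d = 0$ for all $j\in J(u)$, giving $d\in G(u)^\perp$.

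The final step is the second-order expansion: using $L(u_\ell)=f(u_\ell)-\sum_i\lambda_i h_i(u_\ell)-\sum_{j\in J(u)}\mu_j g_j(u_\ell)\leq f(u)-\sum_{j\in J(u)}\mu_j g_j(u_\ell) \leq f(u) = L(u)$ (since $\mu_j\geq 0$ and $g_j(u_\ell)\geq 0$), expand $L$ to second order around $u$ using $\nabla L(u)=0$:
\[
0 \geq L(u_\ell)-L(u) = \tfrac{1}{2}(u_\ell-u)^T \nabla^2 L(u)(u_\ell-u) + o(\|u_\ell-u\|^2).
\]
Dividing by $\|u_\ell-u\|^2$ and taking the limit contradicts \eqref{eqSOSC} applied to $0\neq d\in G(u)^\perp$.

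The main delicate point is handling the active inequality constraints in part (ii): one must carefully exploit \eqref{eqSCC} to turn the one-sided bound $\nabla g_j(u)^T d \geq 0$ into the equality needed to place $d$ in $G(u)^\perp$. Everything else is a routine Taylor expansion, and the CQC in part (i) is what makes the implicit function step work and Farkas' lemma directly applicable.
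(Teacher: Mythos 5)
The paper does not actually prove Theorem~\ref{theo-local}; it is stated as a classical result with a pointer to \cite{Bertsekas/99}, so there is no in-paper argument to compare against. Your proposal is the standard textbook proof and is essentially correct. Part (ii) — the only part the paper actually uses, in the implication (iii) $\Rightarrow$ (i) of Proposition~\ref{prop-strict_local_ch} — is carried out cleanly: the way you exploit \eqref{eqSCC} to upgrade $\nabla g_j(u)^Td\ge 0$ to $\nabla g_j(u)^Td=0$ and place the limiting direction $d$ in $G(u)^\perp$, and the observation $L(u_\ell)\le f(u_\ell)\le f(u)=L(u)$ that feeds the second-order expansion, are exactly the right steps, and correctly avoid any constraint qualification in (ii).

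The one place in part (i) that needs a touch more care is the \eqref{eqSONC} step. If the feasible arc $\gamma$ with $\gamma'(0)=v$ only satisfies $g_j(\gamma(t))\ge 0$ for active $j$, then $L(\gamma(t))=f(\gamma(t))-\sum_{j\in J(u)}\mu_j g_j(\gamma(t))\le f(\gamma(t))$, which is the wrong direction: since $\nabla g_j(u)^Tv=0$ the terms $g_j(\gamma(t))$ are $O(t^2)$ with an uncontrolled sign, so the $t^2$ coefficient of $f(\gamma(t))-f(u)$ is \emph{not} simply $\tfrac12 v^T\nabla^2L(u)v$. The standard fix, which \eqref{eqCQC} makes available, is to build $\gamma$ on the active-constraint manifold $\{x: h_i(x)=0\ (i\in[k]),\ g_j(x)=0\ (j\in J(u))\}$, so that $L\equiv f$ along the arc and the expansion closes; the inactive constraints stay strictly positive near $u$ by continuity. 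This is a minor repair of a classical argument rather than a conceptual gap.
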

The relation between the optimality conditions for problem \eqref{poly-opt} and finite convergence of the parameters $\fr$ is given by the following result of Nie \cite{Nie}. 

\begin{theorem}[Nie \cite{Nie}]\label{theo-Nie}
Consider problem \eqref{poly-opt} and the parameters $f^{(r)}$ from (\ref{eqfr}).
Assume  that  the Archimedean condition holds, i.e., $R^2-\sum_{i=1}^nx_i^2\in \MM(g)+\langle h\rangle$ for some $R\in\oR$,  and that the constraint qualification \eqref{eqCQC}, strict complementary \eqref{eqSCC} and second order sufficency \eqref{eqSOSC} conditions hold at every global minimizer of \eqref{poly-opt}. Then  Lasserre's hierarchy $\fr$ has finite convergence, i.e., we have $f^{(r)}=f_{\min}$  for some $r\in \mathbb{N}$.
\end{theorem}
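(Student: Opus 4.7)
The plan is to derive this theorem from Marshall's Boundary Hessian Condition (BHC) theorem cited in the introduction: if $\mathcal{M}(g)+\langle h\rangle$ is Archimedean and $p\geq 0$ on $K$ satisfies the BHC at every zero of $p$ on $K$, then $p\in \mathcal{M}(g)+\langle h\rangle$. Applying this to $p=f-f_{\min}$ yields a representation $f-f_{\min}=\sum_j\sigma_j g_j+\sum_i u_i h_i$ for some $\sigma_j\in\Sigma$ and $u_i\in\mathbb{R}[x]$, and truncating at the largest degree occurring on the right-hand side gives $f^{(r)}=f_{\min}$ for that $r$.

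The core of the argument is to check that the BHC holds at each global minimizer $u$ under the hypotheses \eqref{eqCQC}, \eqref{eqSCC}, \eqref{eqSOSC}. First I would invoke the implicit function theorem: since \eqref{eqCQC} says the gradients $\{\nabla g_j(u):j\in J(u)\}\cup\{\nabla h_i(u):i\in[k]\}$ are linearly independent, there is a local analytic change of coordinates $y=\varphi^{-1}(x)$ near $u$ in which $g_j=y_j$ for $j\in J(u)$, $h_i=y_{|J(u)|+i}$ for $i\in[k]$, and the remaining coordinates $y_\ell$ are free. In these coordinates $K$ is locally $\{y_j\geq 0\ (j\in J(u)),\ y_{|J(u)|+i}=0\ (i\in[k])\}$. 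Next I would expand $f\circ\varphi-f_{\min}$ around the origin. By the first-order condition \eqref{eqFOOC} its linear part equals $\sum_{j\in J(u)}\mu_j y_j$ modulo the ideal generated by the $h_i\circ\varphi$, and \eqref{eqSCC} gives $\mu_j>0$ for every $j\in J(u)$. Modulo this linear part and the ideal, the leading quadratic contribution in the free variables $y_\ell$ is the restriction of the Lagrangian Hessian $\nabla^2 L(u)$ to $G(u)^\perp$, which is positive definite by \eqref{eqSOSC}. This is precisely the BHC at $u$.

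The step I expect to be the main obstacle is controlling the set of global minimizers jointly. The hypotheses need to force it to be finite, which they do: by Theorem~\ref{theo-local}(ii), \eqref{eqSCC}+\eqref{eqSOSC} make every global minimizer a strict local minimizer, and compactness of $K$ (from the Archimedean assumption) then reduces this set to finitely many isolated points. Marshall's theorem then produces a single global certificate by combining the local BHC representation at each minimizer with Putinar's Positivstellensatz (Theorem~\ref{theoPuSch}) on neighbourhoods where $f-f_{\min}$ is strictly positive; this gluing step, while standard once the local analysis is in place, is the heart of the proof and the least routine part of the plan.
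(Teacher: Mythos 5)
The paper does not prove this statement; it is quoted verbatim as a result of Nie \cite{Nie}, with the introduction only remarking that Nie's argument consists in verifying Marshall's Boundary Hessian Condition at each global minimizer and then invoking \cite[Theorem 1.3]{Marshall2009}. Your sketch reconstructs exactly that argument --- local coordinates from \eqref{eqCQC}, a positive linear part from \eqref{eqFOOC} and \eqref{eqSCC}, a positive definite Hessian on $G(u)^{\perp}$ from \eqref{eqSOSC}, hence (BHC), then Marshall's theorem and a degree truncation --- and also correctly notes (as the paper does immediately after the theorem) that the hypotheses force finitely many, isolated global minimizers. This matches the cited source's approach, so there is nothing to compare beyond confirming the outline is sound.
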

Note that, under the assumptions of Theorem \ref{theo-Nie}, all global minimizers of (\ref{poly-opt}) are {\em strict} minimizers (by Theorem \ref{theo-local} (ii)) and thus (\ref{poly-opt}) has {\em finitely many} global minimizers.
\MoL{(For if not, there exists a  sequence $(x_i)_i\subseteq K$, where all $x_i$ are global minimizers of $f$ over $K$. Under the Archimedean condition $K$ is compact and thus this sequence has an accumulation point $x^*\in K$. Then $x^*$  is also a global minimizer, but it is not a strict minimizer, yielding a contradiction.)}

\section{Links between the various  hierarchies}\label{section-link}

In this section we prove relation (\ref{eqlink1}), which establishes links between the various hierarchies of bounds $\vartheta^{(r)}(G)$, $f^{(r)}_G$ $f^{(r)}_{G,po}$ and $F^{(r)}_G$ from relations (\ref{eqthetar}), (\ref{eqlassimplex}), (\ref{eqlaspo}) and (\ref{eqlassphere}).  We start with establishing these links   in the more general setting   of standard quadratic programs. 

\subsection{Links between the hierarchies for standard quadratic programs}\label{secSQP}

Given a symmetric matrix $M\in \MS^n$, recall the polynomials  $p_M(x)=x^TMx$ and $P_M(x)=p_M(x^{\circ 2})$ from \eqref{eqpPM}. We consider the 
following {\em standard quadratic optimization problem}:
\begin{align}\label{quadr-simplex}
p_{\min} =\min\Big\{p_M(x): x\in \Delta_n\Big\},
\end{align} 
which can  be equivalently reformulated as optimization over the unit sphere:
\begin{align}\label{quadr-sphere}
    p_{\min}=\min \Big\{ P_M(x): x\in \oR^n, \sum_{i=1}^{n}x_i^2=1\Big\}.
\end{align}
In analogy to definitions  (\ref{eqlassimplex}), (\ref{eqlaspo}) and (\ref{eqlassphere}) we can define  the corresponding sum-of-squares hierarchies for both problems (\ref{quadr-simplex}) and (\ref{quadr-sphere}), and the preordering-based  hierarchy for the simplex formulation (\ref{quadr-simplex}), leading to the parameters
\begin{align}
    p_M^{(r)}=\max \Big\{\lambda: p_M - \lambda \in \quadr(x_1, x_2, \dots, x_n)_r +  \Big\langle\sum_{i=1}^{n} x_i-1\Big\rangle_{2r}\Big\},\label{las-simplex-general} \\
    p_{M,po}^{(r)}=\max\Big\{\lambda: p_M-\lambda\in \mathcal{T}(x_1,x_2,\dots, x_n)_r + \Big \langle \sum_{i=1}^{n}x_i-1\Big\rangle_{2r}\Big\},\label{las-preo-general}\\
    P_M^{(r)}=\max\Big\{\lambda: P_M-\lambda \in \Sigma_r +\Big\langle\sum_{i=1}^{n}x_i^2-1\Big\rangle_{2r}\Big\}
    \label{las-sphere-general}
    \end{align}
for any integer $r\ge 1$. 
Observe that  we are in the Archimedean setting and that the above programs are feasible for any $r\ge 2$. To see  this  one can use  the following identities: 
for any $i\in [n]$,
$$1-x_i= 1-\sum_{k=1}^n x_k +\sum_{k\in[n]\setminus \{i\}}x_k, \quad
1-x_i^2 = {(1+x_i)^2\over 2}(1-x_i) + {(1-x_i)^2\over 2}(1+x_i).
$$
This implies $n-\sum_ix_i^2 \in \mathcal{M}(x_1,\ldots,x_n)_2 +\langle 1-\sum_i x_i\rangle_4$, thus showing the Archimedean condition holds. We next verify feasibility of the programs.  If $M\succeq 0$ then the polynomial $p_M$ belongs to $ \Sigma_1$ and thus the programs defining $p_M^{(1)}, p^{(1)}_{M,po}, P^{(2)}_M$ are feasible. Otherwise,  $\mu:=\lambda_{\min}(M)<0$ and $p_M(x)-n\mu= x^T(M-\mu I)x -\mu(n-\sum_i x_i^2)$, which shows feasibility of the programs defining $p_M^{(r)}, p_{M,po}^{(r)}, P_M^{(r)}$ for $r\ge 2$. In addition note that $p^{(1)}_{M,po}$ is  finite when  $M$ is entry-wise nonnegative.
Observe also that   the optimum is attained in the above programs since  the search region for $p-\lambda$ is a closed set (see \cite{Marshall2003}).

Now, we characterize the set of matrices $M$ for which the program (\ref{las-simplex-general}) is feasible at order $r=1$. Moreover, we prove that in that case the program is exact, i.e.,  $p_M^{(1)}=p_{\min}$.

\begin{lemma}\label{ch-qp-level-1}
Given a symmetric matrix $M\in \mathcal S^n$, the following assertions are equivalent.
	\begin{description}
	\item[(i)] 	The program (\ref{las-simplex-general}) is feasible for $r=1$, i.e., $p_M^{(1)}$ is finite.
	\item[(ii)]  There exist $\lambda \in \mathbb{R}$ and $a\in \mathbb{R}^n_{+}$ such that 
		$M-\lambda J-(ae^T+ea^T)/2 \succeq 0.$
		\item[(iii)] $p^{(1)}_M=p_{\min}$.
		\end{description}
	\end{lemma}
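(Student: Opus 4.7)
The plan is to establish (i) $\Leftrightarrow$ (ii) as a translation between the sum-of-squares certificate and a structured PSD condition, to derive (iii) from (ii) by evaluating at a global minimizer, and to note that (iii) $\Rightarrow$ (i) is immediate since $p_{\min}$ is finite on the compact simplex.

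For (i) $\Rightarrow$ (ii), I would first unpack membership in $\quadr(x_1,\ldots,x_n)_1 + \langle e^Tx - 1\rangle_2$. Because $\deg(\sigma_i x_i) \le 2$ forces each $\sigma_i$ to have degree $0$ (nonzero SOS polynomials have even degree), the multipliers of the $x_i$ reduce to nonnegative constants $a_i$. Thus (i) is equivalent to the existence of $\sigma_0 \in \Sigma_1$, $a \in \oR_+^n$, and $u \in \oR[x]_1$ with
$$p_M(x) - \lambda \;=\; \sigma_0(x) + a^T x + u(x)(e^T x - 1).$$
Writing $\sigma_0(x) = \tilde x^T Q \tilde x$ with $\tilde x = (1, x_1, \ldots, x_n)^T$ and Gram matrix $Q = \bigl(\begin{smallmatrix} c & b^T \\ b & C\end{smallmatrix}\bigr) \succeq 0$, and $u(x) = \alpha + \beta^T x$, matching coefficients in degrees $0$, $1$, $2$ yields $c = \alpha - \lambda$, $\beta = 2b + a + \alpha e$, and after substitution
$$M - \lambda J - \tfrac{ae^T + ea^T}{2} \;=\; C + be^T + eb^T + cJ.$$
The PSD-ness of the right-hand side is obtained by evaluating the inequality $\tilde z^T Q \tilde z \ge 0$ at the specific vector $\tilde z = (e^T y, y^T)^T$, which yields exactly $y^T(C + be^T + eb^T + cJ) y \ge 0$ for every $y \in \oR^n$.

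For (ii) $\Rightarrow$ (i), given $N := M - \lambda J - \tfrac{ae^T + ea^T}{2} \succeq 0$, I would verify the polynomial identity
$$p_M(x) - \lambda \;=\; x^T N x + a^T x + \bigl[\lambda(e^T x + 1) + a^T x\bigr](e^T x - 1),$$
which follows from the elementary reductions $(e^T x)^2 = 1 + (e^T x - 1)(e^T x + 1)$ and $(a^T x)(e^T x) = a^T x + (a^T x)(e^T x - 1)$ applied to the expansion $p_M(x) = \lambda (e^T x)^2 + (a^T x)(e^T x) + x^T N x$. Since $x^T N x \in \Sigma_1$, each $a_i \ge 0$, and $u(x) := \lambda(e^T x + 1) + a^T x$ has degree $\le 1$, this exhibits $p_M - \lambda$ in $\quadr(x_1,\ldots,x_n)_1 + \langle e^T x - 1\rangle_2$, so $p_M^{(1)}$ is finite. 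For (ii) $\Rightarrow$ (iii), I would evaluate the PSD inequality of (ii) at any minimizer $x^* \in \Delta_n$ of $p_M$, which gives $p_{\min} = (x^*)^T M x^* \ge \lambda + a^T x^* \ge \lambda$ since $x^*, a \ge 0$. As the feasible set for $\lambda$ in (i) is closed (cited just before the lemma), the supremum is attained, and taking the optimal $\lambda$ gives $p_{\min} \ge p_M^{(1)}$; combined with the trivial $p_M^{(1)} \le p_{\min}$, equality holds. The direction (iii) $\Rightarrow$ (i) is immediate.

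The main obstacle is the PSD verification in (i) $\Rightarrow$ (ii): the inequality $y^T(C + be^T + eb^T + cJ) y \ge 0$ does not follow from $C \succeq 0$ alone and crucially relies on evaluating the full Gram matrix $Q$ at the coupled vector $(e^T y, y^T)^T$, which links the corner block $c$, the off-diagonal $b$, and the principal block $C$ through the all-ones vector. Everything else reduces to routine polynomial bookkeeping and evaluation at a feasible point.
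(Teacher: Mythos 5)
Your treatment of (i) $\Leftrightarrow$ (ii) is correct and is essentially the paper's argument made explicit: the paper homogenizes modulo $\langle e^Tx-1\rangle$ so that the degree-$\le 2$ SOS term becomes a homogeneous form $x^TQx$ with $Q\succeq 0$, and then identifies quadratic forms that agree on the hyperplane $e^Tx=1$; your coefficient matching with the bordered Gram matrix and the evaluation of $\tilde z^TQ\tilde z$ at $\tilde z=(e^Ty,y^T)^T$ accomplishes the same thing. The verification of (ii) $\Rightarrow$ (i) is also fine.

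However, your argument for (ii) $\Rightarrow$ (iii) has a genuine gap: it proves nothing beyond the trivial inequality. Evaluating the PSD condition at a minimizer $x^*\in\Delta_n$ gives $p_{\min}\ge\lambda$ for every feasible $\lambda$, hence $p_{\min}\ge p_M^{(1)}$ --- but this is literally the same statement as ``the trivial $p_M^{(1)}\le p_{\min}$'' that you then propose to combine it with. The content of (iii) is the reverse inequality $p_M^{(1)}\ge p_{\min}$, i.e., that the level-one relaxation is \emph{exact}, and nothing in your proposal addresses it: knowing that some $\lambda_0$ is feasible and that the supremum is attained does not tell you the supremum reaches $p_{\min}$. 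The paper closes this gap with an SDP duality argument: once (ii) holds, $p_M^{(1)}$ equals the SDP $\sup\{\lambda:\ M-\lambda J-\sum_i a_iA_i\succeq 0,\ a\ge 0\}$ with $A_i=(e_ie^T+ee_i^T)/2$; its dual is $\inf\{\langle M,X\rangle:\ \langle J,X\rangle=1,\ \langle A_i,X\rangle\ge 0,\ X\succeq 0\}$, which is strictly feasible, so strong duality holds. For any dual-feasible $X$ one sets $x=Xe$, checks $x\in\Delta_n$ (since $x_i=\langle A_i,X\rangle\ge 0$ and $e^Tx=\langle J,X\rangle=1$) and $X-xx^T\succeq 0$ (Schur complement), and then pairs $X-xx^T$ with a primal-feasible matrix to get $\langle M,X\rangle\ge x^TMx\ge p_{\min}$, whence $p_M^{(1)}\ge p_{\min}$. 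Some argument of this kind (or an explicit construction of a representation of $p_M-p_{\min}$) is indispensable; without it the equivalence with (iii) is not established.
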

	
	\begin{proof}
		We first prove (i) $\Longleftrightarrow$ (ii). Assume program (\ref{las-simplex-general}) is feasible, i.e.,   there exist $\lambda\in \mathbb{R},$ $a\in \mathbb{R}^n_{+}$, $Q\succeq 0$ and $u(x)\in \mathbb{R}[x]$ such that  
		$$ x^TMx-\lambda=x^TQx + a^Tx + (e^Tx-1)u(x).$$
		Then there exists $v(x)\in \mathbb{R}[x]$ such that   
		$$ x^TMx-\lambda (e^Tx)^2=x^TQx + (a^Tx)(e^Tx) + (e^Tx-1)v(x).$$
\MoL{		Indeed, we can select $v(x)=u(x)-\lambda(1+e^Tx)$, which follows from
$$x^TMx-\lambda (e^Tx)^2= x^TMx -\lambda +\lambda(1-(e^Tx)^2) = x^TQx+a^Tx +(e^Tx-1)(u(x)-\lambda(1+e^Tx)).$$}
		Hence the quadratic polynomial  $x^T(M-\lambda J -Q-(ae^T+ea^T)/2)x$  vanishes on $\{x: e^Tx=1\}$ and thus on $\oR^n$, which implies $M-\lambda J -Q-(ae^T+ea^T)/2=0$ and thus (ii) holds. The argument can be clearly reversed, which shows the equivalence of (i) and (ii). 
	
As (iii) implies (i) it 		suffices now to show  (ii) $\Longrightarrow$ (iii).
By the above argument, if (ii) holds then we have
\begin{align}\label{programp}
		p_M^{(1)}= \sup\{ \lambda: \lambda\in \oR, a\in \oR^n_+, \ M-\lambda J-(ae^T+ea^T)/2\succeq  0 \}.
		\end{align}		
Define the matrices $A_i=(e_ie^T+ee_i^T)/2$ for $i\in [n]$. Then the  dual program of (\ref{programp})  reads
\begin{equation}\label{programd}
\inf\{\langle M,X\rangle: \langle J,X\rangle =1, \langle A_i,X\rangle \ge 0 \ (i\in [n]), X\succeq 0\}.
\end{equation}
As program (\ref{programd}) is strictly feasible and bounded from below by $p_M^{(1)}$, strong duality holds and the optimum value of (\ref{programd}) is equal to $p^{(1)}_M$. 
We now show that $p_{\min}\le p^{(1)}_M$.
For this  let $X$ be feasible for (\ref{programd}) and define the vector $x=Xe$. Then $x\in \Delta_n$ since $x_i=\langle A_i,X\rangle \ge 0$ for all $i\in [n]$,  and $e^Tx=\langle J,X\rangle=1$, which implies 
$x^TMx\ge p_{\min}$. 
In addition,   we have $X-xx^T\succeq 0$, which follows from the fact that
$$\left(\begin{matrix}1 & x^T\cr x & X\end{matrix}\right)\succeq 0,$$ 
(as $X\succeq 0$, $x=Xe$ and $e^TXe=1$).
Consider also a feasible solution $(\lambda, a)$ to  (\ref{programp}), so that
$M-\lambda J-\sum_{i=1}^n a_i A_i \succeq 0$. Then we have $\langle M-\lambda J-\sum_i a_iA_i, X-xx^T\rangle \ge 0$ which, combined with $\langle J, X-xx^T\rangle =0$ and $\langle A_i, X-xx^T\rangle =0$ for all $i\in [n]$, implies that 
$\langle M,X\rangle \ge x^TMx\ge p_{\min}$ and thus $p^{(1)}_M \ge p_{\min}$, as desired.
	\end{proof}

Here is an immediate consequence of the reformulation of the parameter $p^{(1)}_M$ given in (\ref{programp}), that we will need later.
\begin{lemma}\label{lem-finite-pM1}
Assume that the program (\ref{programp}) defining $p^{(1)}_M$ is feasible, i.e., $M=\lambda J+Q+(ae^T+ea^T)/2$ for some $\lambda \in \oR$, $Q\succeq 0$ and $a\in \oR^n_+$.
Then, for any $i\ne j\in [n]$, we have
$M_{ii}+M_{jj}-2M_{ij}=Q_{ii}+Q_{jj}-2Q_{ij}\ge 0$.
In addition, if $M_{ii}+M_{jj}-2M_{ij}=0$ then $Q(e_i-e_j)=0$.
\end{lemma}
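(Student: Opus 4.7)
The plan is essentially a direct algebraic verification exploiting the decomposition $M=\lambda J+Q+(ae^T+ea^T)/2$. First I would write out the entries of $M$ explicitly: for any $k,\ell\in[n]$,
\[
M_{k\ell}=\lambda+Q_{k\ell}+\tfrac{1}{2}(a_k+a_\ell).
\]
Then, for fixed $i\ne j$, I would substitute these into the combination $M_{ii}+M_{jj}-2M_{ij}$ and observe that the $\lambda$ contributions cancel (since $\lambda+\lambda-2\lambda=0$) and the $a$ contributions also cancel (since $a_i+a_j-(a_i+a_j)=0$). What remains is exactly $Q_{ii}+Q_{jj}-2Q_{ij}$, which is the desired first identity.

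Next, I would rewrite this quantity as a quadratic form: $Q_{ii}+Q_{jj}-2Q_{ij}=(e_i-e_j)^T Q(e_i-e_j)$. Since $Q\succeq 0$, this is nonnegative, establishing the inequality $M_{ii}+M_{jj}-2M_{ij}\ge 0$.

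For the ``in addition'' part, I would invoke the standard fact about positive semidefinite matrices: if $Q\succeq 0$ and $v^TQv=0$, then $Qv=0$. This follows, e.g., by writing $Q=B^TB$ and noting that $v^TQv=\|Bv\|^2=0$ forces $Bv=0$, hence $Qv=B^TBv=0$. Applying this with $v=e_i-e_j$ gives $Q(e_i-e_j)=0$ whenever $M_{ii}+M_{jj}-2M_{ij}=0$.

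There is no real obstacle here: the lemma is a one-line computation together with a standard property of PSD matrices. The only thing worth emphasizing in the write-up is why the diagonal and off-diagonal contributions of $\lambda J$ and $(ae^T+ea^T)/2$ cancel in this specific combination, which is precisely what makes the identity $M_{ii}+M_{jj}-2M_{ij}=Q_{ii}+Q_{jj}-2Q_{ij}$ a useful tool for later arguments.
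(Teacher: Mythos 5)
Your proof is correct and is exactly the direct verification the paper has in mind (the paper's own proof is just the phrase ``Direct verification''). The cancellation of the $\lambda J$ and $(ae^T+ea^T)/2$ contributions, the identification of $Q_{ii}+Q_{jj}-2Q_{ij}$ with $(e_i-e_j)^TQ(e_i-e_j)\ge 0$, and the standard PSD fact that $v^TQv=0$ forces $Qv=0$ are all that is needed.
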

\begin{proof} Direct verification.
\end{proof}

Alternatively, following \cite{BDKRQ,dKP2002},  problem  (\ref{quadr-simplex}) can be reformulated as a copositive program:
\begin{align}\label{quadr-copositive}
    p_{\min} =\max\Big\{\lambda : M-\lambda J\in \COP_n\Big\}.
\end{align} 
By replacing the cone $\COP_n$ by its subcone $\MK^{(r)}_n$ we now obtain the following lower bound for $p_{\min}$:
\begin{align}\label{relax-copositive}
   \Theta_M^{(r)} :=\max\Big\{\lambda : M-\lambda J \in \MK^{(r)}_n\Big\}
\end{align} 
for any integer $r\ge 0$. Note that  $\lambda=\min_{i,j}M_{ij}$ provides a feasible solution for (\ref{relax-copositive}) since then $M-\lambda J$ belongs to $\mathcal K_n^{(0)}$. We begin with the following easy relationships among the above  parameters.

\begin{lemma}\label{sim-sph}
    For all $r\ge 1$  we have: $\max\{p^{(r)}_M, p^{(r)}_{M,po},P^{(r)}_M, \Theta^{(r)}_M\}  \le p_{\min}$ and $p_M^{(r)}\leq \min\{ P_M^{(2r)}, p_{M,po}^{(r)}\}$.
\end{lemma}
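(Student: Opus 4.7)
The plan is to verify each inequality separately, grouping them by the technique used. All four upper bounds by $p_{\min}$ come from the same idea, namely that each certificate is a concrete sum-of-squares witness of nonnegativity on the appropriate feasible set, and the two comparisons in the minimum exploit known inclusions between the various positivity cones.

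\textbf{Upper bounds by $p_{\min}$.} For any feasible $\lambda$ in the program defining $p_M^{(r)}$, the polynomial $p_M-\lambda$ is a sum $\sigma_0+\sum_{j=1}^n\sigma_j x_j + u(x)\bigl(1-\sum_i x_i\bigr)$ with $\sigma_j\in\Sigma$. Evaluating at any $x\in\Delta_n$ kills the ideal term and leaves a nonnegative quantity, so $p_M(x)\ge\lambda$ on $\Delta_n$, hence $\lambda\le p_{\min}$. The same argument works verbatim for $p_{M,po}^{(r)}$ since elements of the preordering are also nonnegative on $\Delta_n$. For $P_M^{(r)}$, any feasible $\lambda$ satisfies $P_M(x)\ge\lambda$ on the unit sphere $\{x:\sum_i x_i^2=1\}$, and the substitution $x_i\mapsto\sqrt{y_i}$ with $y\in\Delta_n$ shows $\min_{\|x\|=1}P_M(x)=p_{\min}$, so again $\lambda\le p_{\min}$. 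Finally, if $\lambda$ is feasible for $\Theta_M^{(r)}$, then $M-\lambda J\in\MK^{(r)}_n\subseteq\COP_n$; for any $x\in\Delta_n$ one has $x^T(M-\lambda J)x\ge 0$, and since $x^TJx=(e^Tx)^2=1$, this yields $p_M(x)\ge\lambda$, hence $\lambda\le p_{\min}$.

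\textbf{The comparison $p_M^{(r)}\le p_{M,po}^{(r)}$.} This is immediate from the inclusion $\MM(x_1,\ldots,x_n)_r\subseteq\MT(x_1,\ldots,x_n)_r$, since every generator of the quadratic module is a singleton product among the $x_j$'s and therefore already a generator of the preordering.

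\textbf{The comparison $p_M^{(r)}\le P_M^{(2r)}$.} The idea is the substitution $x\mapsto x^{\circ 2}$. Given a feasible decomposition
\[
p_M(x)-\lambda = \sigma_0(x)+\sum_{j=1}^n \sigma_j(x)\,x_j + u(x)\Bigl(1-\sum_{i=1}^n x_i\Bigr),
\]
with $\sigma_j\in\Sigma$ and the degree bounds from \eqref{las-simplex-general}, substitute $x_i\mapsto x_i^2$ on both sides to get
\[
P_M(x)-\lambda = \sigma_0(x^{\circ 2})+\sum_{j=1}^n \sigma_j(x^{\circ 2})\,x_j^2 + u(x^{\circ 2})\Bigl(1-\sum_{i=1}^n x_i^2\Bigr).
\]
Each $\sigma_j(x^{\circ 2})$ is a sum of squares (composition of an SOS with squares is SOS), and each term $\sigma_j(x^{\circ 2})x_j^2=\bigl(x_j\bigr)^2\sigma_j(x^{\circ 2})$ is therefore SOS too. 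The degree bookkeeping doubles all degrees: $\deg(\sigma_0)\le 2r$ becomes $\deg(\sigma_0(x^{\circ 2}))\le 4r$, and $\deg(\sigma_j x_j)\le 2r$ becomes $\deg(\sigma_j(x^{\circ 2})x_j^2)\le 4r$, while the ideal term lies in $\langle 1-\sum_i x_i^2\rangle_{4r}$. Thus $P_M-\lambda\in\Sigma_{2r}+\langle 1-\sum_i x_i^2\rangle_{4r}$, which is exactly the condition defining $P_M^{(2r)}$, giving $\lambda\le P_M^{(2r)}$.

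None of the steps look genuinely hard; the only mild care required is the degree count in the substitution step, and the observation that for $\Theta_M^{(r)}$ one must use $x^TJx=1$ on the simplex to convert copositivity of $M-\lambda J$ into the desired lower bound.
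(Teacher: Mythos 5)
Your proof is correct and follows essentially the same route as the paper: all four parameters bound $p_{\min}$ from below because each certificate is nonnegative on the relevant feasible set (using $x^TJx=1$ on $\Delta_n$ for $\Theta_M^{(r)}$), the first comparison comes from $\MM(x_1,\ldots,x_n)_r\subseteq\MT(x_1,\ldots,x_n)_r$, and the second from substituting $x\mapsto x^{\circ 2}$ with the degree count you give. The paper states these steps more tersely but with the identical ideas.
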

\begin{proof}
That all parameters are lower bounds for $p_{\min}$ follows from their definition, the inequality $p^{(r)}_M\le p^{(r)}_{M,po}$ follows from the inclusion $\MM(x_1,\ldots,x_n)_r\subseteq \MT(x_1,\ldots,x_n)_r$ and, for the inequality $p^{(r)}_M\le P^{(2r)}_M$, note that $p_M-\lambda\in\MM(x_1,\ldots,x_n)_r$ $+\langle 1-\sum_ix_i\rangle_{2r}$ implies 
$P_M-\lambda\in \Sigma_{2r}+\langle 1-\sum_ix_i^2\rangle_{4r}$.
\end{proof}


Following  \cite{dKLP} we can now relate  the bounds in (\ref{las-sphere-general}) and (\ref{relax-copositive}). For this we use the   following result from \cite{dKLP} (see Proposition 2 and Lemma 1 there).

\begin{theorem}[de Klerk et al. \cite{dKLP}]\label{theo1}
Let $q$ be a form of even degree $2d\geq 2$. For any $r\in \oN$  we have:
$$ q(x)\left (\sum_{i=1}^{n}x_i^2\right)^r \in \Sigma_{r+d} \Longleftrightarrow q\in \Sigma_{r+d}+ \Big\langle1-\sum_{i=1}^{n} x_i^2\Big\rangle_{2(r+d)}. $$
\end{theorem}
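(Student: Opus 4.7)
\emph{Forward direction.} Set $s := \sum_{i=1}^n x_i^2$ throughout. Suppose $q\, s^r = \sigma \in \Sigma_{r+d}$. Using the identity $1 - s^r = (1-s)\sum_{k=0}^{r-1} s^k$, I would write
\[
q - \sigma \;=\; q(1 - s^r) \;=\; q(1-s)\sum_{k=0}^{r-1}s^k,
\]
and observe that the cofactor $q\sum_{k=0}^{r-1}s^k$ has degree $2d + 2(r-1)$, so its product with $(1-s)$ has degree at most $2(r+d)$. Hence $q = \sigma + (q-\sigma) \in \Sigma_{r+d} + \langle 1-s\rangle_{2(r+d)}$, as required.

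\emph{Backward direction.} The subtlety is that naively multiplying $q = \sigma + u(1-s)$ by $s^r$ produces the SOS term $\sigma s^r$ of degree up to $4r + 2d$, well above the required bound $2(r+d)$, so one has to show that the ideal contribution absorbs the excess in a controlled way. My plan is to pass to the quadratic ring extension $R := \oR[x][y]/(y^2 - s)$, a free $\oR[x]$-module with basis $\{1,y\}$, and to use the ring homomorphism $\oR[x] \to R[y^{-1}]$ sending $x_i \mapsto x_i/y$. Under this map $s \mapsto s/y^2 = 1$, so the ideal part vanishes and $q(x/y) = \sigma(x/y)$ holds in $R[y^{-1}]$. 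Multiplying by $y^{2(r+d)}$ and using that $q$ is a form of degree $2d$ converts the left side into $s^r q(x)$. Writing $\sigma = \sum_i p_i^2$ with $\deg p_i \leq r+d$, the element $y^{r+d} p_i(x/y) = \sum_\alpha c_\alpha x^\alpha y^{r+d-|\alpha|}$ lies in $R$ (since the exponents $r+d-|\alpha|$ are nonnegative) and splits uniquely as $A_i(x) + y B_i(x)$, where $A_i$ (resp.\ $B_i$) collects the monomials of $p_i$ with $r+d-|\alpha|$ even (resp.\ odd), weighted by the appropriate powers of $s$. A direct inspection shows that $A_i$ is a form of degree $r+d$ and $B_i$ a form of degree $r+d-1$. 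Squaring and using $y^2 = s$ yields
\[
y^{2(r+d)}\, p_i(x/y)^2 \;=\; A_i^2 + s\, B_i^2 + 2y\, A_i B_i,
\]
and summing over $i$, then equating the $1$- and $y$-components against the $y$-free left-hand side $s^r q(x)$, forces both $\sum_i A_i B_i = 0$ and
\[
s^r q \;=\; \sum_i A_i^2 + s\sum_i B_i^2 \;=\; \sum_i A_i^2 + \sum_{i,j}(x_j B_i)^2 \;\in\; \Sigma_{r+d},
\]
using $s = \sum_j x_j^2$ in the last step. A final degree check confirms that every squared polynomial on the right has degree at most $r+d$, matching the prescribed level.

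\emph{Main obstacle.} The real difficulty lies in the backward direction: there is no a priori degree bound linking $u$ in $q = \sigma + u(1-s)$ with the tight bound $r+d$ on the squares that must appear in $s^r q$. Adjoining a formal square root of $s$ via the extension $R$ is the essential device, because it disentangles each $p_i$ into two halves according to the parity of $r+d-|\alpha|$, and the vanishing of the $y$-component in $R$ then forces the cross terms $A_i B_i$ to cancel in aggregate, so the excess degree in $\sigma s^r$ is absorbed exactly by the ideal term and one is left with an honest SOS decomposition at the correct level.
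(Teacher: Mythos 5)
Your proof is correct. Note that the paper does not prove this statement itself: it is quoted from de Klerk, Laurent and Parrilo \cite{dKLP} (Proposition 2 and Lemma 1 there), and your argument is in substance the same homogenization argument used in that reference --- splitting each $p_i$ into the two ``parity classes'' of its monomials relative to $r+d$, padding with powers of $\sum_i x_i^2$, and observing that the cross terms must cancel; your adjunction of a formal square root $y$ of $\sum_i x_i^2$ is just a clean bookkeeping device for this. The forward direction via the telescoping factorization $1-s^r=(1-s)\sum_{k<r}s^k$ is also the standard one and the degree count is right. The only point you leave implicit is that the identity obtained in $R[y^{-1}]$ descends to $R=\oR[x]\oplus y\,\oR[x]$ before you equate components: this needs $y$ to be a non-zerodivisor in $R$ (true, since $y^2-\sum_i x_i^2$ is monic in $y$ and $y$ is coprime to its factors), and is worth one sentence.
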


\begin{lemma}\label{equiv-sos-lass}
 For any $M\in\MS^n$  and $r\ge 0$, we have: $\Theta_M^{(r)}=P_M^{(r+2)}$.
\end{lemma}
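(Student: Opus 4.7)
The plan is to reduce the claim to Theorem \ref{theo1} applied to the degree-4 form $P_{M-\lambda J}$. First I would rewrite the defining condition of $\Theta_M^{(r)}$: by definition, $M-\lambda J\in \MK^{(r)}_n$ means $(\sum_i x_i^2)^r\, P_{M-\lambda J}(x) \in \Sigma$, where
$$P_{M-\lambda J}(x) = P_M(x) - \lambda\Big(\sum_{i=1}^n x_i^2\Big)^2$$
is a form of degree $2d=4$ (so $d=2$). Since this product is homogeneous of degree $2r+4$, membership in $\Sigma$ is the same as membership in $\Sigma_{r+2}$. Applying Theorem \ref{theo1} with $q=P_{M-\lambda J}$ therefore yields
$$M-\lambda J\in \MK^{(r)}_n \iff P_M - \lambda\Big(\sum_{i=1}^n x_i^2\Big)^2 \in \Sigma_{r+2} + \Big\langle 1-\sum_{i=1}^n x_i^2\Big\rangle_{2(r+2)}.$$

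Next I would argue that the right-hand side is equivalent to $P_M-\lambda\in \Sigma_{r+2}+\langle \sum_i x_i^2 - 1\rangle_{2(r+2)}$, which is exactly the condition defining $P_M^{(r+2)}$. The point is the identity
$$\lambda\Big(\sum_{i=1}^n x_i^2\Big)^2 - \lambda \;=\; \lambda\Big(1+\sum_{i=1}^n x_i^2\Big)\Big(\sum_{i=1}^n x_i^2 - 1\Big),$$
a polynomial of degree $4\le 2(r+2)$ which therefore lies in $\langle 1-\sum_i x_i^2\rangle_{2(r+2)}$. Adding and subtracting this quantity shows that $P_M-\lambda$ and $P_M - \lambda(\sum_i x_i^2)^2$ differ by an element of the truncated ideal, so they are simultaneously in $\Sigma_{r+2}+\langle 1-\sum_i x_i^2\rangle_{2(r+2)}$.

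Combining these two equivalences, the set of $\lambda\in\oR$ feasible for the program defining $\Theta_M^{(r)}$ coincides with the set feasible for $P_M^{(r+2)}$, giving $\Theta_M^{(r)}=P_M^{(r+2)}$. No step poses real difficulty here: the only thing to watch out for is the bookkeeping of degrees (ensuring the degree-4 correction term fits inside the truncated ideal of degree $2(r+2)$, which holds for every $r\ge 0$) and the observation that a homogeneous sum-of-squares decomposition of a form of degree $2r+4$ automatically uses squares of forms of degree $r+2$, so that the ``$\Sigma$'' in the definition of $\MK^{(r)}_n$ can safely be replaced by $\Sigma_{r+2}$ when invoking Theorem \ref{theo1}.
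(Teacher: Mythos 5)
Your proof is correct and follows essentially the same route as the paper: apply Theorem \ref{theo1} to the degree-4 form $P_{M-\lambda J}=P_M-\lambda(\sum_i x_i^2)^2$ and then use the identity $P_M-\lambda=P_M-\lambda(\sum_i x_i^2)^2+\lambda((\sum_i x_i^2)^2-1)$ to absorb the correction term into the truncated ideal. The degree bookkeeping you highlight (homogeneous SOS of degree $2r+4$ lies in $\Sigma_{r+2}$, and the degree-4 correction fits in $\langle 1-\sum_i x_i^2\rangle_{2(r+2)}$) is exactly what the paper relies on as well.
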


\begin{proof}
By definition, $\Theta^{(r)}_M$ is the largest $\lambda$ for which the matrix $M-\lambda J$ belongs to the cone $ \MK^{(r)}_n$ or, equivalently, the polynomial $
(\sum_ix_i^2)^r (P_M(x)-\lambda(\sum_i x_i^2)^2)$ belongs to $\Sigma_{r+2}$. In view of Theorem~\ref{theo1} this is equivalent to requiring that $P_M-\lambda(\sum_i x_i^2)^2$ belongs to $\Sigma_{r+2}+\langle 1-\sum_ix_i^2\rangle_{2(r+2)}$. Now observe that  
\begin{equation}\label{eq0}
P_M(x)-\lambda=P_M(x) -\lambda\Big(\sum_ix_i^2\Big)^2 + \lambda\Big(\Big(\sum_ix_i^2\Big)^2-1\Big),
\end{equation}
 where $(\sum_ix_i^2)^2-1=(\sum_ix_i^2-1)(\sum_ix_i^2+1)\in \langle 1-\sum_ix_i^2\rangle_{2(r+2)}$. From this we obtain the desired identity $\Theta^{(r)}_M=P_M^{(r+2)}$.
\end{proof}


Next we relate the preordering-based bound $p^{(r)}_{M,po}$ (for the simplex formulation) and the Lasserre bound $P^{(r)}_M$ (for the sphere formulation). For this we use the following result of  \cite{ZVP2006}.

\begin{theorem}[Pena et al. \cite{ZVP2006}]\label{theo-squares-vera}
Let $q$ be a homogeneous polynomial of degree $d$ and define the polynomial $Q(x)=q(x^{\circ 2})$. Then, $Q\in \Sigma_d$  if and only if $q$ can be decomposed as
\begin{align}
    q(x)=\sum_{\substack{R\subseteq[n] \\
    |R|\leq d, |R|\equiv d \ (\text{\rm mod } 2)}}\sigma_R(x)\prod_{i\in R}x_i,
\end{align}
where $\sigma_R$ is a homogeneous polynomial with  degree at most $d-|R|$ and $\sigma_R\in \Sigma$.
\end{theorem}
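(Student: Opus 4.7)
The plan is to prove the two directions of the equivalence separately. The easier direction ($\Leftarrow$) is a direct substitution, while the harder direction ($\Rightarrow$) uses a parity decomposition applied to an assumed sum-of-squares representation of $Q$, exploiting the fact that $Q(x)=q(x^{\circ 2})$ is an \emph{even} polynomial (every monomial has even exponents).

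For the direction ($\Leftarrow$), assume $q(y)=\sum_I \sigma_I(y)\prod_{i\in I}y_i$ with $\sigma_I\in\Sigma$ of degree at most $d-|I|$. Since $|I|\equiv d\pmod 2$, $d-|I|$ is even, so we may write $\sigma_I=\sum_j p_{I,j}^2$ with $\deg p_{I,j}\le(d-|I|)/2$. Substituting $y=x^{\circ 2}$ gives
$$Q(x)=q(x^{\circ 2})=\sum_I \sigma_I(x^{\circ 2})\prod_{i\in I}x_i^2=\sum_{I,j}\Bigl(p_{I,j}(x^{\circ 2})\prod_{i\in I}x_i\Bigr)^{2},$$
and each squared polynomial has degree at most $(d-|I|)+|I|=d$, so $Q\in\Sigma_d$.

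For the direction ($\Rightarrow$), write $Q=\sum_k r_k^2$ with $\deg r_k\le d$. Since $Q$ is homogeneous of degree $2d$, the degree-$2d$ component of $\sum_k r_k^2$ equals $\sum_k r_{k,d}^2$ (because $a+b=2d$ with $a,b\le d$forces $a=b=d$), so we may replace each $r_k$ by its degree-$d$ part and assume each $r_k$ is homogeneous of degree $d$. For each $P\subseteq [n]$, let $r_k^P$ be the sum of monomials $x^\alpha$ of $r_k$ with $\alpha_i$ odd iff $i\in P$; then $r_k=\sum_P r_k^P$ and each $r_k^P$ factors uniquely as $r_k^P(x)=\bigl(\prod_{i\in P}x_i\bigr)s_k^P(x^{\circ 2})$. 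Expanding $r_k^2=\sum_P (r_k^P)^2+\sum_{P\neq P'}r_k^P r_k^{P'}$, each cross term is supported on monomials with parity pattern $P\triangle P'\neq\emptyset$. Since $Q$ is supported on purely even monomials, extracting the even part of the identity $Q=\sum_k r_k^2$ kills every cross term and yields
$$Q(x)=\sum_{P\subseteq[n]}\Bigl(\prod_{i\in P}x_i^2\Bigr)\sum_k s_k^P(x^{\circ 2})^2.$$
Both sides are polynomials in $x^{\circ 2}$, so by algebraic independence of $x_1^2,\ldots,x_n^2$ the identity descends to $q(y)=\sum_P \sigma_P(y)\prod_{i\in P}y_i$ where $\sigma_P(y):=\sum_k s_k^P(y)^2\in\Sigma$. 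Finally, homogeneity of $r_k^P$ of degree $d$ forces $|P|+2\deg s_k^P=d$, so $|P|\le d$, $|P|\equiv d\pmod 2$, and $\deg\sigma_P=d-|P|$.

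The step to watch is the pullback from an identity in $x$ to an identity in $y$: it is just algebraic independence of the $x_i^2$, but it is the only non-bookkeeping step. Everything else is parity and degree accounting, with the condition $|I|\equiv d\pmod 2$ appearing naturally as the constraint needed for a homogeneous piece of degree $d$ to factor as $\prod_{i\in I}x_i$ times a polynomial in $x^{\circ 2}$.
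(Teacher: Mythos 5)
Your proof is correct. The paper itself gives no proof of this statement---it is imported as a black box from Pena, Vera and Zuluaga \cite{ZVP2006}---but your argument (reduce to homogeneous squares by degree comparison, split each square into parity classes $P\subseteq[n]$, observe that cross terms live in nonzero parity classes and so vanish upon extracting the even part, then descend via the injectivity of $p(y)\mapsto p(x^{\circ 2})$) is precisely the standard argument behind the cited result, and every step, including the degree and parity bookkeeping that produces the conditions $|I|\le d$ and $|I|\equiv d\ (\mathrm{mod}\ 2)$, checks out.
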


As an application we recall the  characterization for the cone $\MK^{(0)}_n$, consisting  of the matrices $M$ for which the polynomial $P_M(x)=(x^{\circ 2})^TMx^{\circ 2}$ is a sum of squares.

\begin{proposition} [Parrilo \cite{Parrilo-thesis-2000}]\label{prop-K0}
A matrix $M$ belongs to $\MK^{(0)}_n$ if and only if there exist matrices $P\succeq 0$ and $N\ge 0$ such that $M=P+N$, where we may assume without loss of generality that $N_{ii}=0$ for all $i\in [n]$.
\end{proposition}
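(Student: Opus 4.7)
The plan is to establish the two directions separately, relying on Theorem \ref{theo-squares-vera} of Pena, Vera, and Zuluaga for the nontrivial direction, and handling the diagonal reduction ``without loss of generality'' as a separate observation.

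For the direction $(\Leftarrow)$ I would argue by direct construction. Given $M=P+N$ with $P\succeq 0$ and $N\ge 0$, factor $P=\sum_k v_kv_k^T$ with $v_k\in\oR^n$. Then
$$P_M(x)=(x^{\circ 2})^TPx^{\circ 2}+(x^{\circ 2})^TNx^{\circ 2}=\sum_k\Big(\sum_{i=1}^n v_k(i)\,x_i^2\Big)^{\!2}+\sum_{i,j=1}^n N_{ij}(x_ix_j)^2,$$
which is manifestly an SOS in $x$ since each $N_{ij}\ge 0$.

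For the direction $(\Rightarrow)$ I would apply Theorem \ref{theo-squares-vera} to the homogeneous polynomial $q=p_M$ of degree $d=2$. The subsets $I\subseteq[n]$ with $|I|\le 2$ and $|I|\equiv 0\pmod 2$ are $I=\emptyset$ (giving a term $\sigma_\emptyset(x)$ with $\sigma_\emptyset\in\Sigma$ of degree at most $2$) and $I=\{i,j\}$ with $i\neq j$ (giving a term $\sigma_{\{i,j\}}(x)\,x_ix_j$ where $\sigma_{\{i,j\}}$ is an SOS polynomial of degree $0$, hence a nonnegative constant $c_{ij}\ge 0$). Thus $P_M\in\Sigma$ yields a representation $p_M(x)=\sigma_\emptyset(x)+\sum_{i<j}c_{ij}\,x_ix_j$. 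Since $p_M$ is homogeneous of degree $2$, extracting the degree-$2$ homogeneous parts of the summands $p_k^2$ in any SOS expansion of $\sigma_\emptyset=\sum_k p_k^2$ gives a homogeneous degree-$2$ SOS, so we may take $\sigma_\emptyset(x)=x^TPx$ with $P\succeq 0$. Defining the symmetric matrix $N$ by $N_{ii}=0$ and $N_{ij}=c_{ij}/2$ for $i\neq j$ then gives $M=P+N$ with $P\succeq 0$ and $N\ge 0$ entrywise.

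For the ``without loss of generality'' clause, given any decomposition $M=P+N$ with $P\succeq 0$ and $N\ge 0$, let $D$ be the diagonal matrix with $D_{ii}=N_{ii}\ge 0$. Then $P+D\succeq 0$ (sum of a PSD matrix and a nonnegative diagonal matrix) while $N-D$ is entrywise nonnegative and has zero diagonal, so $M=(P+D)+(N-D)$ is a decomposition of the required form. The only real technical step is the careful invocation of Theorem \ref{theo-squares-vera} and the homogeneity bookkeeping to identify $\sigma_\emptyset$ with a quadratic form $x^TPx$; I do not anticipate any deeper obstacle.
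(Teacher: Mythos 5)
Your proof is correct and follows exactly the route the paper intends: the paper states Proposition \ref{prop-K0} without proof, explicitly framing it as an application of Theorem \ref{theo-squares-vera} (with the citation to Parrilo), and your argument supplies precisely that application, with the easy SOS construction for the converse and the standard diagonal-shift for the normalization $N_{ii}=0$. No gaps.
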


\begin{lemma}\label{link-preo_simp}
For any $M\in\MS^n$ and $r\ge 0$, we have: $p_{M,po}^{(r)}=P_M^{(2r)}$.
\end{lemma}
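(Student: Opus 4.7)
The plan is to establish the equality by proving both inequalities, exploiting the substitution $x_i \mapsto x_i^2$ to translate between the simplex preordering formulation and the sphere sum-of-squares formulation. The key tools will be Theorem~\ref{theo-squares-vera} of Peña–Vera–Zuluaga and Theorem~\ref{theo1} of de Klerk–Laurent–Parrilo, together with a routine dehomogenization modulo $\sum_i x_i - 1$. The case $r=0$ is trivial (both parameters are feasible only when $M=0$), so throughout I assume $r \geq 1$.

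For the easy direction $p^{(r)}_{M,po} \le P^{(2r)}_M$, I would start with a preordering certificate $p_M(x) - \lambda = \sum_{I} \sigma_I(x)\prod_{i \in I} x_i + u(x)\bigl(\sum_i x_i - 1\bigr)$ with $\sigma_I \in \Sigma$, $\deg(\sigma_I \prod_i x_i) \le 2r$ and $\deg(u) \le 2r-1$, and simply substitute $x_i \mapsto x_i^2$. Each resulting term $\sigma_I(x^{\circ 2})\prod_{i \in I} x_i^2 = \sigma_I(x^{\circ 2}) \cdot \bigl(\prod_{i \in I} x_i\bigr)^2$ is a sum of squares of degree at most $4r$, while $u(x^{\circ 2})\bigl(\sum_i x_i^2 - 1\bigr)$ lies in $\langle \sum_i x_i^2 - 1 \rangle_{4r}$. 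This exhibits $P_M(x) - \lambda \in \Sigma_{2r} + \langle \sum_i x_i^2 - 1\rangle_{4r}$, giving $\lambda \le P^{(2r)}_M$.

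For the reverse direction $P^{(2r)}_M \le p^{(r)}_{M,po}$, I would first observe that since $(\sum_i x_i^2)^2 - 1 \in \langle \sum_i x_i^2 - 1 \rangle_{4}$, one may replace $\lambda$ by $\lambda(\sum_i x_i^2)^2$ in the sphere certificate without leaving $\Sigma_{2r} + \langle \sum_i x_i^2 - 1 \rangle_{4r}$. Since $P_M - \lambda(\sum_i x_i^2)^2$ is an even form of degree $4$, Theorem~\ref{theo1} (with $d=2$ and exponent $2r-2$) translates this into the homogeneous identity $\bigl(\sum_i x_i^2\bigr)^{2r-2}\bigl(P_M(x) - \lambda(\sum_i x_i^2)^2\bigr) \in \Sigma_{2r}$. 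The crucial observation is that this left-hand side is exactly $q_0(x^{\circ 2})$, where $q_0(x) := \bigl(\sum_i x_i\bigr)^{2r-2}\bigl(p_M(x) - \lambda(\sum_i x_i)^2\bigr)$ is homogeneous of degree $2r$. Applying Theorem~\ref{theo-squares-vera} to $q_0$ (with $d = 2r$) then yields $q_0(x) = \sum_{I \subseteq [n], \, |I| \text{ even}, \, |I| \le 2r} \sigma_I(x) \prod_{i \in I} x_i$ with each $\sigma_I$ a homogeneous sum of squares of degree $2r - |I|$.

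It remains to dehomogenize this identity into one for $p_M - \lambda$. Writing $s := \sum_i x_i$, both $s^{2r-2} - 1$ and $s^2 - 1$ factor through $(s-1)$, so an elementary manipulation gives $p_M(x) - \lambda = s^{2r-2}\bigl(p_M(x) - \lambda s^2\bigr) + (1 - s)\, v(x)$ for an explicit polynomial $v$ of degree at most $2r-1$. Substituting the decomposition of $q_0 = s^{2r-2}(p_M - \lambda s^2)$ from the previous step then displays $p_M - \lambda$ as an element of $\mathcal{T}(x_1, \ldots, x_n)_r + \langle \sum_i x_i - 1 \rangle_{2r}$, giving $\lambda \le p^{(r)}_{M,po}$. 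The main obstacle is really the reverse direction, and the decisive tool is Theorem~\ref{theo-squares-vera}: the substitution $x \mapsto x^{\circ 2}$ loses information in general, so one needs a structural result tailored to invert this substitution on homogeneous polynomials. Once that is in hand, the rest is bookkeeping with degrees and moving between the simplex, the sphere, and their homogenizations via the ideals $\langle \sum_i x_i - 1 \rangle$ and $\langle \sum_i x_i^2 - 1 \rangle$.
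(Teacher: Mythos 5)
Your proposal is correct and follows essentially the same route as the paper's proof: the easy inequality via the substitution $x\mapsto x^{\circ 2}$, and the reverse inequality by homogenizing with $(\sum_i x_i^2)^2-1\in\langle \sum_ix_i^2-1\rangle$, applying Theorem~\ref{theo1} to get $(\sum_ix_i^2)^{2r-2}\bigl(P_M-\lambda(\sum_ix_i^2)^2\bigr)\in\Sigma_{2r}$, inverting the squaring substitution with Theorem~\ref{theo-squares-vera}, and dehomogenizing modulo $\sum_ix_i-1$. Your explicit treatment of the degenerate case $r=0$ is a small added care the paper leaves implicit, but the argument is the same.
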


\begin{proof}
First, assume  $\lambda$ is feasible for $p^{(r)}_{M,po}$, i.e., 
$$p_M(x)-\lambda=\sum_{\substack{R\subseteq[n] \\
    |R|\leq r, |R|\equiv r \ (\text{\rm mod} 2)}}\sigma_R(x)\prod_{i\in R}x_i + u(x)\left(1-\sum_{i=1}^{n}x_i\right),$$
where $\sigma_R\in\Sigma$ is a form of degree at most $2r-|R|$  and $\deg(u)\leq 2r-1$. Replacing throughout $x$ by $x^{\circ 2}$ we obtain a decomposition of $P_M-\lambda$ in $\Sigma_{2r}+\langle 1-\sum_ix_i^2\rangle_{4r}$, which shows that 
$P_M^{(2r)}\ge p_{M,po}^{(r)}$. We now show the reverse inequality.
For this assume  $\lambda$ is feasible for $P_M^{(2r)}$, i.e.,
$$P_M(x)-\lambda=\sigma(x)+\left(1-\sum x_i^2\right)u(x),$$
where $\sigma\in \Sigma_{2r}$  and $\deg(u)\leq 4r-2$.  Hence, using  (\ref{eq0}),  the homogeneous polynomial 
$P_M(x)-\lambda(\sum_ix_i^2-1)^2$ belongs to $\Sigma_{2r}+\langle 1-\sum_ix_i^2\rangle_{4r}$.
Applying Theorem \ref{theo1} to it we can conclude that 
$$\Big(\sum_{i=1}^{n}x_i^2\Big)^{2r-2}\Big(P_M(x)-\lambda\Big(\sum_{i=1}^{n}x_i^2\Big)^2\Big)\in \Sigma_{2r}.$$
Since this is a homogeneous polynomial in $x^{\circ 2}$ we can apply Theorem \ref{theo-squares-vera} to it and conclude that 
\begin{align}\label{aux}
\Big(\sum_{i=1}^{n}x_i\Big)^{2r-2}\Big(p_M(x)-\lambda\Big(\sum_{i=1}^{n}x_i\Big)^2\Big)= 
\sum_{\substack{R\subseteq[n] \\
    |R|\leq 2r, |R|\equiv 2r\ (\text{\rm mod }  2)}}\sigma_R(x)\prod_{i\in R}x_i,
\end{align}
where  $\sigma_R\in\Sigma$ has degree at most $2r-|R|$. Notice that
$$\Big(\sum_{i=1}^{n}x_i\Big)^{2r-2}=\Big(1-1+\sum_{i=1}^{n}x_i\Big)^{2r-2}=1+h(x)\Big(1-\sum_{i=1}^{n}x_i\Big),$$
for some $h\in \mathbb{R}[x]_{2r-3}$. 
 Using this observation,   (\ref{aux}) implies 
$$p_M(x)-\lambda\Big(\sum_{i=1}^{n}x_i\Big)^2\in \mathcal{T}(x_1,\dots, x_n)_r + \Big\langle1-\sum_{i=1}^{n}x_i\Big\rangle_{2r}.$$
Using again (\ref{eq0}) \MoL{(replacing $P_M$ by $p_M$)} we obtain 
$$p_M(x)-\lambda\in \mathcal{T}(x_1,\dots, x_n)_r + \Big\langle1-\sum_{i=1}^{n}x_i\Big\rangle_{2r},$$
which shows $p^{(r)}_{M,po}\ge P^{(2r)}_M$.
\end{proof}

As a direct consequence of Lemmas \ref{sim-sph}, \ref{equiv-sos-lass} and \ref{link-preo_simp} we obtain the following links among the above parameters.

\begin{corollary}\label{cor-link-general}
For any  $M\in\MS^n$ and $r\ge 0$ we have:  
\begin{align}
    p_{\min}\geq P_M^{(2r+2)}=\Theta_M^{(2r)}=p_{M,po}^{(r+1)}\geq p_M^{(r+1)}.
\end{align}
\end{corollary}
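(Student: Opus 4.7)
The plan is to derive the corollary as a direct consolidation of the three preceding lemmas (Lemma \ref{sim-sph}, Lemma \ref{equiv-sos-lass}, Lemma \ref{link-preo_simp}) with appropriate index shifts. Since all of the real work (bounding against $p_{\min}$, the sum-of-squares characterization of Theorem \ref{theo1}, and the homogenization/dehomogenization argument via Theorem \ref{theo-squares-vera}) has already been carried out, the corollary is essentially a bookkeeping exercise and no serious obstacle remains.

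Concretely, I would proceed in four short steps. First, the outer inequality $p_{\min}\ge P_M^{(2r+2)}$ is immediate from Lemma \ref{sim-sph} applied at order $2r+2$. Second, to establish the leftmost equality $P_M^{(2r+2)}=\Theta_M^{(2r)}$, I invoke Lemma \ref{equiv-sos-lass} with the parameter there taken to be $2r$; the lemma asserts $\Theta_M^{(s)}=P_M^{(s+2)}$ for every $s\ge 0$, and specializing to $s=2r$ yields exactly $\Theta_M^{(2r)}=P_M^{(2r+2)}$. Third, to establish the middle equality $\Theta_M^{(2r)}=p_{M,po}^{(r+1)}$, I use Lemma \ref{link-preo_simp}, which gives $p_{M,po}^{(s)}=P_M^{(2s)}$ for all $s\ge 0$; specializing to $s=r+1$ produces $p_{M,po}^{(r+1)}=P_M^{(2r+2)}$, which combined with the previous step yields the desired identity. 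Finally, the rightmost inequality $p_{M,po}^{(r+1)}\ge p_M^{(r+1)}$ is the inequality recorded in Lemma \ref{sim-sph} at order $r+1$, which in turn comes from the inclusion $\mathcal{M}(x_1,\ldots,x_n)_{r+1}\subseteq \mathcal{T}(x_1,\ldots,x_n)_{r+1}$.

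Chaining these four observations gives the displayed chain $p_{\min}\ge P_M^{(2r+2)}=\Theta_M^{(2r)}=p_{M,po}^{(r+1)}\ge p_M^{(r+1)}$, completing the proof. The only point that merits a sentence of care is keeping the index shifts consistent (in particular, matching the ``$r$'' of the three lemmas to the ``$r$'' appearing in the statement of the corollary), but this is routine. I do not anticipate any substantive difficulty, and the proof can be presented in a handful of lines.
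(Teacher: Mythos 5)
Your proof is correct and matches the paper exactly: the corollary is stated there as a direct consequence of Lemmas \ref{sim-sph}, \ref{equiv-sos-lass} and \ref{link-preo_simp}, with precisely the substitutions you make ($s=2r$ in Lemma \ref{equiv-sos-lass} and $s=r+1$ in Lemma \ref{link-preo_simp}). The index bookkeeping is right throughout.
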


\begin{remark}
In view of the formulation (\ref{programp}) for the parameter $p_M^{(1)}$,  the difference with the parameter $p^{(1)}_{M,po}=P^{(2)}_M=\Theta^{(0)}_M$ lies in the fact that, while for $p^{(1)}_M$ we search for a decomposition  $M=\lambda J+ Q+ (ea^T+ae^T)/2\succeq 0$ with $Q\succeq 0$ and $a\in\oR^n_+$, in the definition of $\Theta^{(0)}_M$ we search for a decomposition  $M=\lambda J+Q+N\succeq 0$ with $Q\succeq 0$, but now $N$ can be an arbitrary entry-wise nonnegative matrix.
\end{remark}

\subsection{Application to the stable set problem}

Here we apply   the results in Section \ref{secSQP}  to the formulation of the stability number $\alpha(G)$  via the Motzkin-Straus formulation (\ref{motzkin-form}), the special instance of standard quadratic program where we select the matrix $M=I+A_G$. As in the introduction we set $f_G=p_M$, $F_G=P_M$ and $f_{G,po}=p_{M,po}$ for this matrix $M=I+A_G$. As a direct application of Corollary \ref{cor-link-general}, we obtain
\begin{equation}\label{eqlink2}
{1\over \alpha (G)} \ge F^{(2r+2)}_G=f^{(r+1)}_{G,po}\ge f^{(r+1)}_{G}.
\end{equation}
It remains to  link  the parameters $\vartheta^{(r)}(G)$ and $\Theta^{(r)}_M$ for the matrix $M=I+A_G$.

\begin{lemma}\label{lemlink2}
For any graph $G$ and $r\ge 0$,  we have: $\Theta^{(r)}_{I+A_G}=\dfrac{1}{\vartheta^{(r)}(G)}.$
\end{lemma}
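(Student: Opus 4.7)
The plan is to observe that the cone $\MK^{(r)}_n$ is invariant under multiplication by positive scalars, and exploit this homogeneity to convert between the two parametric programs defining $\vartheta^{(r)}(G)$ and $\Theta^{(r)}_{I+A_G}$. Concretely, I would first note that $\MK^{(r)}_n$ is a convex cone: if $M\in\MK^{(r)}_n$ and $c>0$ then $(\sum_i x_i^2)^r P_{cM}(x)=c(\sum_i x_i^2)^r P_M(x)$ is still a sum of squares, so $cM\in\MK^{(r)}_n$.

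Next I would verify that both extrema are attained at positive values. On the one hand, $\vartheta^{(r)}(G)$ is finite: the cones $\MK^{(r)}_n$ are increasing in $r$ (multiply the defining SOS certificate by $\sum_i x_i^2$), so $\vartheta^{(r)}(G)\le \vartheta^{(0)}(G)=\vartheta'(G)<\infty$; and it is positive since $\vartheta^{(r)}(G)\ge \alpha(G)\ge 1$. On the other hand, $\lambda=0$ is feasible for $\Theta^{(r)}_{I+A_G}$ because $I+A_G\in\MK^{(0)}_n\subseteq \MK^{(r)}_n$ by Proposition \ref{prop-K0} (with $P=I$, $N=A_G$), so $\Theta^{(r)}_{I+A_G}\ge 0$; combined with the scaling step below this upgrades to $\Theta^{(r)}_{I+A_G}>0$.

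The heart of the proof is then the following two-sided scaling argument. Set $t^\ast=\vartheta^{(r)}(G)$; then $t^\ast(I+A_G)-J\in\MK^{(r)}_n$, and multiplying by the positive scalar $1/t^\ast$ gives $(I+A_G)-(1/t^\ast)J\in\MK^{(r)}_n$. Hence $\lambda=1/t^\ast$ is feasible for $\Theta^{(r)}_{I+A_G}$, yielding $\Theta^{(r)}_{I+A_G}\ge 1/\vartheta^{(r)}(G)>0$. Conversely, letting $\lambda^\ast=\Theta^{(r)}_{I+A_G}>0$, we have $(I+A_G)-\lambda^\ast J\in\MK^{(r)}_n$; multiplying by $1/\lambda^\ast$ yields $(1/\lambda^\ast)(I+A_G)-J\in\MK^{(r)}_n$, so $t=1/\lambda^\ast$ is feasible for $\vartheta^{(r)}(G)$, giving $\vartheta^{(r)}(G)\le 1/\Theta^{(r)}_{I+A_G}$. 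The two inequalities combine to the desired identity $\Theta^{(r)}_{I+A_G}=1/\vartheta^{(r)}(G)$.

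There is no real obstacle here: the only subtlety is to ensure the scaling is legitimate, which amounts to the positivity $\Theta^{(r)}_{I+A_G}>0$ and $\vartheta^{(r)}(G)<\infty$. Once these are in hand, the argument is a one-line homogenization. Combined with \eqref{eqlink2}, this lemma immediately yields the chain \eqref{eqlink1} announced in the introduction.
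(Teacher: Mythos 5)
Your proof is correct and is essentially the argument the paper has in mind: the paper's proof of this lemma is just "directly from the definitions," which is precisely your observation that $\MK^{(r)}_n$ is a cone, so $t(I+A_G)-J\in\MK^{(r)}_n$ for $t>0$ is equivalent to $(I+A_G)-(1/t)J\in\MK^{(r)}_n$, turning the minimization over $t$ into the maximization over $\lambda=1/t$. Your additional checks (finiteness and positivity of $\vartheta^{(r)}(G)$, feasibility of $\lambda=0$) are a careful spelling-out of why the substitution is legitimate, not a different route.
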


\begin{proof}
Directly from the definitions of $\vartheta^{(r)}(G)$  in (\ref{eqthetar}) and $\Theta^{(r)}_{I+A_G}$ in (\ref{relax-copositive}).
\end{proof}

Combining (\ref{eqlink2}) and Lemma \ref{lemlink2}
we obtain the inequalities claimed in (\ref{eqlink1}), which we repeat here for convenience.

\begin{corollary}\label{cor-link-alpha}
For any graph $G$ and $r\ge 0$ we have
$$\dfrac{1}{\alpha(G)}\geq \dfrac{1}{\vartheta^{(2r)}}=\lsph_G^{(2r+2)}=\lsim_{G,po}^{(r+1)}\geq \lsim_G^{(r+1)}.$$
\end{corollary}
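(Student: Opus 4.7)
The plan is to specialize the general chain of inequalities established in Corollary~\ref{cor-link-general} to the particular matrix $M = I + A_G$ that arises from the Motzkin--Straus formulation of $\alpha(G)$, and then translate each term into the graph-theoretic notation. Concretely, I would first note that by the definitions $f_G = p_M$, $F_G = P_M$, and $f_{G,po} = p_{M,po}$ (for $M = I + A_G$), one immediately has the matching of parameters $f_G^{(r)} = p_M^{(r)}$, $F_G^{(r)} = P_M^{(r)}$, and $f_{G,po}^{(r)} = p_{M,po}^{(r)}$ from the definitions in~\eqref{eqlassimplex},~\eqref{eqlaspo},~\eqref{eqlassphere} versus~\eqref{las-simplex-general},~\eqref{las-preo-general},~\eqref{las-sphere-general}.

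Next, I would invoke the Motzkin--Straus identity~\eqref{motzkin-form}, which asserts that the minimum of $p_M(x) = x^T(I+A_G)x$ over $\Delta_n$ is exactly $1/\alpha(G)$; in the notation of Section~\ref{secSQP} this reads $p_{\min} = 1/\alpha(G)$. Substituting these identifications into the inequality chain
$$p_{\min} \;\geq\; P_M^{(2r+2)} \;=\; \Theta_M^{(2r)} \;=\; p_{M,po}^{(r+1)} \;\geq\; p_M^{(r+1)}$$
from Corollary~\ref{cor-link-general} yields
$$\frac{1}{\alpha(G)} \;\geq\; F_G^{(2r+2)} \;=\; \Theta_{I+A_G}^{(2r)} \;=\; f_{G,po}^{(r+1)} \;\geq\; f_G^{(r+1)}.$$

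The only remaining step is to rewrite the middle term $\Theta_{I+A_G}^{(2r)}$ using Lemma~\ref{lemlink2}, which gives $\Theta_{I+A_G}^{(2r)} = 1/\vartheta^{(2r)}(G)$. Plugging this in produces exactly the desired chain
$$\frac{1}{\alpha(G)} \;\geq\; \frac{1}{\vartheta^{(2r)}(G)} \;=\; F_G^{(2r+2)} \;=\; f_{G,po}^{(r+1)} \;\geq\; f_G^{(r+1)}.$$
There is no substantive obstacle here, since all the genuine work (the Pe\~na--Vera--Zuluaga translation between sums of squares in $x$ and in $x^{\circ 2}$, the de~Klerk--Laurent--Parrilo equivalence between sphere-based SOS and powers of $\sum x_i^2$, and the characterization of $\mathcal K_n^{(r)}$-membership) has been absorbed into Corollary~\ref{cor-link-general} and Lemma~\ref{lemlink2}; the corollary is really a dictionary-translation from the standard-quadratic-program setting to the graph setting.
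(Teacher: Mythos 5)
Your proposal is correct and follows exactly the paper's own route: the paper likewise obtains $(\ref{eqlink2})$ by specializing Corollary~\ref{cor-link-general} to $M=I+A_G$ (with $p_{\min}=1/\alpha(G)$ from the Motzkin--Straus formulation) and then inserts $\Theta^{(2r)}_{I+A_G}=1/\vartheta^{(2r)}(G)$ via Lemma~\ref{lemlink2}. Nothing further is needed.
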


\ignore{
A starting point to define hierarchies of approximation for the stability number is the Motzkin-Strauss formulation \cite{motzkin}, which express $\alpha(G)$ via an optimization problem over the standard simplex.
\begin{align}\label{motzkin-form}\tag{M-S}
    \frac{1}{\alpha(G)}=\min x^T(A_G+I)x \text{ subject to } x\in\Delta_n.
\end{align}
Following the analysis of the hierarchies for a general quadratic program over the simplex we consider the corresponding hierarchies for the problem (\ref{motzkin-form}). Recall that $\lsim_G(x)=x^T(A_G+I)x$, $\lsph_G(x)=\lsim_G(x^{\circ 2})$. Define the hierarchies defined by the programs (\ref{las_simplex_general}), (\ref{las_sphere_general}) and (\ref{preo-general}) for the matrix $A_G+I$.
\begin{align*}
    \lsim_G^{(r)}=\sup t \text{ subject to } \lsim_G - t \in \quadr_r(x_1, x_2, \dots, x_n) +  \left\langle\sum_{i=1}^{n} x_i-1\right\rangle_{2r},
\end{align*}
\begin{align*}
    \lsph_G^{(r)}=\sup t \text{ subject to } \lsph_G-t \in \Sigma_r +\left\langle\sum_{i=1}^{n}x_i^2-1\right\rangle_{2r}.
\end{align*}
\begin{align*}
    \lsim_{G,po}^{(r)}=\sup t \text{ subject to } \lsim_G-\lambda\in \mathcal{T}_r(x_1,x_2,\dots, x_n) + \left \langle 1-\sum_{i=1}^{n}x_1\right\rangle_{2r}.
\end{align*}  
\\
Now, consider the hierarchy (\ref{relax-copositive}) for $M=A_G+I$. Observe that 
\begin{align*}
   \Theta_M^{(r)} =\sup\Big\{\lambda : M-\lambda J \in \MK^{(r)}_n\Big\}=\sup\Big\{\lambda :\frac{1}{\lambda} M-J \in \MK^{(r)}_n\Big\}=\frac{1}{\inf\Big\{t :tM-J \in \MK^{(r)}_n\Big\}}=\frac{1}{\vartheta^{(r)}(G)}.
\end{align*} 
Hence, applying the Corollary \ref{link-hier-general} we obtain the desired link between the hierchies for the stability number of a graph 
\begin{corollary}\label{cor-link-hier}
Let $G$ be a graph and let $r\in \mathbb{N}$. Then
$$\alpha(G)\leq \vartheta^{(2r)}= \frac{1}{\lsph_G^{(r)}}=\frac{1}{\lsim_{G,po}^{(r)}}\leq \frac{1}{\lsim_G^{(r)}}$$
\end{corollary}
}


We now use the result of Lemma \ref{ch-qp-level-1} to characterize when the parameter $f^{(1)}_G$ is feasible (and thus exact). 
\begin{lemma}\label{lemexactfG1}
For any graph $G$, the parameter $f^{(1)}_G$ is finite  or, equivalently,  $f^{(1)}_G= 1/\alpha(G)$, if and only if $G$ is a disjoint union of cliques.
\end{lemma}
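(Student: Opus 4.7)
The plan is to reduce to Lemma \ref{ch-qp-level-1} applied to $M = I + A_G$. That lemma tells us that $f^{(1)}_G$ is finite if and only if there exist $\lambda \in \mathbb{R}$ and $a \in \mathbb{R}^n_+$ such that
$$Q := I + A_G - \lambda J - \tfrac{1}{2}(ae^T + ea^T) \succeq 0,$$
and in that case $f^{(1)}_G = 1/\alpha(G)$ automatically. So the entire task is to show such a decomposition exists precisely when $G$ is a disjoint union of cliques.

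For the easy direction, assume $G$ is the disjoint union of $k = \alpha(G)$ cliques $C_1,\ldots,C_k$. Then $I + A_G = \sum_{\ell=1}^k \chi^{C_\ell}(\chi^{C_\ell})^T$. Take $\lambda = 1/k$ and $a = 0$. For any $v \in \mathbb{R}^n$, writing $w_\ell := \sum_{j \in C_\ell} v_j$, one has
$$v^T(I+A_G)v - \tfrac{1}{k} v^T J v = \sum_{\ell=1}^k w_\ell^2 - \tfrac{1}{k}\Big(\sum_{\ell=1}^k w_\ell\Big)^2 \ge 0$$
by Cauchy--Schwarz, so the required $Q$ is PSD and feasibility follows.

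For the converse, suppose the decomposition above exists. The key tool is Lemma \ref{lem-finite-pM1}: for $i \neq j$ one has $M_{ii} + M_{jj} - 2M_{ij} = Q_{ii} + Q_{jj} - 2Q_{ij}$, and if this quantity vanishes then $Q(e_i - e_j) = 0$. For $M = I + A_G$, this quantity equals $0$ when $\{i,j\} \in E$ and equals $2$ otherwise. Therefore, for every edge $\{i,j\} \in E$, the vector $e_i - e_j$ lies in $\ker Q$. The plan is then to propagate this along walks: if $G$ contains an induced $P_3$ with edges $\{i,j\}, \{j,k\} \in E$ and $\{i,k\} \notin E$, then $Q(e_i - e_j) = Q(e_j - e_k) = 0$ gives $Q(e_i - e_k) = 0$, so $Q_{ii} + Q_{kk} - 2Q_{ik} = 0$; but then Lemma \ref{lem-finite-pM1} forces $M_{ii} + M_{kk} - 2M_{ik} = 0$, contradicting that $\{i,k\} \notin E$ (which makes this equal to $2$). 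Hence $G$ is $P_3$-free, i.e., every connected component is a clique, which completes the proof.

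The main content is really the propagation step in the converse: the kernel of $Q$ must contain $e_i - e_j$ for every edge, and closure under addition forces it to contain $e_i - e_k$ for every pair in the same connected component, which is incompatible with any nonedge inside a component. Everything else is a direct reading of Lemmas \ref{ch-qp-level-1} and \ref{lem-finite-pM1}.
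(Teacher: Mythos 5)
Your proof is correct and follows essentially the same route as the paper: both directions rest on Lemma \ref{ch-qp-level-1} and Lemma \ref{lem-finite-pM1}, with the converse obtained by propagating $e_i-e_j\in\ker Q$ along edges to rule out induced paths on three vertices, and the forward direction certified by the decomposition $I+A_G-\tfrac{1}{k}J\succeq 0$ via Cauchy--Schwarz. No gaps; your write-up is if anything slightly more explicit than the paper's in exhibiting the feasible pair $(\lambda,a)=(1/k,0)$.
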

\begin{proof}
We use Lemma \ref{ch-qp-level-1} applied to the matrix $M=I+A_G$.  First, assume $M=\lambda J+Q+(ae^T+ea^T)/2$ for some $\lambda\in\oR$, $Q\succeq 0$ and $a\in\oR^n_+$,
we show that $G$ is a disjoint union of cliques. For this it suffices to show that $\{1,2\},\{1,3\}\in E$ implies $\{2,3\}\in E$. This follows easily using Lemma \ref{lem-finite-pM1}. Indeed, if $\{1,2\},\{1,3\}\in E$ then we have $M_{11}+M_{22}-2M_{12}=0$ and thus $Q(e_1-e_2)=0$ and, in the same way, $Q(e_1-e_3)=0$. This implies $Q(e_2-e_3)=0$ and thus $M_{22}+M_{33}-2M_{23}=0$, i.e., $\{2,3\}\in E$.
\\
Conversely, assume $G$ is a disjoint union of cliques, say $V=C_1\cup\ldots \cup C_k$ where $k=\alpha(G)$ and each $C_i$ is a clique of $G$. We show that $p^{(1)}_M=p_{\min}$.  For this note that, for any $x\in \Delta_n$, we have 
$$x^T(I+A_G)x= \sum_{i=1}^k \Big(\sum_{j\in C_i}x_j\Big)^2 \ge {1\over k}={1\over \alpha(G)},
$$
where we use Cauchy-Schwartz inequality \MoL{combined with $\sum_{i=1}^k\big( \sum_{j\in C_i}x_j\big) =1$ to derive the inner inequality}. This shows $p^{(1)}_M\ge p_{\min}$ and thus equality holds.
\end{proof}

In Section \ref{sec-convergence-w} we will investigate  finite convergence of the simplex-based Lasserre hierarchy $f^{(r)}_G$, which, in view of 
Corollary \ref{cor-link-alpha}, 
directly implies finite convergence of the  hierarchy $\vartheta^{(r)}(G)$.  
 For this we will use Theorem \ref{theo-Nie}  that requires  to understand the structure of the global minimizers of problem (\ref{motzkin-form}), which is what we do in the next section, \MoL{in the general setting of the weighted stable set problem. }

\section{\MoL{Minimizers of the (weighted) Motzkin-Straus formulation}}\label{minimizers-w}

In this section we prove some properties  of the 
minimizers of the  Motzkin-Straus formulation, in the general setting 
of  the weighted stable set problem. 
We consider a graph $G=(V=[n],E)$ equipped with positive node weights $w\in\oR^V$, i.e.,  with $w_i>0$ for $i\in V$. 
A stable set $S\subseteq V $ is said to be  {\em $w$-maximum} if it  maximizes the function $w(S)=\sum_{i\in S} w_i$ over all  stable sets of $G$ and $\alpha(G,w)$ denotes the maximum weight of a stable set  in $G$. 
We say that an edge $\{i,j\}\in E$ is $w$-\textit{critical} in $G$ if there exists a set $R\subseteq V$ such that both  sets $R\cup\{i\}$ and $R\cup\{j\}$ are $w$-maximum stable sets; note  this implies $\alpha(G,w)=w(R)+w_i=w(R)+w_j$ and thus equality $w_i=w_j$. When $w=e=(1,\ldots,1)$ is the all-ones weight vector the $w$-maximum stable sets are  the maximum stable sets, $\alpha(G,e)=\alpha(G)$  and the $w$-critical edges are the critical edges of $G$. 

Following \cite{GHPR} let us define the matrix $B_w\in \mathcal S^n$, with entries
\begin{equation}\label{eqBw}
(B_w)_{ii}={1\over w_i} \ (i\in [n]),\ (B_w)_{ij}={1\over 2}\big({1\over w_i}+{1\over w_j}\big)\ (\{i,j\}\in E),\ (B_w)_{ij}=0\ (\{i,j\}\in \overline E)
\end{equation}
and the  matrix spaces
\begin{equation}\label{eqNG}
\mathcal N(G)=\{A\in \mathcal S^n: A_{ii}=0\ (i\in [n]),\ A_{ij}\ge 0 \ (\{i,j\}\in E),\ A_{ij}=0\ (\{i,j\}\in \overline E)\},
\end{equation}
\begin{equation}\label{eqmatB}
\mathcal M(G,w)=B_w+\mathcal N(G)=\{B_w+A: A\in  \mathcal N(G)\},
\end{equation}
so that  
\begin{equation}\label{eqmatBapex}
\mathcal M(G,w)=\{B\in \mathcal S^n: B_{ii}={1\over w_i}\ (i\in V),\ B_{ij}\ge {1\over 2}(B_{ii}+B_{jj})\ (\{i,j\}\in E),\ B_{ij}=0 \ (\{i,j\}\in \overline E)\}.
\end{equation}
For the all-ones node weights $w=e=(1,1,\dots, 1)$, we have $B_w=I+A_G$.
We will also need the set 
\begin{equation}\label{setBw}
 \mathcal M^*(G,w) =\{B\in \mathcal M(G,w): 2B_{ij}>B_{ii}+B_{jj}  \text{ for all  } w\text{-critical edges } \{i,j\} \in E\}.
 \end{equation}
Clearly $\mathcal M^*(G,w)$ contains all matrices lying in the relative interior of $\mathcal M(G,w)$ and $\mathcal M^*(G,w)=\mathcal M(G,w)$ if there is no $w$-critical edge in $G$. 

In  \cite{GHPR} it  is shown that, for any matrix $B\in \mathcal M(G,w)$, the weighted stable set problem can be reformulated via the following weighted analogue of the Motzkin-Straus formulation
\begin{equation}\label{motzkin-w}\tag{M-S-weighted}
	\frac{1}{\alpha(G,w)}=\min \{p_B(x)=x^TBx: x\in \Delta_n\}.
\end{equation}
We now  investigate the minimizers of problem (\ref{motzkin-w}), whose structure depends on the weighted graph $(G,w)$ and  on the choice of the matrix $B$ in the set $\mathcal M(G,w)$.
In particular we will show that  their number is finite precisely when $B$ belongs to the set $\mathcal M^*(G,w)$.
 As mentioned earlier  the property of having finitely many minimizers is indeed important in the analysis of the finite convergence of the corresponding Lasserre hierarchy.

We start with a useful property of  local minimizers for a  class of standard quadratic programs. The proof is essentially along the lines of the proof of \cite[Theorem 5]{GHPR} (and is the key argument for showing the equality in (\ref{motzkin-w})).

\begin{lemma}\label{lem-reduce-edge}
	Consider  the standard quadratic program
	\begin{equation}\label{eqpmin}
	p_{\min}=\min \{p_M(x)=x^TMx: x\in \Delta_n\},
	\end{equation}
	where $M$ is a matrix of the form  \begin{equation}\label{eqM}
	M=\left(\begin{matrix} a_1 & b & c_1^T\cr 
	b & a_2 & c_2^T\cr
	c_1 & c_2 & M_0
	\end{matrix}\right),
\end{equation}
with $a_1,a_2 >0$, $b\in \oR$ satisfying  $2b\geq a_1+a_2$, $c_1,c_2\in\oR^{n-2}$ and $M_0\in \mathcal S^{n-2}$. Assume $x$ is a local minimizer of problem (\ref{eqpmin}) with $x_1, x_2 > 0$ and define the vectors  $\tilde{x}=x+x_2(e_1-e_2)$ and $\overline{x}=x-x_1(e_1 -e_2) \in \Delta_n$.
Then, $2b=a_1+a_2$ holds and, for any scalar $\lambda\in [0,1]$, we have $p_M(\lambda \tilde x+(1-\lambda) \overline x)=p_M(x)$.
\end{lemma}

\begin{proof}
Consider the problem 
\begin{equation*}\label{p-aux}
\min_{t\in[-x_2,x_1]} p_M(x_1-t, x_2+t, x_3, \dots, x_n),
\end{equation*}
which can be rewritten as 
\begin{equation}\label{p-aux-2}
 \min_{t\in[-x_2,x_1]} t^2(a_1+a_2 - 2b) + \beta t +\gamma, 
 \end{equation}
 where $\beta, \gamma$ are scalars depending on $M$. By assumption, $t=0$ lies in the interior of the interval $[-x_2,x_1]$ and it is a local minimizer 
of problem (\ref{p-aux-2}). 
 If $a_1+a_2-2b<0$ then the objective function of (\ref{p-aux-2}) is strictly concave and thus it cannot have a local minimum at an interior point of $[-x_2,x_1]$.
Hence $a_1+a_2=2b$ holds. If $\beta\ne 0$ then the objective function  is  linear and thus it again 
cannot have a local minimum in the interior of $[-x_2,x_1]$. Hence we must have $\beta=0$, so that $p_M(x)= p_M(x_1-t, x_2+t, x_3, \dots, x_n)$ for any $t\in [-x_2,x_1]$ or, equivalently,
$p_M(\lambda \tilde x+(1-\lambda)\overline x)=p_M(x)$ for any $\lambda\in [0,1]$.
\end{proof}

We recall a result of \cite{GHPR} that characterizes the global minimizers of ((\ref{motzkin-w}) whose support is a stable set.

\begin{lemma}[\cite{GHPR}]\label{lem_stable_then}
Assume $B\in \mathcal M(G,w)$. Let $x\in\Delta_n$ and assume its support $S=\supp(x)$ is a stable set of $G$. If $x$ is a global minimizer of problem (\ref{motzkin-w}) then $S$ is a $w$-maximum stable set,  $x_i=\frac{w_i}{\alpha(w,G)}$ for $i\in S$ and $x_i=0$ for $i\in V\setminus  S$.
\end{lemma}

\begin{proof}
The argument is classical and based on Cauchy-Schwartz inequality. We have 
$$1=\sum_{i\in S}x_i=\sum_{i\in S} {x_i\over \sqrt{w_i}}\sqrt{w_i}\le \sqrt {\sum_{i\in S} {x_i^2\over w_i}} \sqrt{\sum_{i\in S}w_i}= \sqrt{x^TBx}\sqrt{w(S)}\le \sqrt{x^TBx} \sqrt{\alpha(G,w)},$$
where the last two (in)equalities hold since $S$ is a stable set. By assumption, $x^TBx=1/\alpha(G,w)$ since $x$ is a global minimizer of ((\ref{motzkin-w}). Hence equality holds throughout. Then equality in the first (Cauchy-Schwartz) inequality implies the desired result.
\end{proof}

We now characterize  the global minimizers of problem (\ref{motzkin-w}).

\begin{proposition}\label{global_min}
Assume $B\in \mathcal M(G,w)$. Let $x\in \Delta_n$ with support $S=\supp(x)$ and let $C_1,\ldots,C_k$ denote the connected components of the graph $G[S]$. Then $x$ is a global minimizer of problem (\ref{motzkin-w}) if and only if the following conditions hold:
\begin{description}
\item[(i)] $w_i=w_j$ for all $i,j\in C_h$ and $h\in [k]$,
\item[(ii)] $C_h$ is a clique of $G$ for all $h\in[k]$,
\item[(iii)] $\sum_{i\in C_h} x_i=\frac{w_{i_h}}{\alpha(G,w)}$, where $i_h$ is any given node in $C_h$, for all $h\in [k]$,
\item [(iv)] $2B_{ij}=B_{ii}+B_{jj}=\frac{1}{w_i}+\frac{1}{w_j}$ for all edges $\{i,j\}$ of $G[S]$.
 \end{description}
In that case all the edges of $G[S]$ are $w$-critical. 
\end{proposition}

\begin{proof}
We first show the `if part'. Assume (i)-(iv) hold, we show $x^TBx=1/\alpha(G,w)$ holds.
Using (i)-(iv) we obtain $${1\over \alpha(G,w)}\le x^TBx= \sum_{h=1}^k {1\over w_{i_h}} \big(\sum_{i\in C_h}x_i\big)^2 =\sum_{h=1}^k {1\over w_{i_h}} \big({w_{i_h}\over \alpha(G,w)}\big)^2 
={1\over \alpha(G,w)^2} \sum_{h=1}^k w_{i_h}\le {1\over \alpha(G,w)},
$$
since the set $\{i_h: h\in [k]\}$ is a stable set in $G$. Hence equality holds throughout, which shows the desired result.

We now show the `only if' part. 
 Assume  $x$ is a global minimizer, we show that (i)-(iv) hold. Condition (iv) follows directly using Lemma \ref{lem_stable_then} applied to the matrix $B$. 
Consider nodes  $i_1\in C_1,  \dots, i_k\in C_k$ lying in the different connected components of $G[S]$. Then  $I=\{i_1,\ldots,i_k\}$ is a stable set of $G$. Define the vector $y\in \Delta_n$, with entries 
	$y_{i_h}=\sum_{i\in C_h}x_i$ for $h\in [k]$ and $y_i=0$ for all remaining vertices $i\in V\setminus I$. 
	By applying iteratively Lemma~\ref{lem-reduce-edge} (with the matrix $B$, using the edges in a spanning tree in each connected component $C_h$) we obtain that  $y^TBy=x^TBx$. Hence, $y$ is a global minimizer of (\ref{motzkin-w}) whose support is a stable set and thus,   by Lemma \ref{lem_stable_then}, we obtain that $I$ is a $w$-maximum stable set and $\sum_{i\in C_h}x_i=y_{i_h}=w_{i_h}/ w(I)$ for all $h\in [k]$, so that (iii) holds. 
	Next we check (ii), i.e., that   each component (say) $C_1$ is a clique. Indeed, if $i\ne j\in C_1$ are not adjacent then the set $\{i,j\}\cup\{i_2,\ldots,i_k\}$ is stable and $w( \{i,j\}\cup\{i_2,\ldots,i_k\})> w(\{i,i_2,\ldots,i_k\})=\alpha(G,w)$.  Moreover, the edge $\{i,j\}$ is $w$-critical since both sets $\{i,i_2, \ldots, i_k\}$ and $ \{j,i_2,  \ldots, i_k\}$ are $w$-maximum stable sets. Thus (i) holds and the proof is complete.
\end{proof}

As a direct application we obtain the characterization of the global minimizers of the (unweighted) Motzkin-Straus problem (\ref{motzkin-form}).
\begin{corollary}\label{cor_global_min}
Let $x\in \Delta_n$ with support $S=\supp(x)$ and let $C_1,\ldots,C_k$ denote the connected components of the graph $G[S]$. Then $x$ is a global minimizer of problem (\ref{motzkin-form}) if and only if the following conditions hold:
\begin{description}
\item[(i)]$k=\alpha(G)$,
\item[(ii)] $C_h$ is a clique for all $h\in[k],$
\item[(iii)] $\sum_{i\in C_h} x_i=1/k$ for all $h\in [k]$.
\end{description}
\end{corollary}

As another application we can  characterize when problem (\ref{motzkin-w}) has finitely many minimizers and in addition we show that in this case the sufficient optimality conditions hold at all minimizers.

\begin{proposition}\label{prop-finiteMSw}
Assume $B\in\mathcal M(G,w)$. The following assertions are equivalent.
\begin{description}
\item[(i)] Problem (\ref{motzkin-w}) has finitely many global minimizers.
 \item[(ii)]
$B_{ij}> {1\over 2}\big({1\over w_i}+{1\over w_j}\big)$ for all edges $\{i,j\}\in E$ that are $w$-critical.
\item[(iii)] The optimality conditions (\ref{eqFOOC}), (\ref{eqSCC}) and (\ref{eqSOSC}) hold at all the global minimizers of (\ref{motzkin-w}).
\end{description}
In that case the global minimizers are  the 
vectors $x\in \Delta_n$ with entries $x_i=w_i/\alpha(G,w)$ for $i\in S$ and $x_i=0$ for $i\in V\setminus S$, where $S$ is a $w$-maximum stable set of $G$.
\end{proposition}

\begin{proof}
 We first show (i) $\Longrightarrow$ (ii). For this assume there exists a $w$-critical edge (say) $\{1,2\}\in E$ such that $B_{12}={1\over 2}(1/w_1+1/w_2)$, we show that the number of minimizers is infinite. 
  As $\{1,2\}$ is $w$-critical there exists $R\subseteq V$ such that both sets $R\cup\{1\}$ and $R\cup\{2\}$ are $w$-maximum stable sets. For any scalar $t\in [0,1]$ consider the point $x\in \Delta_n$ with support $S=R\cup\{1,2\}$ and entries $x_1=tw_1/w(S)$, $x_2=(1-t)w_2/w(S)$ and $ x_i=w_i/w(S)$ for $i\in R$. Then, by  Proposition \ref{global_min}, $x$ is a minimizer for all $t\in [0,1]$.
 
 Since the implication (iii) $\Longrightarrow$ (i) is clear, it now suffices to  show  the  implication (ii) $\Longrightarrow$ (iii). 
So assume (ii) holds and consider a global minimizer $u$ of (\ref{motzkin-w}). In view of Proposition \ref{global_min} there exists a $w$-maximum stable set $S$ such that $u_i=w_i/\alpha(G,w)$ for $i\in S$ and $u_i=0$ for $i\in V\setminus S$.
Consider the polynomials  $g_i(x)=x_i$ for $i\in [n]$, $h(x)=\sum_{i=1}^{n}x_i-1$, so that the feasible region of problem (\ref{motzkin-w}) is defined by the constraints $g_i(x)\ge 0$ for $i\in [n]$, and $h(x)=0$.
	The active constraints at $u$ are the constraints $g_i(x)\geq 0$ for $i\notin S$, and $h(x)=0$. Hence $J(u)=V\setminus S$. Clearly, (CQC) holds at $u$ since the gradients of the active constraints at $u$ are the vectors $e$ and $e_i$ for $i\in V\setminus S$, which  are linearly independent (as $S\ne \emptyset $).  Next note that 
	\begin{equation}\label{eqgradfG}
	{\partial p_B\over \partial x_i}(u) =\begin{cases} 
	\frac{2}{\alpha(G,w)} & \text{ if } i\in S, \\
	\frac{2}{\alpha(G,w)}\sum_{j\in N_S(i)} B_{ij} w_j & \text{ if } i \notin S.
	\end{cases}
	\end{equation}
	The first order optimality condition reads 
	$$\nabla p_B(u)=\lambda e + \sum_{i\notin S}\mu_i e_i, \quad
	\text{ where } \lambda \in \oR,\ \mu_i\ge 0 \text{ for } i \in V\setminus S.$$
	Looking at coordinate $i\in S$ we obtain $\lambda=2/\alpha(G,w)$.
	Then we obtain 
	$$\mu_i={2\over\alpha(G,w)} \Big(-1+ \sum_{j\in N_S(i)} w_jB_{ij}\Big)\quad \text{ for each } i\in V\setminus S.$$
Let $i\in V\setminus S$, we show that $\mu_i>0$. For this note that $w_jB_{ij}\ge {w_j\over 2w_i}+{1\over 2}>{1\over 2}$ for all $j\in N_S(i)$. Hence we have $\mu_i>0$ if $|N_S(i)|\ge 2$. So assume now $|N_S(i)|=1$, say $N_S(i)=\{j\}$ so that $\mu_i={2\over \alpha(G,w)}(-1+w_jB_{ij})\ge {1\over \alpha(G,w)}({w_j\over w_i}-1)$.
As $S$ is a $w$-maximum stable set and the set $S\setminus\{j\}\cup \{i\}$ is stable we have $w(S\setminus\{j\}\cup\{i\})\le w(S)$ and thus $w_i\le w_j$.
If $w_j>w_i$ then we have $\mu_i>0$ as desired. So assume now $w_i=w_j$, which implies that the edge $\{i,j\}$ is $w$-critical.
Then, by assumption (ii), we must have   $w_jB_{ij}>{w_j\over 2w_i}+{1\over 2}=1$, which again implies $\mu_i>0$.
So we have shown that  (\ref{eqFOOC}) and (\ref{eqSCC}) hold at $u$.
	Finally, we check that also (\ref{eqSOSC}) holds.  For this consider the Lagrangian function  
	$$L(x)=p_B(x)-\lambda\Big(\sum_{i=1}^n x_i-1\Big)-\sum_{i\in J(u)}\mu_ix_i.$$ 
	Then we have $\nabla^2L(u)=\nabla^2p_B(u)=2B$. Consider a vector $0\neq v\in G(u)^\perp$. Then $v_i=0$ for $i\notin S$, therefore $v^T\nabla^2L(u)v=2\sum_{i\in S} v_i^2>0$ since $v\ne 0$. So (\ref{eqSOSC}) holds at $u$. This concludes the proof.
\end{proof}

Hence, problem (\ref{motzkin-w}) has finitely many minimizers if and only if we choose the matrix $B$  in the set $\mathcal M^*(G,w)$ as defined in (\ref{setBw}).
This is the case, for example, when  $B$ lies in the relative interior of $\mathcal M(G,w)$ as observed in  \cite{GHPR}. Clearly $\mathcal M^*(G,w)=\mathcal M(G,w)$ if there is no $w$-critical edge in $G$. In the unweighted case, one can for instance select $B=I+2A_G\in \mathcal M^*(G,e)$ as perturbation of the adjacency matrix, as already observed earlier, e.g., in \MoL{\cite{Bomze1997,PJ1996}. Recent work, e.g., in  \cite{BRZ2021,HR2019} uses such perturbed (aka regularized) formulations to approximate the maximum stable problem by applying first-order methods. }

\section{\MoL{ Finite convergence and perturbed hierarchies}}\label{sec-convergence-w}

In this section we give a partial positive answer to Conjecture \ref{conj2} and show that the de Klerk-Pasechnik hierarchy $\vartheta^{(r)}(\cdot)$ has finite convergence for the class of acritical graphs. Our approach relies on proving finite convergence for acritical graphs of the (weaker) simplex-based Lasserre hierarchy  $f^{(r)}_G$ in (\ref{eqlassimplex}) corresponding to problem (\ref{motzkin-form}).
We in fact  show a more general  result in the setting of weighted graphs.

\subsection{\MoL{Finite convergence of the Lasserre hierarchy   for the (weighted) Motzkin-Straus formulation}}

In  Section \ref{minimizers-w} we  showed that, if in  the (weighted) Motzkin-Straus problem (\ref{motzkin-w})  we choose the matrix $B$ to lie in the set $\mathcal M^*(G,w)$ from (\ref{setBw}), then there are finitely many minimizers  and the  suffiicient optimality conditions hold at all of them (Proposition \ref{prop-finiteMSw}).  Hence we can then apply Theorem \ref{theo-Nie} and conclude the finite convergence of the corresponding simplex-based Lasserre hierarchy $p_B^{(r)}$ in (\ref{las-simplex-general}) and thus also of the bounds $\Theta_B^{(r)}$ in (\ref{relax-copositive}).

\begin{theorem}\label{theo-finite-w}
Let $(G,w)$ be a weighted graph with positive node weights $w>0$.
Consider problem (\ref{motzkin-w}) where the matrix $B$ belongs to $\mathcal M^*(G,w)$. Then the following holds.
\begin{description}
\item[(i)] $p_B^{(r)}={1\over \alpha(G,w)}$ for some $r\in \oN$.
\item[(ii)] $\Theta_B^{(r)}={1\over \alpha(G,w)}$ for some $r\in \oN$.
\end{description}
In particular, if $G$ has no $w$-critical edge then (i), (ii) hold for any matrix $B\in \mathcal M(G,w)$ and thus for the matrix $B_w$.
\end{theorem}
\begin{proof}
(i) follows  using Theorem \ref{theo-Nie} and Proposition \ref{prop-finiteMSw}). Then (ii) follows from (i) in view of Corollary \ref{cor-link-general}.
\end{proof}

\begin{corollary}\label{finite-acritical-w}
Assume $G$ is a graph with no critical edges. Then the following holds.
\begin{description}
\item[(i)] $f^{(r)}_G = {1\over \alpha(G)}$ for some $r\in \oN$.
\item[(ii)] $\vartheta^{(r)}(G)=\alpha(G)$ for some $r\in \oN$.
\end{description}
Therefore Conjecture \ref{conj2} holds for the class of acritical graphs.
\end{corollary}

\begin{proof}
This is a direct consequence of Theorem \ref{theo-finite-w}, applied to the all-ones node weights $w=e$ and the matrix $B=I+A_G$, in which case we have $f^{(r)}_G= p_B^{(r)}$ and $\vartheta^{(r)}(G)={1\over \Theta_B^{(r)}}$.
\end{proof}


\ignore{
\begin{theorem}\label{theo-finite-las-acritical}
Let $G$ be a graph with no critical edges. Then, $\lsim_{G}^{(r)}=1/\alpha(G)$ for some $r\in \mathbb{N}$.
\end{theorem}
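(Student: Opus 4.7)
The plan is to invoke Nie's finite convergence theorem (Theorem \ref{theo-Nie}) directly, checking its three hypotheses for problem (\ref{motzkin-form}) under the assumption that $G$ is acritical. The constraints defining the simplex $\Delta_n$ are $g_i(x) = x_i \ge 0$ for $i \in [n]$ together with the equality $h(x) = \sum_{i=1}^n x_i - 1 = 0$, and the associated quadratic module is precisely the one appearing in the definition of $f_G^{(r)}$ in (\ref{eqlassimplex}).

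First I would verify the Archimedean condition, which is already essentially recorded in Section \ref{secSQP}: the identities
\[
1 - x_i = \Big(1 - \sum_{k=1}^n x_k\Big) + \sum_{k \ne i} x_k \quad \text{and} \quad 1 - x_i^2 = \tfrac{(1+x_i)^2}{2}(1-x_i) + \tfrac{(1-x_i)^2}{2}(1+x_i)
\]
yield $n - \sum_{i=1}^n x_i^2 \in \mathcal{M}(x_1,\ldots,x_n)_2 + \langle 1 - \sum_i x_i\rangle_4$, so the Archimedean hypothesis of Theorem \ref{theo-Nie} holds with $R^2 = n$.

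Next I would identify the global minimizers using Proposition \ref{global_min}: since $G$ is acritical, the global minimizers of (\ref{motzkin-form}) are exactly the vectors $u = \chi^S/\alpha(G)$ where $S$ ranges over the maximum stable sets of $G$ (in particular there are only finitely many of them). For each such $u$, the condition that $G$ is acritical means, as recalled in the notation section, that $\deg_S(j) \ge 2$ for every $j \in V \setminus S$. This is exactly condition (ii) of Proposition \ref{prop-strict_local_ch}, so the implication (ii) $\Rightarrow$ (iii) of that proposition shows that the constraint qualification (\ref{eqCQC}) (implicit in (\ref{eqFOOC})), the strict complementarity (\ref{eqSCC}) and the second order sufficiency (\ref{eqSOSC}) all hold at $u$.

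With all three hypotheses of Theorem \ref{theo-Nie} verified, its conclusion applies and gives $f_G^{(r)} = 1/\alpha(G)$ for some $r \in \mathbb{N}$. There is really no obstacle here beyond assembling the pieces: the substantive work was done in proving Proposition \ref{global_min} (which restricts the global minimizer set to the stable-set ones under acriticality) and Proposition \ref{prop-strict_local_ch} (which turns the combinatorial acriticality condition into the analytic optimality conditions Nie's theorem requires). The only mild care needed is to note that Nie's theorem is stated for inequality constraints $g_j \ge 0$ and equality constraints $h_i = 0$, which matches our setup exactly after moving $\sum_i x_i - 1$ into the ideal part.
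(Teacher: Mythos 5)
Your proposal is correct and follows essentially the same route as the paper: the paper's own proof also applies Theorem \ref{theo-Nie} after noting that Proposition \ref{global_min} reduces the global minimizers to the finitely many points $\chi^S/\alpha(G)$ and that Proposition \ref{prop-strict_local_ch} (via the acriticality condition $\deg_S(j)\ge 2$) supplies the optimality conditions \eqref{eqFOOC}, \eqref{eqSCC}, \eqref{eqSOSC} at each of them, with the Archimedean condition already verified in Section \ref{secSQP}. No issues.
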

}
\ignore{
\begin{corollary}\label{finite-acritical}
Let $G$ be a graph with no critical edges. Then, $\vartheta^{(r)}(G)=\alpha(G)$ for some $r\in \mathbb{N}$.
\end{corollary}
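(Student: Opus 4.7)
The plan is very short: this corollary should follow essentially for free by combining Theorem~\ref{theo-finite-las-acritical} with the chain of inequalities already established in Corollary~\ref{cor-link-alpha}. Since all the hard work has been done upstream (proving finite convergence of the simplex Lasserre hierarchy for acritical graphs via Nie's theorem, and comparing the various hierarchies), I expect the proof to be a two-line deduction.

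Concretely, first I would invoke Theorem~\ref{theo-finite-las-acritical}: because $G$ is acritical, there exists some integer $s\ge 1$ such that $f^{(s)}_G = 1/\alpha(G)$. Next I would apply Corollary~\ref{cor-link-alpha} with $r$ chosen so that $r+1 = s$, i.e.\ $r = s-1$, which gives
\begin{equation*}
\frac{1}{\alpha(G)} \;\ge\; \frac{1}{\vartheta^{(2r)}(G)} \;\ge\; f^{(r+1)}_G \;=\; \frac{1}{\alpha(G)}.
\end{equation*}
Therefore all inequalities are equalities, yielding $\vartheta^{(2r)}(G) = \alpha(G)$, which proves the claim with the explicit bound $r \mapsto 2(s-1)$.

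I do not anticipate any obstacles: the statement is a genuine corollary in the literal sense, once one has digested the dictionary between the copositivity-based hierarchy $\vartheta^{(r)}(G)$ and the simplex sum-of-squares hierarchy $f^{(r)}_G$ provided by Section~\ref{section-link}. The only tiny care needed is bookkeeping indices, since the link in Corollary~\ref{cor-link-alpha} goes from level $r+1$ on the simplex side to level $2r$ on the copositive side (i.e.\ finite convergence at order $s$ for $f^{(\cdot)}_G$ yields finite convergence at order $2(s-1)$ for $\vartheta^{(\cdot)}(G)$); no additional structural argument about the graph, about minimizers, or about optimality conditions is required here, since all of that has already been absorbed into Theorem~\ref{theo-finite-las-acritical}.
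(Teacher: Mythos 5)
Your proposal is correct and is exactly the paper's argument: the paper's proof of Corollary~\ref{finite-acritical} is the one-liner ``this follows directly from Theorem~\ref{theo-finite-las-acritical} and Corollary~\ref{cor-link-alpha},'' and your index bookkeeping (convergence of $f^{(s)}_G$ yields $\vartheta^{(2(s-1))}(G)=\alpha(G)$) is the right way to make that explicit, with the monotonicity $f^{(r)}_G\le f^{(r+1)}_G$ covering the harmless edge case $s=0$.
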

\begin{proof}
This follows directly from Theorem \ref{theo-finite-las-acritical} and Corollary \ref{cor-link-alpha}.
\end{proof}
}

Hence, for problem (\ref{motzkin-w}), having finitely many global minimizers implies finite convergence of the Lasserre hierarchy. Note, however, that this fact does not extend to general polynomial optimization problems. We illustrate this through two examples, first on an instance of standard quadratic program and second on a well-known instance of polynomial optimization problem over the ball.

\begin{example}
Consider the standard quadratic problem (\ref{quadr-simplex}), asking to find the minimum value $p_{\min}$ of the quadratic $p_M(x)=x^TMx$ over the simplex $\Delta_n$, and the corresponding simplex-based Lasserre bounds $p^{(r)}_M$ in (\ref{las-simplex-general}) and copositive-based bounds $\Theta_M^{(r)}$ in (\ref{relax-copositive}).
In \cite{LV2021} we construct a family of copositive  matrices $M$ with size $n\ge 6$ that satisfy $x^TMx=0$ for a unique vector $x\in \Delta_n$ and with  the additional property that $M$ does not belong to any of the cones $\mathcal K^{(r)}_n$. 
Then we have $p_{\min}=0$ and the polynomial $x^TMx$  has a unique minimizer in $\Delta_n$. On the other hand, we must have $\Theta^{(r)}_M<0$ for all $r\in \oN$, because equality  $\Theta^{(r)}_M=0$ would imply $M\in \MK^{(r)}_n$ (since the optimum is attained in  the program (\ref{relax-copositive})).
By Corollary \ref{cor-link-general} we have $p_{\min}\ge \Theta^{(2r)}_M\ge p_M^{(r+1)}$ for all $r\in \oN$. Hence  strict inequality $p_M^{(r)}<p_{\min}$ holds for all $r \in\oN$ and thus  the Lasserre hierarchy does not have finite convergence.
\end{example}

\begin{example} (See, e.g.,  \cite[Example 6.19]{Laurent2009}). Consider the problem of minimizing a polynomial $p$ over the unit ball in $\oR^n$. Assume  $p$ is  homogeneous, $p(x)>0$ for all $x\in \oR^n\setminus \{0\}$, and $p$ is not a sum of squares of polynomials.  Then the minimum of $p$ over the unit ball  is $p_{\min}=0$ and the origin  is the unique global minimizer. However it is known that  the corresponding Lasserre hierarchy does not have finite convergence, see Example 6.19 in \cite{Laurent2009} for details. The main reason is that a decomposition of the form
$p=s_0+s_1(1-\sum_ix_i^2)$ with $s_0,s_1\in \Sigma$ would imply  $p\in \Sigma$. 
For the polynomial $p$ one may, for instance, consider a perturbation of the Motzkin form:
$p_\epsilon= x_1^4x_2^2+x_1^2x_2^4-3x_1^2x_2^2x_3^2+x_3^6+\epsilon(x_1^6+x_2^6+x_3^6)$, selecting $\epsilon>0$  such that $p_\epsilon\not\in\Sigma$.
\end{example}

\subsection{\MoL{Copositive-based bounds for the (weighted) Motzkin-Straus formulation}}

Let $(G,w)$ be a weighted graph with positive node weights $w>0$. 
As a direct consequence of the weighted Motzkin-Straus formulation (\ref{motzkin-w}), for any matrix $B\in \mathcal M(G,w)$  we obtain the following copositive  programming formulation
\begin{align}\label{copositive_epsilon}
    \alpha(G,w)=\min\{ t:  tB-J \in \COP_n\}
\end{align}
for the weighted stability number.
Let us write $B=B_w+A$, where $A$ lies in the set $\mathcal N(G)$ from (\ref{eqNG}).
 In analogy to (\ref{eqzetar}) and (\ref{eqthetar}) we can define    the associated linear and semidefinite  bounds 
\begin{align}
 \zeta_{A}^{(r)}(G,w)=\min \{t: t(B_w+A)-J \in \Cr_n\},\label{liner_deK_epsilon}\\
 \vartheta_{A}^{(r)}(G,w)=\min\{ t:  t(B_w+A)-J \in \kr_n\},  \label{sdp_deK_epsilon}
 \end{align}
that satisfy $\alpha(G,w)\le \vartheta^{(r)}_A(G,w)\le \zeta^{(r)}_A(G,w)$ for all $A\in \mathcal N(G)$.
For the zero matrix $A=0$ we may omit the index and simply write $\zeta^{(r)}_0(G,w)=\zeta^{(r)}(G,w)$ and $\vartheta^{(r)}_0(G,w)=\zeta^{(r)}(G,w)$.
In addition, in the unweighted case when $w=e$,   we have 
$\zeta^{(r)}(G,e)=\zeta^{(r)}(G)$ and $\vartheta^{(r)}(G,e)=\vartheta^{(r)}(G)$.
Note also that for any matrix $A$ we have  $\vartheta^{(r)}_A(G,w)={1\over \Theta^{(r)}_{B_w+A}}$, where $\Theta^{(r)}_{B_w+A}$ is as defined in (\ref{relax-copositive}).

From the previous section we know that the hierarchy $\vartheta^{(r)}_A(G,w)$ converges in finitely many steps to $\alpha(G,w)$ when the matrix $B_w+A$ belongs to the set $\mathcal M^*(G,w)$. In general one may ask whether this holds for any choice of $A\in \mathcal N(G)$, 
which would be the weighted analogue of Conjecture \ref{conj2}.
In fact it would suffice to show this for  the case $A=0$, 
which follows from the monotonicity properties of the bounds with respect to the choice of $A$,  shown in the next lemma.

\begin{lemma}\label{lem-mono}
Let $A_1,A_2\in \mathcal N(G)$. If $A_1\ge A_2$ then we have $\zeta^{(r)}_{A_1}(G,w)\le \zeta^{(r)}_{A_2}(G,w)$ and 
$\vartheta^{(r)}_{A_1}(G,w)\le \vartheta^{(r)}_{A_2}(G,w)$ for all $r\in \oN$. In particular we have $\zeta^{(r)}_A(G,w)\le \zeta^{(r)}(G,w)$ and $\vartheta^{(r)}_A\le \vartheta^{(r)}(G,w)$ for all $A\in \mathcal N(G)$.
\end{lemma}

\begin{proof}
Assume $t$ is feasible for $\zeta^{(r)}_{A_2}(G,w)$, i.e., $t(B_w+A_2)-J\in \MC^{(r)}_n$. Then, 
$t(B_w+A_1)-J= t(B_w+A_2)-J + t(A_1-A_2)\in \MC^{(r)}_n$ since the matrix $t(A_1-A_2)$ is entrywise nonnegative and thus belongs to $\MC^{(r)}_n$.
Hence $t$ is feasible for $\zeta^{(r)}_{A_1}(G,w)$, which shows $\zeta^{(r)}_{A_1}(G,w)\le \zeta^{(r)}_{A_2}(G,w)$. The same argument shows $\vartheta^{(r)}_{A_1}(G,w)\le \vartheta^{(r)}_{A_2}(G,w)$ and the last claim  follows since $A\ge 0$ for any $A\in\mathcal N(G)$.
\end{proof}

As we now show, the linear bounds $\zeta^{(r)}_A(G,w)$ in fact do {\em not} depend on the specific choice of the matrix $A$ in $\mathcal N(G)$.


\begin{theorem}\label{theo-linearA}
For all $r\in \mathbb{N}$ and $A\in \mathcal N(G)$, we have $\zeta_{A}^{(r)}(G,w)=\zeta^{(r)}(G,w)$.
\end{theorem}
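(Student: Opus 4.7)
The plan is to rewrite both parameters explicitly as inverses of optima of a single discrete optimization problem and observe that these optima agree. By definition, a matrix $N$ lies in $\mathcal{C}^{(r)}_n$ iff the polynomial $(\sum_i x_i)^r\, x^T N x$ has only nonnegative coefficients. Applied to $N = t(I+A_G)-J$, and extracting via multinomial expansion the coefficient of each monomial $x^\alpha$ with $|\alpha|=r+2$, one obtains --- after clearing the common positive factor $r!/\prod_i \alpha_i!$ --- the single linear constraint
$$t\bigl(F(\alpha) - (r+2)\bigr) \,\geq\, (r+1)(r+2), \qquad F(\alpha) \,:=\, \alpha^T(I+A_G)\alpha \,=\, \sum_i \alpha_i^2 + 2\sum_{\{i,j\}\in E}\alpha_i\alpha_j.$$
Since $F(\alpha)-(r+2)=0$ exactly when $\alpha$ is the indicator of a stable set of size $r+2$, both programs are feasible iff $\alpha(G)\leq r+1$, and this condition is unchanged if we replace $F$ by $F_\epsilon(\alpha):=\alpha^T(I+(1+\epsilon)A_G)\alpha = F(\alpha) + 2\epsilon\sum_{\{i,j\}\in E}\alpha_i\alpha_j$. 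Under feasibility we then get the explicit formulas
$$\zeta^{(r)}(G) \,=\, \frac{(r+1)(r+2)}{\min_\alpha \bigl(F(\alpha)-(r+2)\bigr)}, \qquad \zeta_\epsilon^{(r)}(G) \,=\, \frac{(r+1)(r+2)}{\min_\alpha \bigl(F_\epsilon(\alpha)-(r+2)\bigr)},$$
where the minima range over $\alpha\in\mathbb{N}^n$ with $|\alpha|=r+2$.

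It therefore suffices to prove $\min_\alpha F(\alpha) = \min_\alpha F_\epsilon(\alpha)$. Since $F_\epsilon\geq F$ pointwise, only the inequality $\min F_\epsilon \leq \min F$ requires argument, and for this it is enough to exhibit a minimizer $\alpha^*$ of $F$ whose support is a stable set of $G$ --- then $F_\epsilon(\alpha^*)=F(\alpha^*)$. Such an $\alpha^*$ is produced by an uncrossing argument resting on the key identity $M_{ii}-2M_{ij}+M_{jj} = 1-2+1 = 0$ for $M=I+A_G$ and any edge $\{i,j\}\in E$. Consequently the function $s\mapsto F(\alpha^*+s(e_i-e_j))$ is \emph{affine} in $s$; if $\alpha^*$ is a minimizer with $\alpha^*_i,\alpha^*_j\geq 1$, then feasibility of $s=\pm 1$ together with minimality force the slope to vanish, so $F$ is constant on the whole integer segment. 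Taking $s=\alpha^*_j$ yields a new minimizer with $\alpha_j=0$; the edge $\{i,j\}$ is removed from the support and no new vertex (hence no new edge) is added. Iterating eliminates all edges from the support.

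The only nontrivial ingredient is the affinity of $F$ along the direction $e_i-e_j$ for each edge $\{i,j\}$, which is specific to the unperturbed objective (for $F_\epsilon$ the corresponding quadratic coefficient is $-2\epsilon<0$, which merely reinforces the intuition that $F_\epsilon$-minimizers should have stable support even more readily). The multinomial bookkeeping in the reformulation is routine, and I do not foresee any serious obstacle.
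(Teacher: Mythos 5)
Your proposal is correct and follows essentially the same route as the paper: both reduce membership in $\MC^{(r)}_n$ to the sign of the coefficients $c_\alpha=\alpha^TN\alpha-\alpha^T\mathrm{diag}(N)$, obtain the same closed-form expression $\zeta^{(r)}_\epsilon(G)=(r+1)(r+2)/\bigl(\min_\alpha F_\epsilon(\alpha)-(r+2)\bigr)$, and then show by a weight-shifting (uncrossing) argument along $e_i-e_j$ for an edge $\{i,j\}$ that the minimum is attained at a stably supported $\alpha$, so it does not depend on $\epsilon$. The only cosmetic difference is that you uncross the unperturbed objective and invoke $F_\epsilon\ge F$ pointwise, whereas the paper uncrosses the perturbed one directly; both are sound.
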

\begin{proof}
We only need to show the  inequality $\zeta^{(r)}_{A} (G,w)\ge \zeta^{(r)}(G,w)$.
 For this assume the matrix $t(B_w+A)-J$ belongs to the cone $\MC^{(r)}_n$, we show that also the matrix $tB_w-J$ belongs to $\MC^{(r)}_n$, which implies the desired inequality.
For short, set $B=B_w+A$. By assumption, $tB-J\in \MC^{(r)}_n$, which means that the polynomial 
$(\sum_i x_i)^r x^T(tB-J) x$ has nonnegative coefficients. 
Following \cite{Bomze}, for any matrix $M$  and $r\in\oN$, we have
$$
\Big(\sum_i x_i\Big)^rx^TMx=\sum_{\beta\in I(n,r+2)}\frac{r!}{\beta!}c_\beta x^{2\beta},\quad 
\text{ with } \ c_\beta:=\beta^TM\beta-\beta^T\text{diag}(M),
$$
where 
 $\text{diag}(M)\in \oR^n$ is the vector $(M_{ii})_{i=1}^n$ consisting of the diagonal entries  of $M$.
Hence the polynomial $(\sum_i x_i)^r x^TMx$ has nonnegative coefficients if and only if $c_\beta \geq 0 $ for all $\beta\in I(n,r+2)$. We will now prove that, for the matrix $M=tB-J=t(B_w+A)-J$, the property of having $c_\beta\geq 0$ for all $\beta\in I(n,r+2)$ is in fact independent on the choice of $A\in \mathcal N(G)$.
For this let $\beta\in I(n,r+2)$. Using the fact that $e^T\beta=r+2$ we have
\begin{align*}
    c_\beta&=\beta^T(tB-J)\beta-\beta^T\text{diag}(tB-J) = t(\beta^T B \beta-\beta^T\text{diag}(B_w)) -(r+1)(r+2).
\end{align*}
Therefore, $c_\beta\ge 0$ for all $\beta\in I(n,r+2)$ if and only if  $t \varphi^* \ge (r+1)(r+2)$, where $\varphi^*$ is defined by 
\begin{equation}\label{eqbetaw}
\varphi^*:= \min \{\varphi(\beta) :=\beta^T B \beta-\beta^T\text{diag}(B_w) :  \beta\in I(n,r+2)\}.
\end{equation}
We now show that the optimum value of the program (\ref{eqbetaw}) is attained at some $\beta\in I(n,r+2)$ whose support is a stable set of $G$, using a similar argument as for  Lemma \ref{lem-reduce-edge}. Assume  $\beta^*=(\beta_1^*, \beta_2^*, \dots, \beta_n^*)$ is a minimizer of  problem (\ref{eqbetaw}) with $\beta_1^*, \beta_2^*>0$ for some edge $\{1,2\}\in E$. We show that there exists another minimizer $\beta$ of (\ref{eqbetaw}) of the form 
$\beta=(\beta_1^*+\beta_2^*,0,\beta_3^*,\ldots,\beta_n^*)$ or $(0,\beta_1^*+\beta_2^*,\beta_3^*,\ldots,\beta_n^*)$,
thus with $\beta_1\beta_2=0$. 
For this we consider  problem (\ref{eqbetaw}) restricted to the vectors of the form $(\beta_1^* -\lambda , \beta_2^*+\lambda, \beta_3^*, \dots, \beta_n^*)$  with $\lambda\in \oZ\cap [-\beta_2^*, \beta_1^*]$, which reads
\begin{align}\label{phi-t}
\min_{\lambda\in\oZ\cap [-\beta_2^*, \beta_1^*]} \varphi(\beta_1^* -\lambda , \beta_2^*+\lambda, \beta_3^*, \dots, \beta_n^*).
\end{align}
Observe that the objective value of problem (\ref{phi-t}) takes the form
$$
\varphi(\beta_1^* -\lambda , \beta_2^*+\lambda, \beta_3^*, \dots, \beta_n^*)=\lambda^2(B_{11}+B_{22}-2B_{12}) + c\lambda+d
$$
for some scalars $c,d$, 
and thus it  is concave in $\lambda$. Hence, the minimum value of  (\ref{phi-t})
 is attained at one of the end points of the interval $\oZ\cap [-\beta_2^*, \beta_1^*]$, which shows the desired result.
Repeating this reasoning to any other edge contained in the support of $\beta^*$ we obtain another  minimizer $\beta$ of (\ref{eqbetaw}) whose support is a stable set of $G$.
This shows that  the optimum value of (\ref{eqbetaw})  remains the same when selecting $A=0$. 
Therefore, if the polynomial $(\sum_i x_i)^r x^T(tB-J)x $ has nonnegative coefficients then also the polynomial
$(\sum_i x_i)^r x^T(tB_w-J)x$ has nonnegative coefficients. This concludes the proof.
\end{proof}


In \cite{PVZ2007} it is shown that strict inequality $\alpha(G) < \zeta^{(r)}(G)$ holds for all $r\in \oN$ when $G$ is not a complete graph.
We extend this result to the weighted case and characterize when equality $\zeta^{(r)}(G,w)=\alpha(G,w)$ holds  for some $r\in\oN$.  

\ignore{
\begin{lemma}
\MoL{Consider a graph $(G,w)$ with  positive node weights, ordered (say) as $w_1\ge w_2 \ge \ldots \ge w_n>0$, and define the set $I=\{i\in [n]: w_i=w_1\}$. Assume that equality $\zeta^{(r)}_A(G,w)= \alpha(G,w)$ holds for some $r\in \oN$. Then the following holds. \textcolor{blue}{(The equality $\zeta^{(r)}_A(G,w)=\alpha(G,w)$ holds if and only if (i) holds})
\begin{description}
\item[(i)]  $\alpha(G,w)=w_1$, and thus the $w$-maximum stable sets are the singleton sets $\{i\}$ for $i\in I$.
\item[(ii)] $G[I]$ is a clique and every pair $\{i,j\}$ with $i\in I$ and $j\in V\setminus I$ is an edge.
\item[(iii)] $\alpha(G,w)>\alpha(G[V\setminus I],w).$
\end{description}
In particular, for the all-ones weight $w=e$ we obtain that $G$ is the complete graph (as shown in \cite{PVZ2007}).}
\end{lemma}
}

\begin{lemma}
Consider a graph $(G,w)$ with  positive node weights, ordered (say) as $w_1\ge w_2 \ge \ldots \ge w_n>0$.
 Then, equality $\zeta^{(r)}(G,w)= \alpha(G,w)$ holds for some $r\in \oN$ if and only if $\alpha(G,w)=w_1$.
\end{lemma}
\begin{proof}
Assume  $\zeta^{(r)}(G,w)= \alpha(G,w)$ for some $r\in \oN$. Then  the polynomial $q(x)=(\sum_i x_i)^r  x^T(\alpha(G,w)B_w-J)x^T$ has nonnegative coefficients. Let $S$ be a $w$-maximum stable set and let $u$ be the corresponding minimizer, with entries $u_i=w_i/\alpha(G,w)$ for $i\in S$ and $u_i=0$ otherwise. Then $u$ is a zero of $q(x)$. Pick an index $i\in S$ and consider the coefficient of $q$ for the monomial
$x_i^{r+2}$, which is equal to $-1+\alpha(G,w)/w_i$. Since $q(u)=0$ and $u_i\ne 0$ we must have $\alpha(G,w)=w_i$. This implies that $S=\{i\}$ and thus $w_i=w_1=\alpha(G,w)$.
\\
Assume now $\alpha(G,w)=w_1$; we show $\zeta^{(r)}(G,w)=\alpha(G,w)$, i.e., $M:=\alpha(G,w)B_w-J\in \mathcal C^{(r)}_n$, for some $r\in \oN$.
Note that the set $R=\{i\in V: w_i=w_1\}$ induces a clique in $G$. Then the columns/rows of $M$ indexed by nodes in $R$ are all identical. Since deleting repeated rows/columns preserves membership in  the cone $\mathcal{C}^{(r)}$ we can assume without loss of generality that  $R=\{1\}$.
Hence, $\{1\}$ is the only $w$-maximum stable set and  the polynomial $p_M(x)=x^TMx$ has a unique zero in the simplex, located at the corner $e_1$. Note also   $M_{1j}=(w_1/w_j-1)/2>0$ for all $j\in V\setminus \{1\}$. Hence we may apply Corollary 2 in \cite{CPR} and conclude that 
there exists an $r\in\mathbb{N}$ for which the polynomial $(\sum_{i=1}^{n}x_i)^rx^TMx$ has nonnegative coefficients, so that $M\in \mathcal{C}_n^{(r)}$. \end{proof}


In Theorem \ref{theo-linearA} we saw that the linear hierarchy $\zeta^{(r)}_A(G,w)$ does not depend on the choice of $A\in \mathcal N(G)$. For the 
semidefinite hierarchy  $\vartheta_A^{(r)}(G,w)$ we can  prove this property only for  the first level of the hierarchy. 

\begin{lemma}
For any  $A\in \mathcal N(G)$ and node weights $w>0$ we have $\vartheta_A^{(0)}(G,w)=\vartheta^{(0)}(G,w)$ and thus, in particular,
 $\vartheta^{(0)}_A(G)=\vartheta^{(0)}(G)$.
\end{lemma}

\begin{proof}
We need to show the inequality $\vartheta^{(0)}(G,w)\le \vartheta^{(0)}_A(G,w)$ (since the reverse one is clear). For this 
 let $t$ be feasible for $\vartheta_A^{(0)}(G,w)$, we show that $t$ is also feasible for $\vartheta^{(r)}(G)$. 
Set $B=B_w+A$.  By assumption,  the matrix $tB-J$ belongs to $ \mathcal{K}^{(0)}$, 
i.e., there exists a matrix $P\succeq 0$ such that $\text{diag}(P)=\text{diag}(tB-J)$  and $P\leq tB-J$
(recall the characterization of $\mathcal K^{(0)}$ in Proposition \ref{prop-K0}).
As $\text{diag}(tB-J)=\text{diag}(tB_w-J)$ and both $B$ and $B_w$ have zero entries at positions corresponding to non-edges it suffices to check that, for any edge $\{i,j\}\in E$, 
$P_{ij}\le (tB_w-J)_{ij}$. This follows directly from the fact $2P_{ij}\le P_{ii}+P_{jj} = (tB_w-J)_{ii}+ (tB_w-J)_{jj}= 2 (tB_w-J)_{ij}$, where the first inequality holds since $P\succeq 0$.
\end{proof}

\begin{question}
Given a weighted graph $(G,w)$ with positive node weights $w>0$, is it true that,  for  any $A\in\mathcal N(G)$ and any $r\in \mathbb{N}$, 
we have $\vartheta_A^{(r)}(G,w)=\vartheta^{(r)}(G,w)$? 
\end{question}
Clearly, a positive answer to this question for the all-ones node weights $w=e$ would imply the finite convergence of the hierarchy $\vartheta^{(r)}(G)$ and thus settle Conjecture \ref{conj2}. In fact a positive answer to the following question would also suffice to settle Conjecture \ref{conj2}.

\begin{question}
Given a graph $G$ is  it true that   there exists a matrix $A\in \mathcal N(G)$ such that $I+A\in\mathcal M^*(G,e)$ (i.e., $A_{ij}>0$ for all critical edges $\{i,j\}\in E$) and 
$\vartheta_A^{(r)}(G)=\vartheta^{(r)}(G)$ for all $r\in \oN$? 
\end{question}


\section{Complexity of deciding finiteness of the global minimizers}\label{sec-complexity}

As we saw earlier, having finitely many minimizers is a property which plays an important role in the study of finite convergence of Lasserre hierarchy for polynomial optimization.
This raises the question of understanding the complexity of deciding  whether a polynomial optimization problem has finitely many minimizers. Here, as a byproduct of our results in the previous sections about global minimizers of standard quadratic programs,  we show that unless P=NP there is no polynomial-time algorithm to decide whether a standard quadratic program 
has finitely many global minimizers. The complexity of several other decision problems about \MoL{minimizers} in polynomial optimization has been studied rencently in \cite{AZ2020b,AZ2020a}. In particular,  Ahmadi and Zhang  \cite{AZ2020a}  show that it is strongly NP-hard to decide whether a  polynomial of degree 4 has a local minimizer over $\oR^n$; they also show that the same holds for deciding if    a quadratic polynomial has a local minimizer (or a strict local minimizer) over a polyhedron.
In addition they show  that unless P=NP there cannot be a polynomial-time algorithm that finds a point within Euclidean distance $c^n$ (for any constant $c\ge 0)$ of a local minimizer of an $n$-variate quadratic polynomial over a polytope.

In this section we consider the following problem:\medskip

\noindent
\textbf{FINITE-MIN:} Given an instance of   problem  \eqref{poly-opt}, does it  have finitely many global minimizers? 

\medskip
Consider first the case  when (P) is a linear optimization problem, i.e., when the objective and the constraints  are linear polynomials. Then   the problem is convex and thus, if $x_1$ and $x_2$ are two distinct global minimizers then, for every $0\leq t\leq 1$, the point $z=tx_1+(1-t)x_2$ is also a global minimizer. Hence the problem  has finitely many minimizers if and only if it has a unique one.   Therefore, the problem of deciding whether a linear program has finitely many global minimizers is equivalent to the problem of deciding whether it has a unique optimal solution and a polynomial-time algorithm for this problem was given by Appa  \cite{Appa}.

In the rest of the section we prove that  problem {FINITE-MIN}  is hard
even when restricting to standard quadratic programs.
For this, we first consider the following combinatorial problems,  which we will  use to prove this hardness result.  Recall that given a graph $G=(V,E)$, an edge $e\in E$ is critical if $\alpha(G\setminus e)=\alpha(G)+1$.

\textbf{CRITICAL-EDGE:} Given a graph $G=(V,E)$ and an edge $e\in E$, is $e$ a critical edge of $G$?

\textbf{STABLE-SET:} Given a graph $G$ and $k\in \mathbb{N}$, does $\alpha(G)\geq k$ hold?

The problem STABLE-SET is well-known to be NP-Complete \cite{Karp}. 
From this we now prove that unless P=NP there is no polynomial-time algorithm to decide whether an edge is critical.

\begin{theorem}\label{theo-find-critical}
If there is a polynomial-time algorithm that solves the problem {\rm CRITICAL-EDGE}, then P=NP.
\end{theorem}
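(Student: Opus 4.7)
The plan is to give a polynomial-time many-one reduction from \textbf{STABLE-SET} (NP-complete by \cite{Karp}) to \textbf{CRITICAL-EDGE}. Given an instance $(G=(V,E), k)$ of \textbf{STABLE-SET}, I will construct a graph $G'$ together with a distinguished edge $e$ of $G'$ such that $e$ is critical in $G'$ if and only if $\alpha(G) \geq k$. A polynomial-time algorithm for \textbf{CRITICAL-EDGE} would then yield one for \textbf{STABLE-SET}, forcing P=NP.

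The gadget I propose adds $k+2$ fresh vertices $u, v, a_1, \ldots, a_k$ and sets $V' = V \cup \{u, v, a_1, \ldots, a_k\}$. The edge set $E'$ consists of all edges of $G$, the designated edge $e = \{u, v\}$, the edges $\{v, a_i\}$ for every $i\in [k]$, and the edges $\{a_i, x\}$ for every $i\in [k]$ and every $x \in V$. Structurally: $u$ has $v$ as its unique neighbour; the $a_i$'s are pairwise non-adjacent and each universal to $V$; and $v$ is adjacent to $u$ and to all the $a_i$'s. No further edges are added.

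To verify correctness I would compute $\alpha(G')$ and $\alpha(G' \setminus e)$ by case distinction on $S \cap \{u, v\}$ for a maximum stable set $S$. The forced constraints---namely that $v \in S$ excludes $u$ together with all $a_i$'s, that any $a_i \in S$ excludes the whole of $V$, and that in $G' \setminus e$ the vertex $u$ becomes isolated---reduce the analysis to a few cases and give
\begin{equation*}
\alpha(G') = 1 + \max\{\alpha(G),\, k\}, \qquad \alpha(G' \setminus e) = \max\big\{1 + \max\{\alpha(G), k\},\ 2 + \alpha(G)\big\}.
\end{equation*}
Consequently $\alpha(G' \setminus e) = \alpha(G') + 1$ holds precisely when $2+\alpha(G) \geq 2 + \max\{\alpha(G), k\}$, that is, precisely when $\alpha(G) \geq k$.

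Since $|V'|$ and $|E'|$ are polynomial in $|V|+k$, the reduction runs in polynomial time, proving the theorem. The only genuinely creative step is designing the gadget so that the two $\alpha$-values differ by exactly one precisely at the \textbf{STABLE-SET} threshold; once $G'$ is set up, the verification is a short book-keeping exercise on stable sets.
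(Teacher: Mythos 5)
Your reduction is correct, but it takes a genuinely different route from the paper. The paper gives a \emph{Turing} reduction: it assumes a polynomial-time oracle for CRITICAL-EDGE, orders the edges $e_1,\ldots,e_m$ of $G$, and for each $i$ queries whether $e_i$ is critical in $G\setminus\{e_1,\ldots,e_{i-1}\}$; counting the affirmative answers recovers $\alpha(G)=n-p$ exactly, since deleting a critical edge raises $\alpha$ by one and deleting a noncritical edge leaves it unchanged, and the final empty graph has stability number $n$. You instead give a \emph{many-one} (Karp) reduction from STABLE-SET via a gadget, and your bookkeeping checks out: with $u$ pendant on $v$, $v$ otherwise joined only to the $a_i$'s, and each $a_i$ universal to $V$, one gets $\alpha(G')=1+\max\{\alpha(G),k\}$ and $\alpha(G'\setminus e)=\max\{1+k,\,2+\alpha(G)\}$, so $e=\{u,v\}$ is critical iff $\alpha(G)\ge k$ (including the boundary cases $k>n$, where the $a_i$-side dominates and $e$ is correctly noncritical). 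Your approach buys a formally stronger conclusion -- NP-hardness of CRITICAL-EDGE under many-one reductions, a single oracle call rather than $m$ adaptive ones -- at the cost of designing and verifying a gadget; the paper's iterative argument is gadget-free and computes $\alpha(G)$ outright, which is all that is needed for the stated implication. Both establish the theorem. If you write this up, spell out the case analysis for $\alpha(G')$ and $\alpha(G'\setminus e)$ rather than deferring it as book-keeping, since the whole content of the proof lives there.
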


\begin{proof}
 Assume that there exists a polynomial-time algorithm for CRITICAL-EDGE; we show how to use it to solve STABLE-SET. 
For this let $G=([n],E)$ be an instance of STABLE-SET
and order its edges  as $e_1,e_2,\dots, e_m$. Then, for each $i=1,2,\dots, m$, we  check whether the edge  $e_i$ is critical in the graph $G_{i-1}:=G\setminus\{e_1,\dots, e_{i-1}\}$. If the answer is yes then 
we have  $\alpha(G_i)=\alpha(G_{i-1})+1$ and, otherwise,  $\alpha(G_i)=\alpha(G_{i-1})$. 
  After checking all the $m$ edges we end up with the empty graph $G_m$ on  $n$ nodes, with  $\alpha(G_m)=n$. Let $p$ be the number of critical edges that have been encountered while checking all the $m$ edges. Then we have
  $n=\alpha(G_m)=p+\alpha(G)$ and thus $\alpha(G)=n-p$ has been computed. Hence a polynomial-time algorithm for CRITICAL-EDGE implies a polynomial-time algorithm for computing $\alpha(G)$.
\end{proof}

Using this reduction we now prove that the problem of deciding whether a standard quadratic optimization problem has finitely many optimal solutions is hard. For this, given a graph $G=([n],E)$ and a fixed  edge $e\in E$, consider the following standard quadratic program:
\begin{align}\label{motzkin-e}
\MoL{{1\over \alpha(G)}}=    \min \ x^T(I+A_G+ A_{G\setminus{e}})x \ \text{ subject to } \ x\geq 0,\  \sum_{i=1}^{n}x_i=1,
\end{align}
where 
 in the matrix defining the objective function, all edges of $G$ get weight 2, except the selected edge $e$ which keeps weight 1. The fact  that the optimum value of (\ref{motzkin-e}) is equal to $1/\alpha(G)$ \MoL{follows since this is an instance of (\ref{motzkin-form-w}) with $B=B_w+A$, where  $w=e$ is the all-ones weight vector, $B_e=I+A_G$ and $A=A_{G\setminus e}$.}


\MoL{As a direct application of Proposition \ref{prop-finiteMSw} we obtain the following result.}

\begin{corollary}\label{theo-critical-e}
Given a graph $G=(V,E)$ and an edge $e\in E$, problem (\ref{motzkin-e}) has infinitely many global minimizers if and only if $e$ is a critical edge of $G$.
\end{corollary}

\ignore{
\begin{proof}
Say $e$ is the edge $\{1,2\}$. First assume $e$ is a critical edge, we show that \eqref{motzkin-e} has infinitely many optimal solutions.  Since $e$ is critical, there exists $I\subseteq V$ such that both sets $I\cup \{1\}$ and $I\cup\{2\}$ are stable sets of size $\alpha(G)$. Then both vectors $\tilde x=\chi^{I\cup\{1\}}/{\alpha(G)}$ and $\bar x=\chi^{I\cup\{2\}} /{\alpha(G)}$ are optimal solutions of (\ref{motzkin-e}). Now we prove that, for every $0<t<1$, $x=t\tilde x+(1-t)\bar x$ is also an optimal solution. Indeed, $x_i=1/{\alpha(G)}$ for $i\in I$, $x_1=t$, $x_2=1-t$ and $x_j=0$ otherwise, and  the objective value of $x$ is equal to
$$\frac{\alpha(G)-1}{\alpha(G)^2} + t^2 + (1-t)^2+2t(1-t)=\frac{1}{\alpha(G)}.$$
Hence  problem (\ref{motzkin-e}) has infinitely many solutions if $e$ is critical.

Conversely, assume that \eqref{motzkin-e} has infinitely many global minimizers, we show that $e$ is a critical edge. Let $u$ be a global minimizer of (\ref{motzkin-e}) and $S=\supp(u)$, then $u$ is also a global minimizer of the original problem
(\ref{motzkin-form}).
If $S$ is a stable set then, by Lemma \ref{lem-stable_then_ones},  $S$ has size $\alpha(G)$ and $u=\chi^S/\alpha(G)$ (since $u$ is a global minimizer of (\ref{motzkin-form})).
On the other hand,  if $S$ is not stable then, in view of  Lemma~\ref{lem-global_edge}, we know that the only edge that can be contained in $S$ is the edge $e$.
As we assume that  (\ref{motzkin-e}) has infinitely many global minimizers, at least one of them (say $u$) has its support $S$ which contains  the edge $e$.
From this, we will now show that the edge $e$ is critical.  Note that the  matrix $I+A_G+ A_{G\setminus \{e\}}$ is  of the form (\ref{eqM}).
Hence, by Lemma \ref{lem-reduce-edge}, we know that both  points $\tilde{u}=u+u_2e_1-u_2e_2$ and $\overline{u}=u-u_1e_1+u_1e_2$ are optimal solutions of (\ref{motzkin-e}). Moreover,  $\supp(\tilde{u})=S\setminus \{2\}$ and $\supp(\overline{u})=S\setminus \{1\}$ are stable sets, since $\{1,2\}$ is the only edge contained in $S$. Therefore, as we just argued above, $|S\setminus\{1\}|=\alpha(G)$, which shows that  the edge $e$ is critical.
\end{proof}
}

Combining Theorem \ref{theo-find-critical} and Corollary \ref{theo-critical-e} gives directly the following hardness result.

\begin{corollary}\label{cor-hard}
If there is a polynomial-time algorithm to decide whether a standard quadratic program  has finitely many global minimizers then P=NP.
\end{corollary}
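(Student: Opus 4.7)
The plan is to assemble a direct polynomial-time reduction chain
\[
\text{STABLE-SET}\;\longrightarrow\;\text{CRITICAL-EDGE}\;\longrightarrow\;\text{FINITE-MIN for standard quadratic programs},
\]
using the two reductions already established in this section. All of the work has in fact been done; the corollary is a one-line assembly.

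First, I would recall the input: an instance of FINITE-MIN for standard quadratic programs is specified by a symmetric matrix $M\in\mathcal S^n$, and the question is whether $\min\{x^TMx:x\in\Delta_n\}$ has finitely many optimal solutions. Given $(G,e)$ with $G=(V=[n],E)$ and $e\in E$, I would form the matrix $M=I+A_G+A_{G\setminus e}$, which can be written down in time polynomial in the size of $G$. This matrix defines precisely the standard quadratic program \eqref{motzkin-e}.

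Next I would invoke the theorem just proved: problem \eqref{motzkin-e} has infinitely many global minimizers if and only if $e$ is a critical edge of $G$. Consequently, any algorithm that decides FINITE-MIN on standard quadratic programs in polynomial time immediately yields a polynomial-time algorithm for CRITICAL-EDGE, by applying it to the SQP built from $(G,e)$. Combining this with the previous theorem — which shows that a polynomial-time algorithm for CRITICAL-EDGE would solve STABLE-SET in polynomial time via iterative edge deletions — we would obtain a polynomial-time algorithm for the NP-complete problem STABLE-SET, forcing P=NP.

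There is no real obstacle here: the only things to check are (i) that the reduction is polynomial (immediate, as $M$ has $O(n^2)$ entries each computable from $A_G$ in constant time), and (ii) that the reduction is correct as a decision reduction, which is exactly the content of the preceding theorem. So the proof will consist of a single short paragraph performing this reduction and citing the two preceding results.
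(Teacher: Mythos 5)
Your proposal is correct and follows exactly the reduction chain the paper intends: the corollary is an immediate consequence of the theorem showing that \eqref{motzkin-e} has infinitely many global minimizers precisely when $e$ is critical, combined with the earlier theorem reducing STABLE-SET to CRITICAL-EDGE. The paper in fact leaves this assembly implicit, so your one-paragraph write-up matches (and slightly expands) the intended argument.
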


\section{Concluding remarks}

We have shown finite convergence of the de Klerk-Pasechnik hierarchy $\vartheta^{(r)}(G)$ for the class of acritical graphs by relating it to the sum-of-squares hierarchy  (\ref{eqlassimplex})  for the Motzkin-Straus formulation of $\alpha(G)$; moreover we showed this for the general setting of the weighted stable set problem. Proving finite convergence for all graphs remains wide open. In fact, as we have observed, it would be sufficient to show finite convergence   for the class of critical graphs.
The hierarchy (\ref{eqlassimplex}) however is weaker than the sum-of-squares hierarchy (\ref{eqlaspo}) based on using the preordering (generated by the polynomials defining the simplex $\Delta_n$), which we have shown to be equivalent to the hierarchy $\vartheta^{(r)}(G)$. A possible approach to solve Conjecture \ref{conj2} could therefore be to fully exploit this \MoL{additional} real algebraic structure. Another approach could be to use the perturbed sum-of-squares hierarchies that we have introduced and for which we could show finite convergence; such a strategy would require to be able to show degree bounds on the level of finite convergence that do not depend on the perturbation parameter.

Showing the stronger Conjecture \ref{conj1}, which asks whether $\rrank(G)\le \alpha(G)-1$,  seems even more challenging. The resolution in \cite{GL2007} for graphs with small stability number $\alpha(G)\le 8$ required  technically involved arguments. It is likely that the full resolution will need a new set of dedicated tools.  As pointed out  in \cite{GL2007}, one of the difficulties lies in understanding the behaviour of the $\rrank$ 
 under the operation of adding isolated nodes.  We will further investigate this question in the follow-up work \cite{LV2021}. 
 \ignore{
 \textcolor{red}{Given a graph  $G=(V,E)$ and a node $i\in V$, let $G_i$ denote the graph obtained from $G$ by deleting the closed neighbourhood $N(i)\cup\{i\}$ and let $\widehat G_i=G_i\oplus i$ be obtained by adding back node $i$ as an isolated node to $G_i$, so that  $\alpha(G_i)\le \alpha(G)-1$ and $\alpha(\widehat G_i)=\alpha(G_i)+1\le \alpha(G)$. One can verify that 
 \begin{equation}\label{eqinduction}
 \rrank(G)\le 1+ \max\{\rrank(\widehat G_i): i\in V\}
 \end{equation}
  (see \cite{GL2007}). Hence, if adding an isolated node would not increase the rank, then  Conjecture \ref{conj1} would follow  using induction. Indeed one would have $\rrank(\widehat G_i)\le \rrank(G_i)\le \alpha(G_i)-1 \le \alpha (G)-2$, which, using (\ref{eqinduction}), would imply  $\rrank(G)\le \alpha(G)-1$, thus settling Conjecture \ref{conj1}. However,  it is not true in general that adding isolated nodes does not increase the rank. Indeed,  one can show that adding eight isolated nodes to $C_5$ produces a graph $G$ with rank 1, but adding one more isolated node to $G$ gives a graph with rank at least 2  \cite{LV2021}. 
}}

While we could characterize the graphs for which the first level of the sum-of-squares hierarchy (\ref{eqlassimplex}) (at order $r=1$) is exact, the analogous question for the first level of the pre-ordering based hierarchy (\ref{eqlaspo}) is much more difficult. This is equivalent   to 
 understanding which graphs have $\rrank$ 0, a question which we will investigate in \cite{LV2021} and where critical edges also play a crucial role. 

\subsubsection*{Acknowledgements}
\MoL{We thank the two referees for their useful feedback on an earlier version of the paper that helped us improve the presentation. In particular we thank a referee for his/her pointers to the literature and for his/her insightful questions that led us to present some of the results in the general setting of the weighted stable set problem.}

\end{document}